\documentclass[11pt,reqno]{amsart}

\usepackage[T1]{fontenc}
\usepackage[utf8]{inputenc}
\usepackage{lmodern}
\usepackage[margin=1in]{geometry}
\usepackage{microtype}
\usepackage{amsmath,amssymb,amsthm,mathtools,mathrsfs}
\usepackage{bm}
\usepackage{enumitem}
\usepackage{hyperref}
\usepackage[nameinlink,noabbrev]{cleveref}

\usepackage{biblatex} 
\newtheorem{theorem}{Theorem}[section]
\newtheorem{lemma}[theorem]{Lemma}
\newtheorem{corollary}[theorem]{Corollary}
\theoremstyle{remark}
\newtheorem{remark}[theorem]{Remark}

\numberwithin{equation}{section}
\title[Stochastic heat equation with Hölder diffusion coefficient]{Stochastic heat equation with nondegenerate Hölder diffusion coefficient: uniqueness below the three-fourth threshold}
\author{Yi Han}
\address{Department of Mathematics, Massachusetts Institute of Technology, Cambridge, MA
}
\email{hanyi16@mit.edu}
\date{}

\begin{document}

\begin{abstract}
    We consider stochastic heat equation (SHE) defined on 1-d torus $\mathbb{T}$ of the form 
    $$
\partial_t u=\Delta u+g(u)\dot{W},
    $$where $\dot{W}$ is a space-time white noise and $g$ is a real-valued function which is uniformly elliptic (i.e., $|g|$ is uniformly bounded away from 0), and is globally $\beta$-Hölder continuous for some $\beta\in(0,1)$. We prove that weak uniqueness holds as long as $\beta>\frac{2}{3}$. The same uniqueness holds for vector-valued solutions where the coefficient $G$ has the same dimension as the white noise. Previously, uniqueness of solutions to the SHE with Hölder diffusion coefficient was only established for $\beta>\frac{3}{4}$ via a Yamada–Watanabe argument by Mytnik and Perkins \cite{mytnik2011pathwise} without assuming $g$ is nonzero. And when $\beta<\frac{3}{4}$, Mueller, Mytnik and Perkins \cite{mueller2014nonuniqueness} constructed a  non-unique SPDE example satisfying $g(0)=0$. A later generalized coupling argument for nondegenerate $g$ also stopped at the same threshold $\frac{3}{4}$. Our result shows that uniform ellipticity of $g$ restores uniqueness to SHEs in the Hölder regime where the same SHE with non-elliptic $g$ and the same Hölder regularity are often non-unique in law. This constitutes the first general class of SHE weak uniqueness results in the $\beta\in(\frac{2}{3},\frac{3}{4}]$ regime.
\end{abstract}
\maketitle

\section{Introduction}

Let \(\mathbb T\) be the unit torus in one dimension. Consider the following stochastic heat equation (SHE)

\begin{equation}
\partial_tu=\Delta u+g(u)\,\dot W,
\qquad u(0)=u_0\in L^2(\mathbb{T}),
\label{1.1}
\end{equation}
where $g:\mathbb{R}\to\mathbb{R}$ is a real-valued function. For any $\beta\in(0,1)$ we define $[g]_{C^\beta}$ via
$$
[g]_{C^\beta}=\sup_{x\neq y\in\mathbb{R}}\frac{|g(x)-g(y)|}{|x-y|^\beta}
.$$

The main result of the paper is the following theorem:
\begin{theorem}
\label{theorem1.11}
Assume that the diffusion coefficient $g$ satisfies, for a constant  $c_g>0$:
\begin{equation}\label{1.2}
g\in C^\beta(\mathbb R),
\qquad
\frac23<\beta<1,
\qquad
0<c_g\le |g(z)|\quad \forall z\in\mathbb{R},
\end{equation}
where $C^\beta(\mathbb{R})$ is the space of real-valued continuous functions $g$ that satisfy $[g]_{C^\beta}<\infty$. Then for every deterministic \(u_0\in L^2(\mathbb T)\), weak uniqueness
holds for \eqref{1.1}.  More precisely, on every finite time interval $[0,T]$, any two
weak mild solutions to \eqref{1.1} have the same law on
\(C([0,T];L^2(\mathbb T))\). 
\end{theorem}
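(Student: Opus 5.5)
We plan to prove weak uniqueness through a generalized coupling argument combined with a Girsanov change of measure. This is where uniform ellipticity of $g$ enters. Fix two weak mild solutions $u^1,u^2$ with the same initial datum $u_0$, and take $u^1$ driven by $W$ on some probability space. On the same space we build a controlled process
\[
\partial_t \tilde u=\Delta\tilde u+g(\tilde u)\dot W+\lambda\,g(\tilde u)\,h\,dt ,
\]
where the control $h$ pulls $\tilde u$ toward $u^1$. Because $|g|\ge c_g$, any drift of the form $g(\tilde u)\,h$ can be absorbed into the noise by Girsanov: under the tilted measure, $\dot W+\lambda h$ is a white noise. It therefore suffices to show that, after suitable localization, $\mathbb E\exp\bigl(C\int_0^T\|h_s\|_{L^2}^2ds\bigr)<\infty$ for the relevant control. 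Then the law of $\tilde u$ under the new measure equals the law of an arbitrary weak solution, here $u^2$. The generalized-coupling criterion (Kulik--Scheutzow type) then says that if $\tilde u$ and $u^1$ coincide, or become arbitrarily close, with a Girsanov density of controlled entropy, the two laws agree. We reduce this to a uniform-in-$\varepsilon$ estimate on a family of couplings.

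The key steps are as follows. (i) Establish a priori regularity for any weak mild solution: spatial Hölder regularity $C^{1/2-}$ and temporal Hölder regularity $C^{1/4-}$, with moment bounds, using only boundedness-type growth of $g$ and the factorization method. (ii) Choose the control. The naive choice $h=(u^1-\tilde u)/g(\tilde u)$ fails because of the noise mismatch $(g(u^1)-g(\tilde u))\dot W$. Instead, we take a smoothed and rescaled version,
\[
h=\varepsilon^{-\gamma}\,P_{\varepsilon}(u^1-\tilde u)/g(\tilde u),
\]
and work with the difference $v=u^1-\tilde u$. This difference solves a heat equation with a strong damping $-\lambda v$ and a multiplicative forcing $(g(u^1)-g(\tilde u))\dot W$, whose coefficient is bounded by $[g]_{C^\beta}|v|^\beta$. (iii) Run a bootstrap on the size of $v$. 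If $|v|\lesssim \delta$ on a stopping-time interval, the stochastic convolution against a damped heat kernel with rate $\lambda$ has size about $\delta^\beta\lambda^{-1/4}$. Closing the estimate $\delta^\beta\lambda^{-1/4}\le\delta$ requires $\lambda\gtrsim\delta^{-4(1-\beta)}$. The Girsanov cost is then about $\lambda^2\delta^2 T$, and we need it to stay bounded as $\delta\downarrow0$ along a multiscale sequence.

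(iv) Combine scales. We iterate over dyadic levels $\delta_k=2^{-k}$ and spend time $t_k$ at level $k$, where the local equilibration time is about $\lambda_k^{-1}$. The total cost is then $\sum_k \lambda_k^2\delta_k^2\lambda_k^{-1}=\sum_k\lambda_k\delta_k^2$, which is of order $\sum_k\delta_k^{2-4(1-\beta)}$. This sum is finite exactly when $\beta>1/2$, but the spatial roughness of $v$ worsens the estimate. The improvement from $3/4$ to $2/3$ must come from exploiting the fact that $v$ is smoother than $u$: $v$ is $C^{1/2+\beta/2-}$ in space, because the noise coefficient vanishes where $v$ does. Feeding this gain back into the stochastic convolution estimate, together with the $L^2$-based Girsanov cost, should yield summability precisely when $\beta>2/3$.

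The main obstacle is step (iv). The difficulty is obtaining sharp pathwise estimates for the damped stochastic convolution with a non-Lipschitz, $v$-dependent coefficient, uniformly over the stopping times, while keeping the exponential moments needed for Novikov's condition. The first thing we would try is a localized Kolmogorov/Garsia estimate on $v$ at scale $\lambda^{-1/2}$ in space and $\lambda^{-1}$ in time. After that, we would finish with a Borel--Cantelli argument over the levels, showing that the coupling succeeds with probability tending to $1$. Equality of finite-dimensional laws then follows, and hence equality of laws on $C([0,T];L^2(\mathbb T))$.
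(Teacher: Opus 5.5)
Your proposal has a genuine gap at step (iv), which is where the whole content of the theorem lies.

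What you describe in (ii)--(iii) is the generalized-coupling method of \cite{han2025exponential}: a strong drift pulls a controlled copy toward the candidate, and ellipticity is used only so that Girsanov can absorb the drift. For this equation that method closes only for $\beta>3/4$. Your own scaling heuristic gives $\beta>1/2$. You then say spatial roughness ``worsens'' it without quantifying by how much, and assert that a regularity gain ``should yield summability precisely when $\beta>2/3$''. No computation produces that number.

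The gain itself is not available as stated. Because $|g(u^1)-g(\tilde u)|\lesssim|v|^\beta$, the stochastic convolution is better than $C^{1/2-}$ only near the zero set of $v$, not globally. One step of the Mytnik--Perkins bootstrap gives $\tfrac12+\tfrac\beta2$ there, and that local mechanism is exactly what produces the $3/4$ threshold in \cite{mytnik2011pathwise}. Moreover, a Girsanov cost $\int_0^T\|h_s\|_2^2\,ds$ is a sign-blind $L^2(\mathbb T)$ quantity, so it cannot exploit pointwise gains of this kind.

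The paper reaches $2/3$ by a different mechanism. It splits $g(u)\,dW=m[g]\,dW^0+h(u)\,dW^1$ and \emph{reflects} only the nondegenerate part $W^0$ along $D=U-V^n$. It then applies It\^o's formula to the concave function $(\|D\|_{H^{-s}}^2+\delta^2)^{p/2}$ in $H^{-s}$, with $s\in(1/2,2-1/\beta)$, where the white-noise coefficients are Hilbert--Schmidt.

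\begin{itemize}
\item \textbf{Inner region} ($\|D\|_{H^{-s}}\lesssim\delta$). Three ingredients are combined: the trace bound $\|h(U)-h_n(V^n)\|_2^2\lesssim\|D\|_2^{2\beta}+\varepsilon_n^2$; the interpolation $\|D\|_2^2\le\|D\|_{H^{-s}}^{2(1-s)}\|D\|_{H^{1-s}}^{2s}$; and Young's inequality against the dissipation $-p(\|D\|_{H^{-s}}^2+\delta^2)^{p/2-1}\|D\|_{H^{1-s}}^2$. What remains is an error $\delta^{p-2+2q}$ with $q=\beta(1-s)/(1-\beta s)$. This is small for some $p<1$ exactly when $q>1/2$, i.e.\ when $\beta>2/3$.
\item \textbf{Outer region} (larger $\|D\|_{H^{-s}}$). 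Write $r_s=\|D\|_{H^{-s}}$ and $y_s=\|D\|_{H^{1-s}}^2/r_s^2$. The reflected radial noise gives a \emph{negative} second-order It\^o term $\le -c\,r_s^{p-2}y_s^{-s}$. Together with the dissipation, it absorbs the H\"older term $\lesssim r_s^{p+2\beta-2}y_s^{\beta s}$, because $s<\beta$.
\end{itemize}

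Nothing in your scheme plays the role of this sign-sensitive term or of the $H^{-s}$/$H^{1-s}$ interpolation. Without such a replacement, step (iv) remains a conjecture.

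There is also a logical gap in your setup. Girsanov only shows that $\tilde u$ under the tilted measure is \emph{some} weak solution. Identifying its law with that of $u^2$ presupposes the uniqueness being proved. You need a reference law that is canonical. The paper compares each candidate $U$ with the strong solutions $V^n$ of the Lipschitz equations with $g_n\to g$. Their law $P_n$ does not depend on $U$, and the reflection is built to preserve this marginal. The paper then proves $W_{d_s}(\mathrm{Law}\,U(t),\mathrm{Law}\,V^n(t))\to0$ at a rate independent of the candidate. Finally, it upgrades one-time marginals to path laws via the conditional martingale problem and the resulting Markov property.

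Two technical points are also unaddressed. First, step (i) holds only away from $t=0$: $g$ has linear rather than bounded growth, and with $u_0\in L^2(\mathbb T)$ one only has $\|S_tu_0\|_\infty\lesssim t^{-1/4}\|u_0\|_2$. So no uniform sup-norm or H\"older control is available near $t=0$ for your pathwise bootstrap. Second, you do not establish the exponential moments needed for Novikov as $\lambda_k\to\infty$.
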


In this paper, all uniqueness statements are proven for uniqueness in law rather than pathwise uniqueness of SHEs. The current proof is based on law comparison and does not have a natural path to pathwise uniqueness. We do not assume $g$ is bounded from above, but the Hölder assumption $[g]_{C^\beta}<\infty$ on $g$ means that $g$ has at most linear growth. For example, we can take $g(x)=1+|x|^\beta,\quad\beta>\frac{2}{3}$ as the diffusion coefficient for the SPDE \eqref{1.1}.

\subsection{Uniqueness of martingale solutions and SPDE with Hölder continuous noise coefficient}
We now put Theorem \ref{theorem1.11} into the general context of SPDE weak solutions. First we note that the existence of a probabilistic weak solution to \eqref{1.1} follows from a very standard tightness argument (see \cite{gkatarek1994weak}) whenever the coefficients of the SHE are continuous. However, proving uniqueness in law for \eqref{1.1} for non-locally-Lipschitz coefficients $g$ has been a very difficult task in the SPDE context.

Taking a step back, the same uniqueness question can be asked for finite dimensional diffusion processes of the form $dX_t=\sigma(X_t)dW_t$ for any continuous function $\sigma:\mathbb{R}^d\to\mathbb{R}^{d\times d}$ and a $d$-dimensional Brownian motion $W_t$. When $\sigma$ is uniformly elliptic and in the one dimensional setting, the solution $X_t$ is already known to be unique in law (see Chapter 6 of \cite{stroock2007multidimensional}) by a time change argument for Brownian motion, and no Hölder continuity of the coefficient $\sigma$ is required. These uniqueness statements for elliptic $\sigma$ can also be proven via the parametrix method (see \cite{AST_2009__327__47_0}) when $\sigma$ is Hölder continuous and multi-dimensional, or via elliptic PDE methods including the finite-dimensional Schauder theory. However, none of these methods admit a complete counterpart for infinite dimensional SPDEs. When $\sigma$ is allowed to vanish somewhere, the uniqueness picture changes completely. Take $d=1$, then the Yamada–Watanabe pathwise uniqueness criterion \cite{yamada1971uniqueness} shows uniqueness for $\beta\ge \frac{1}{2}$-Hölder continuous $\sigma$ and shows nonuniqueness for $\beta<\frac{1}{2}$.

For infinite dimensional SHE driven by white noise and merely Hölder continuous diffusion coefficient $g$, some of the major developments to date can be summarized as follows:

\begin{enumerate}
    \item Zambotti \cite{zambotti2000analytic} proved uniqueness in law for a class of martingale problems where the diffusion coefficient $g$ is a trace-class Hölder perturbation of a fixed Ornstein-Uhlenbeck covariance. The proof uses Schauder estimates but does not cover the pointwise coefficient SHE \eqref{1.1}.
    \item Mytnik and Perkins \cite{mytnik2011pathwise} proved pathwise uniqueness for 1-d SHE (on $\mathbb{R}$) for $\beta$-Hölder continuous coefficients $g$ whenever $\beta>\frac{3}{4}$ via a Yamada-Watanabe approach. No ellipticity assumption on $g$ was imposed.
    \item Mueller, Mytnik and Perkins \cite{mueller2014nonuniqueness} showed that, in the same context as \cite{mytnik2011pathwise} but when $\beta<\frac{3}{4}$, uniqueness in law no longer holds for general $g$. Their counterexample takes $g(x)=|x|^\gamma$, so that $g$ is not elliptic. Thus for uniformly elliptic coefficient $g$, the uniqueness of \eqref{1.1} is not disproved for $\beta<\frac{3}{4}$.
    \item Bass and Perkins \cite{bass2012uniqueness} proved uniqueness for a class of martingale problems where $g$ is elliptic and $\frac{1}{2}+\epsilon$-Hölder continuous in an enhanced Hölder norm. Their assumption was that the Hölder constant of $g$ should decay much faster when tested against Fourier modes of higher frequencies, but the standard coefficient model \eqref{1.1} does not satisfy this fast decay property of $g$ at higher modes.
    \item The author \cite{han2025exponential} adapted the generalized coupling method of \cite{kulik2020well} and proved weak uniqueness for a wide class of SPDEs with elliptic diffusion coefficient. When specialized to the 1-d SHE, the paper also gives the $\beta>\frac{3}{4}$ threshold (but requires ellipticity of $g$). This method was generalized to the setting of stochastic wave equations in \cite{han2024stochastic}.
\end{enumerate}

As a historical note, part of the study of SHEs with Hölder $g$ is motivated by the well-known example of a
Dawson--Watanabe super-Brownian motion (see for example \cite{Dawson1993MeasureValued}, \cite{Perkins2002DawsonWatanabe}, \cite{BassPerkins2001SuperBrownianMP}). In its SPDE formulation, this corresponds to a SHE with noise coefficient $\sqrt{u}$. The proof of uniqueness for this model is quite special, as the law is determined by the branching property of an interacting particle system via the associated
log-Laplace functional. Therefore, previous research can be summarized into two main streams: the first stream considers $g$ that can possibly take the value zero, and is partly motivated by this super-Brownian motion example. The second stream considers $g$ that is uniformly elliptic, and is partly motivated by the finite-dimensional uniqueness result for martingale problems with elliptic $g$. This paper lies within the second stream.

As illustrated in the above list, within the second stream of elliptic $g$, previously there was no general result that can go beyond the Hölder regime $\beta>\frac{3}{4}$ which is the critical threshold suggested by known results for non-elliptic $g$. Theorem \ref{theorem1.11} finally proves weak uniqueness in the regime $\frac{3}{4}\geq\beta>\frac{2}{3}$
where ellipticity of $g$ really matters for uniqueness. This is the first general solution theory for SHE in the Hölder regime where ellipticity of $g$ becomes decisive for uniqueness.

\subsection{Generalizations: vector field solutions, drifts, and more}

We then present three generalizations of Theorem \ref{theorem1.11}. First, the uniqueness proof extends to vector-valued solutions without much difficulty:

\begin{theorem}[Finite-dimensional vector-field SHEs]
\label{thm:vector-valued}
Let \(d\geq1\), \(\beta\in(2/3,1)\), and let
\[
    G:\mathbb R^d\to\mathbb R^{d\times d}
\]
be a measurable mapping that satisfies the following two conditions:
\begin{enumerate}
    \item For some constant \(c_G>0\),
\[
    G(z)G(z)^{*}\geq c_G^2 I_d,
    \qquad z\in\mathbb R^d,
\]
\item    For some constant \([G]_{C^\beta}<\infty\),
\begin{equation}
    \|G(z)-G(z')\|_{\mathrm{HS}}
       \leq [G]_{C^\beta}\|z-z'\|^\beta,
    \qquad \forall z,z'\in\mathbb R^d.
    \label{HOLDER}
\end{equation}
\end{enumerate}

Consider the following vector-valued stochastic heat equation
$$
    \partial_t U
      =\Delta U+G(U)\dot W,
      \qquad
      U(0)=u_0\in L^2(\mathbb T;\mathbb R^d),
$$
where \(W=(W^1,\ldots,W^d)\) is a \(d\)-dimensional
space--time white noise.

Then weak uniqueness holds for the above equation in
$
    C\bigl([0,T];L^2(\mathbb T;\mathbb R^d)\bigr)
$
for every \(T>0\).
\end{theorem}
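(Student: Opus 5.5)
The plan is to run the scalar proof of Theorem \ref{theorem1.11} coordinatewise, checking that each analytic input depends on $g$ only through the lower bound $|g|\ge c_g$ and the seminorm $[g]_{C^\beta}$. In the vector case these become the matrix bounds $GG^*\ge c_G^2 I_d$ and \eqref{HOLDER}. The framework is the generalized coupling / law comparison scheme. Fix two weak mild solutions $U^1,U^2$ with the same initial datum $u_0\in L^2(\mathbb T;\mathbb R^d)$. For a control parameter $\lambda$ and a spectral truncation level $N$, build on one probability space a controlled copy
\[
\partial_t \tilde U=\Delta\tilde U+G(\tilde U)\dot W+\lambda\,\Pi_N(U^1-\tilde U)\,dt,
\]
where $\Pi_N$ projects onto low Fourier modes. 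The drift is then removed by a Girsanov change of measure. Ellipticity enters exactly here: the drift $h$ is rewritten as $G(\tilde U)\,\xi$ with $\xi:=G(\tilde U)^{*}(G(\tilde U)G(\tilde U)^{*})^{-1}h$, a right inverse that is pointwise bounded by $c_G^{-1}|h|$. This replaces division by $g$ and gives an $L^2(\mathbb T;\mathbb R^d)$-valued Girsanov kernel with $\|\xi\|\le c_G^{-1}\|h\|$. The total variation distance between the laws of $\tilde U$ and $U^2$ is then bounded by a Pinsker-type estimate in $\mathbb E\int_0^T\|\xi\|^2\,dt$.

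The key step is the a priori bound on the difference $V=U^1-\tilde U$. This is where $\beta>2/3$ is used in the scalar proof. The ingredients are:
\begin{enumerate}
\item the mild formulation of $V$ with the heat semigroup acting diagonally on $\mathbb R^d$-valued fields;
\item the Itô isometry for the noise term, with integrand $(G(U^1)-G(\tilde U))$, controlled via \eqref{HOLDER} in Hilbert--Schmidt norm by $[G]_{C^\beta}|V|^\beta$;
\item the parabolic regularity estimates (space H\"older $\frac12^-$, time $\frac14^-$) of each solution, which hold for vector solutions because $G$ has linear growth.
\end{enumerate}
The Hilbert--Schmidt norm is exactly what the Itô isometry against a $d$-dimensional white noise produces. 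So every scalar inequality $|g(x)-g(y)|^2\le[g]^2_{C^\beta}|x-y|^{2\beta}$ is replaced verbatim by $\|G(x)-G(y)\|_{\mathrm{HS}}^2\le[G]^2_{C^\beta}\|x-y\|^{2\beta}$. The iterative high/low frequency bootstrap closing at $\beta>2/3$ then goes through with constants depending on $d$.

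The expected main obstacle is any place where the scalar argument used a one-dimensional trick: monotonicity, a sign of $g$, or the explicit inverse $1/g$ (e.g.\ to localize or to compare $g(u)^2$ terms). I would handle it as follows:
\begin{enumerate}
\item Replace $1/g$ by the right inverse above.
\item Replace every estimate on $g^2$ by an estimate on $GG^*$. Note that $\|GG^*(x)-GG^*(y)\|\le(\|G(x)\|+\|G(y)\|)\|G(x)-G(y)\|$, so the same H\"older exponent survives with linear growth weights.
\item Handle the weights by stopping times $\tau_R=\inf\{t:\|U^i(t)\|_{L^\infty}\ge R\}$, as in the scalar case.
\end{enumerate}

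Having shown that the laws of $U^1$ and $U^2$ on $C([0,T];L^2(\mathbb T;\mathbb R^d))$ are arbitrarily close in total variation on $\{\tau_R>T\}$, the argument concludes in three steps. First, send $N,\lambda\to\infty$ in the prescribed order. Second, send $R\to\infty$. Third, apply the Markov/strong Markov extension over time to obtain equality of the laws.
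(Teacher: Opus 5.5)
\textbf{There is a genuine gap: the mechanism you propose does not reach $\beta>2/3$, and the scheme is circular as set up.} Your scheme is the generalized coupling of \cite{han2025exponential}: a controlled copy with drift $\lambda\Pi_N(U^1-\tilde U)$, the Girsanov kernel $\xi=G^*(GG^*)^{-1}h$, and Pinsker. The paper says explicitly that this route closes only for $\beta>3/4$. The Girsanov cost $\mathbb E\int_0^T\|\xi\|_2^2\,dt$ is an $L^2(\mathbb T)$ quantity, while the H\"older mismatch only gives $\|G(U)-G(V)\|_{L^2}^2\lesssim\|U-V\|_2^{2\beta}$. Girsanov also cannot use sign information to do better. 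The ``iterative high/low frequency bootstrap closing at $\beta>2/3$'' that you attribute to the scalar proof does not exist there, and you do not construct it. So the step where the threshold enters is only asserted. With the ingredients you list (It\^o isometry, the HS H\"older bound, parabolic regularity) you would at best recover the vector theorem for $\beta>3/4$.

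The circularity is this. $\tilde U$ is a controlled copy of the $G$-equation itself, so building it on the probability space of $(U^1,W)$ requires strong solvability of a non-Lipschitz SPDE. Moreover, after the change of measure $\tilde U$ is merely \emph{some} weak solution, so identifying its law with that of $U^2$ presupposes the uniqueness you are proving. Both the paper and the generalized coupling literature instead compare with a solution $V^n$ of a Lipschitz approximating equation, whose law $P_n$ is canonical. (A minor point: with $u_0\in L^2$, your $\tau_R$ defined through $\|U^i(t)\|_{L^\infty}$ may equal $0$. No such localization is needed, since the H\"older bound is global.)

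\textbf{The missing ideas are the change of metric to $H^{-s}$ and the reflection.}
\begin{enumerate}
\item In the vector case, first replace $G$ by the symmetric field $P=(GG^*)^{1/2}$, using the orthogonal field $O=P^{-1}G$. Then write $P^2=m[G]^2I_d+R^2$ with $m[G]<c_G/4$.
\item Prove that $R$ is globally $\beta$-H\"older in Hilbert--Schmidt norm (Lemma~\ref{holderlemmas}). This uses Araki--Yamagami for $P$, then Kittaneh's HS bound for $r\mapsto\sqrt{r^2-m[G]^2}$, which is Lipschitz on $[c_G,\infty)$ thanks to ellipticity. This is the genuinely matrix-valued step; your weighted estimate on $GG^*$ does not provide it.
\item Let $V^n$ be driven by $m[G]K_\delta(D)\,dW^0+m[G]S_\delta(D)\,d\overline W^0+M_{R_n(V^n)}\,dW^1$. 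This reflects only the isotropic part, along $D=U-V^n$.
\item Apply It\^o's formula to $(\|D\|_{\mathcal H^{-s}}^2+\delta^2)^{p/2}$ with $\frac12<s<2-\frac1\beta$. This is legitimate by the trace identity \eqref{V.3}.
\item The threshold comes from the inner region (Lemma~\ref{lemma5.1}). The interpolation \eqref{V.5} and Young's inequality trade the H\"older trace $\|D\|_{\mathcal H}^{2\beta}$ against the dissipation $\|D\|_{\mathcal H^{1-s}}^2$ at cost $\delta^{p-2+2q}$, where $q=\beta(1-s)/(1-\beta s)$. This cost vanishes only if $p>2-2q$. That is compatible with the concavity requirement $p<1$ exactly when $q>\frac12$, which is possible for some $s>\frac12$ only when $\beta>\frac23$.
\item In the outer region (Lemma~\ref{lemma6.1}), the reflected noise produces the strictly negative term $\frac12 f_\delta''q_0$. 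Together with the dissipation, it absorbs the H\"older term.
\end{enumerate}
Neither the $H^{-s}$ It\^o calculus nor the sign-sensitive reflection appears in your proposal, and without them the argument does not go below $3/4$.
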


We can also add a globally Lipschitz drift to the SHE:
\begin{corollary}[Globally Lipschitz drift]
\label{cor:lipschitz-drift}
Let $b:\mathbb{R}^d\to\mathbb{R}^d$ be a measurable function. Let $G$ satisfy the assumption in Theorem~\ref{thm:vector-valued}. Assume that $b$ is globally Lipschitz continuous:
\[
    |b(z)-b(z')|
       \leq L_b|z-z'|,
    \qquad\forall z,z'\in\mathbb{R}^d.
\] Then the following SHE with drift
\begin{equation}\label{sheu}\partial_tU
      =\Delta U+b(U)+G(U)\dot W,\quad U(0)=u_0\in L^2(\mathbb{T},\mathbb{R}^d)
\end{equation}
has a unique weak mild solution.
In particular, \(b\) is allowed to have linear growth.
\end{corollary}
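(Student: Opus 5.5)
Since $G$ is uniformly nondegenerate and $b$ is globally Lipschitz, the natural route is to remove the drift by a Girsanov transformation and reduce to Theorem~\ref{thm:vector-valued}. Write the drift as $b(U)=G(U)\,h(U)$ with $h(z):=G(z)^{-1}b(z)$. Because $G(z)G(z)^*\ge c_G^2 I_d$, we have $\|G(z)^{-1}\|_{\mathrm{op}}\le c_G^{-1}$, so $|h(z)|\le c_G^{-1}(|b(0)|+L_b|z|)$. Thus $h$ has at most linear growth and is measurable; no regularity of $h$ is needed. For any weak mild solution $U$ of \eqref{sheu} on $[0,T]$ with noise $W$, I would set
\[
\tilde W(dt,dx)=W(dt,dx)+h(U(t,x))\,dt\,dx .
\]
Then $U$ is formally a mild solution of the drift-free equation driven by $\tilde W$. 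I would make this rigorous by passing through the martingale-problem formulation tested against $\varphi\in C^\infty(\mathbb T;\mathbb R^d)$, which is equivalent to the mild one.

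Existence of a solution of \eqref{sheu} follows from the standard tightness argument (\cite{gkatarek1994weak}). For uniqueness, take two solutions $U^1,U^2$ with the same $u_0$. The key steps are as follows.

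First, I would establish the a priori bound $\sup_{t\le T}\mathbb E\|U^i(t)\|_{L^2}^2<\infty$, using linear growth of $b$ and $G$, the heat-kernel estimate $\int_0^t\|p_{t-s}\|_{L^2}^2ds\lesssim t^{1/2}$, and Gronwall. This gives $\int_0^T\|h(U^i_s)\|_{L^2}^2ds<\infty$ almost surely.

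Second, I would localize with stopping times $\tau_N=\inf\{t:\int_0^t\|h(U_s)\|_{L^2}^2ds\ge N\}$. On $[0,\tau_N]$, Novikov's condition holds trivially. Therefore
\[
Z_t=\exp\Big(-\int_0^t\!\!\int h(U)\cdot W(ds,dx)-\tfrac12\int_0^t\|h(U_s)\|_{L^2}^2ds\Big)
\]
is a true martingale up to $\tau_N$. Under $d\mathbb Q=Z_{\tau_N\wedge T}\,d\mathbb P$, the field $\tilde W$ is a white noise on $[0,\tau_N\wedge T]$, so $U$ solves the drift-free SHE there.

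Third, I would invoke Theorem~\ref{thm:vector-valued}, which is stable under stopping via the usual Stroock–Varadhan localization argument. It shows that the $\mathbb Q$-law of $U_{\cdot\wedge\tau_N}$ coincides with that of the stopped drift-free solution $V$. Here $\tau_N$ is a functional of the path, and the density $Z^{-1}$ can be written as a path functional of $U$, namely $\exp\big(\int h(U)\cdot d\tilde W-\tfrac12\int\|h\|^2\big)$. To express $\int h(U)\cdot d\tilde W$ pathwise, I would use the martingale problem: it equals a stochastic integral against the martingale part $M$ of $U$, via $G^{-1}$, and $M$ is measurable with respect to the path of $U$. Consequently the $\mathbb P$-law of $U_{\cdot\wedge\tau_N}$ is determined, and in particular it is the same for $U^1$ and $U^2$.

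Fourth, I would let $N\to\infty$. Since $\tau_N\to\infty$ almost surely by the first step, the laws agree on $C([0,T];L^2)$.

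I expect the main obstacle to be the rigorous path-functional representation of $\int h(U)\cdot d\tilde W$. In infinite dimensions, reconstructing the noise from $U$ requires $G$ to be invertible pointwise, which holds, together with a measurable-in-path description of the martingale measure. I would handle it by approximating $h(U)$ by simple, cylindrical, smooth test fields $\sum_k \varphi_k(x)\mathbf 1_{(t_k,t_{k+1}]}$ composed with $G^{-1}(U)$. For these, the stochastic integral is an explicit functional of $\langle U,\varphi\rangle$ via the martingale problem, and I would pass to the limit in $L^2$.
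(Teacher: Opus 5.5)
Your Girsanov reduction is a correct route, but it differs from the paper's proof. The paper never changes measure. It adds $b(U)$ and $b(V^n)$ to the two equations of the reflection coupling, so $V^n$ still has the law of the Lipschitz $g_n$-problem with drift. The only new term in the It\^o drift of $f_\delta(r_s)$ is then $w_\delta\langle D,b(U)-b(V^n)\rangle_{H^{-s}}$. The paper bounds it by $L_b w_\delta\|D\|_{H^{-s}}\|D\|_2$ and, via interpolation and Young, absorbs it into the dissipation $-w_\delta\mathcal A_s$ in both the inner and outer regions (Lemmas~\ref{lemma5.1} and~\ref{lemma6.1}); nothing else changes. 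That argument uses the Lipschitz bound on $b$ essentially but needs no martingale-problem machinery. Your argument instead uses Theorem~\ref{thm:vector-valued} as a black box. For uniqueness it needs only measurability and linear growth of $h=G^{-1}b$; continuity of $b$ enters only through existence by compactness. So it also covers, at least for the uniqueness half, the measurable linearly growing drifts that the paper mentions after Remark~\ref{cor:bounded-normalized-drift} but does not treat. Because Novikov holds for each fixed $N$, you only need $\mathbb P(\tau_N<T)\to0$, not estimates uniform in $N$ under the changed measures.

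The price is bookkeeping that you should make explicit.

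\textbf{Local uniqueness up to $\tau_N$.} This requires pasting a drift-free solution after $\tau_N$. You therefore need a Borel kernel $u_0\mapsto$ (law of the drift-free solution from $u_0$). It is available from uniqueness for every deterministic $u_0$, together with the Borel endpoint kernels built in the paper. You also need to check that the glued law satisfies the solution-class condition \eqref{1.5}. This is easiest if you additionally stop when $\|U_t\|_2\ge N$, so the stopped piece is bounded and the pasted piece starts from a bounded initial condition.

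\textbf{The density.} You do not need a universal pathwise version of $\int h(U)\cdot d\tilde W$. Suppose cylindrical simple functionals $I_k(U)$ converge in probability to $\int_0^{\tau_N\wedge T}h(U)\cdot d\tilde W$ under each of the two changed measures. Then $(U_{\cdot\wedge\tau_N},I)$ has the same joint law under both measures. Since $Z^{-1}_{\tau_N\wedge T}$ is a function of this pair, that is all the comparison needs.
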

We can also add a bounded measurable drift via Girsanov, since the diffusion is elliptic:

\begin{remark}[Bounded measurable drift]
\label{cor:bounded-normalized-drift} Via a standard Girsanov uniqueness argument, the following result should hold with a direct proof:

Suppose that the assumptions of Theorem~\ref{thm:vector-valued} hold and for a measurable map $b:\mathbb{R}^d\to\mathbb{R}^d$ define
\[
    q(z)
      :=G(z)^*\bigl(G(z)G(z)^*\bigr)^{-1}b(z).
\]
Then if \(q(z)\) is a bounded measurable function on $\mathbb{R}^d$, then the SHE \eqref{sheu} with drift
\(b\) has a unique weak mild solution on every finite time
interval.
\end{remark}
Since $G$ is elliptic, Remark~\ref{cor:bounded-normalized-drift} already covers
any bounded drift coefficient $b$. More general linearly growing drifts $b$ with no continuity can possibly be
treated by a localized Girsanov--Bene\v{s} argument (see for example
\cite[Theorem~2.1 and Lemma~6.1]{KlebanerLiptser2014}), but this proof requires
uniform energy estimates under the stopped changes of measure and is thus
not presented here.

\begin{remark}
    Although Theorem \ref{theorem1.11} and \ref{thm:vector-valued} and the two corollaries are stated on the torus, the same proof is also expected to work when the state space is the interval $[0,1]$ and we endow $L^2([0,1])$ with either Dirichlet or Neumann boundary condition (after replacing torus heat kernel identities by relevant bounds). Also, we have only used time-independent coefficients $g$ and $b$ for the SHE, but a similar uniqueness argument is expected to extend to time and space-dependent coefficients $g(t,x,\cdot),b(t,x,\cdot)$ whenever the quantitative estimates on the coefficients are uniform in $t>0,x\in\mathbb{T}$.
\end{remark}

Beyond these immediate extensions, we believe that the following extensions can also be made as a further development of the solution theory in this paper. However, all entries in the following lists are future outlooks instead of verified theorem statements:

\begin{enumerate}
\item One can recast the solution theory in this paper into a more abstract formulation of SPDEs on a separable Hilbert space in the sense of \cite{han2025exponential}, \cite{han2024stochastic}. We may also replace the Laplacian with a fractional Laplacian, and derive the corresponding threshold for the Hölder index $\beta$ depending on the power of the Laplacian.

    \item Instead of $L^2(\mathbb{T})$ initial condition, one can start the SHE at a Dirac delta initial condition $\delta_{x},x\in\mathbb{T}$. The uniqueness proof should also be generalizable to general distribution-valued initial laws whenever the solution immediately becomes $L^2$ after the heat semigroup action. 
    \item It might be possible to adapt the proof of Theorem \ref{theorem1.11} to stochastic wave equations, in the sense of \cite{han2024stochastic}. The coupling method probably transfers to second-order wave semigroups.
    \item It might be possible to derive an ergodic theory for the SHE \eqref{1.1} in the sense of \cite{han2025exponential}. However, the difference is that \cite{han2025exponential} uses a Wasserstein distance defined from the metric on $L^2(\mathbb{T})$, but we may need to use a Wasserstein distance defined from the much weaker metric on the Sobolev space $H^{-s}$ for some $s>1/2$.
    \item As a more ambitious goal, one can expect to extend Theorem \ref{theorem1.11} to SHE on the whole real line $\mathbb{R}$. Significant functional analytic work might be needed for this direction. 
\end{enumerate}

\subsection{Main ideas of the proof}

When one first tries to prove weak uniqueness of \eqref{1.1}, a natural idea may be to expand the solution into a convergent series, which is reminiscent of the parametrix method (see \cite{AST_2009__327__47_0}). However, in infinite dimensions the canonical density does not exist and convergence of the series expansion is extremely hard to guarantee from this Hölder $g$. 

A better method may still be to compare the law of the candidate solution $U$ to a SHE solution $V^n$ with smooth coefficients $g_n$ that converge uniformly to $g$. As $g$ is only Hölder, we need a clever method that makes the comparison effective and leads to a Gronwall argument. The comparison must also use ellipticity of $g$.
Previously, \cite{han2025exponential} used the generalized coupling method of \cite{kulik2020well} in the setting. It consists of adding a strong negative drift to the SHE, then combine a pathwise estimate from this strong contraction together with a Girsanov estimate that uses the ellipticity of $g$. The ultimate threshold from this method is, again, $\beta>\frac{3}{4}$. We note that all the contraction estimates and Girsanov estimates of $\|U-V^n\|$ in \cite{han2025exponential} use the distance on $L^2(\mathbb{T})$.

For this type of Hölder $g$ SHEs, the use of $L^2$ distance is fairly natural since the pointwise diffusion coefficient $g$ can only yield a good $L^2$ estimate as we subtract $g(U)-g(V^n)$ pointwise for the candidate solution $U$ and the approximate solution $V^n$. However, it may eventually come up to one's mind that the $L^2$-distance may be suboptimal for this problem, and we can consider the negative Sobolev space $H^{-s}$ for some $s>1/2$. We can instead measure the closeness of $U$ and $V^n$ in this much weaker distance $H^{-s}$ that strongly suppresses higher Fourier modes.

This change of metric may seem a bit cheating at first because the solutions $U,V^n$ nonetheless lie in $L^2(\mathbb{T})$, but in infinite dimensions this change of distance turns out really helpful. One particular reason is that in $H^{-s}$, we can apply Itô formula for the solution since the coefficients are now Hilbert-Schmidt in this norm. This enables a variety of careful estimates that are otherwise unavailable in the $L^2(\mathbb{T})$ metric. Meanwhile, proving weak uniqueness via convergence in the $H^{-s}$ metric is already sufficient for uniqueness in $L^2(\mathbb{T})$.

Then how can we make use of ellipticity of $g$? The previous method of \cite{han2025exponential} uses Girsanov transform, but here we prefer not to use it because Girsanov typically leads to $L^2(\mathbb{T})$ estimates and Girsanov cannot distinguish the signs of plus or minus. Instead, applying Itô to a suitable $C^2$ function can lead to delicate Lyapunov type estimates. Rather, for ellipticity, we are inspired by a reflection coupling method in \cite{wang2015asymptotic} (see also \cite{lindvall1986coupling}, \cite{da2005coupling} for finite-dimensional setting), which splits the noise $gdW$ into two components: one component supplies a scalar non-degenerate white noise while the other component carries the difference $g(U)-g(V^n)$. Then we do a reflection by splitting the first white noise into two more white noises, and the coefficients of them are tailor-made to be a reflection in the direction of $U-V^n$ and at a scale depending on $\|U-V^n\|_{H^{-s}}$. The main point is that when $\|U-V^n\|_{H^{-s}}$ is not too small, the reflection creates a much larger noise in the direction of $U-V^n$, but when tested against a concave test function $f_\delta(r)$, the term in Itô formula involving second derivatives is thus negative. This constitutes a delicate Gronwall argument for the $H^{-s}$-norm of $U-V^n$, and hence leads to uniqueness.
The threshold $\beta>\frac{2}{3}$, which we later discuss, is precisely chosen for this Gronwall argument to close.

Throughout the proof, we are carefully switching between the three Sobolev spaces $H^{-s}$, $L^2(\mathbb{T})$ and also $H^{1-s}$, and we frequently use interpolation estimates between them. Although the reflection coupling method has many precedents in stochastic analysis, the method is seldom used in SHE uniqueness proofs. In particular, the implementation in the current setting involving different Sobolev spaces, and doing reflection in the $L^2$ space at a scale depending on the $H^{-s}$ norm, appears to be fresh. The Itô computations in $H^{-s}$ are also delicate. Using the energy estimate in Lemma \ref{lemma4.237}, the solution is $\ell_t^2H^{1-s}$-valued, and then the interpolation formula \ref{4.3} transfers the $\ell_t^\infty H^{-s}$-estimate to a time-integrated $L^2(\mathbb{T})$-estimate. Such an integrated estimate, unfortunately, cannot be deduced from Girsanov transform, so we used the (slightly weaker) reflection coupling technique and Itô formula.

\subsubsection{Explanation for the current threshold}
We now dig deeper into the proof details and show how the threshold $\frac{2}{3}$ arises. We conjecture that $\frac{2}{3}$ is not the sharp threshold for weak uniqueness, as no counterexamples are known for non-degenerate $g$ for any $\beta>0$. The two natural thresholds beyond $\frac{2}{3}$ are thus $\beta>\frac{1}{2}$ and $\beta>0$. However, $\beta>\frac{2}{3}$ still appears to be the threshold for current coupling-based proof methods unless significant on-diagonal functional analytic mechanisms can be developed.

The obstruction at $\frac{2}{3}$ appears as we optimize over three strict
inequalities. First, to apply Itô calculus in negative Sobolev space $H^{-s}$, the white noise
forces \begin{equation}s>\frac12\end{equation}
in order that \(\operatorname{Tr}(I-\Delta)^{-s}<\infty\).

Let $\delta_n=\max\{\varepsilon_n,n^{-1}\}$ where $\|g_n-g\|_\infty=\epsilon_n\leq \delta_n$, and let $f_{\delta_n}(r)=(r^2+\delta_n^2)^{p/2}$ for some $p\in(0,1)$, We apply Itô calculus to $f_{\delta_n}(\|U(t)-V^n(t)\|_{H^{-s}})$ as a function in $t$.
We estimate the drift \eqref{4.22} after Itô calculus separately in two regimes via different ways: an inner regime (Lemma \ref{lemma5.1}) where the $H^{-s}$-norm of $U-V^n$ is very small or comparable to the coefficient approximation scale, and an outer regime (Lemma \ref{lemma6.1}) where this $H^{-s}$- norm is much larger so that the reflection effect really takes place.

Second, in the inner regime, the Sobolev space interpolation inequality \eqref{4.3}, combined with the use of Young's inequality \eqref{5.11}, \eqref{5.12},
yields a small negative Sobolev norm with the exponent $p-2+2q$ where $q=q(s,\beta)$ is
\begin{equation}
q(s,\beta)=\frac{\beta(1-s)}{1-\beta s},
\label{11.2}
\end{equation}
and this exponent $p-2+2q(s,\beta)$ must be positive because the norm itself is very small.

Third, the reflected distance must use a concave power \(0<p<1\) to guarantee a negative term in \eqref{6.5} when applying Itô to the non-degenerate noise, while
the inner error vanishes only if

\begin{equation}
\gamma=p-2+2q>0,
\qquad\text{equivalently}\qquad p>2-2q.
\label{11.3}
\end{equation}

Such a \(p\) exists exactly when \(q>1/2\).  Since \(q\) decreases with
\(s\), its best possible value is approached as \(s\downarrow1/2\):

\begin{equation}
\sup_{s\in(1/2,1)}q(s,\beta)
=\frac{\beta}{2-\beta}.
\label{11.4}
\end{equation}

The condition in \eqref{11.3} can therefore hold exactly when
\(\beta/(2-\beta)>1/2\), namely \(\beta>2/3\).  At \(\beta=2/3\) all
three choices hit their forbidden endpoints simultaneously.  At
\(\beta=1/2\), the best value is only \(q=1/3\), which would require
\(p>4/3\), directly contradicting the concavity requirement \(p<1\).

Thus optimizing this calculation cannot reach below \(2/3\).  To reach
\(1/2+\varepsilon\), the main bottleneck is the inner regime (Lemma \ref{lemma5.1}) where one would need a genuinely new gain before Young's
inequality \eqref{5.11}, \eqref{5.12}—this is perhaps a space-time cancellation that replaces the obvious bound \eqref{4.6}
\(\|F\|_2^2\lesssim Z^\beta\) by an integrated estimate with an extra
power.  This is difficult because there is no obvious pointwise
sign cancellation.  Reflection controls the radial additive direction $U-V^n$ but
it does not by itself improve this Hölder trace.  This explains both why
the \(2/3\) threshold is already challenging and why the conjectural
\(1/2+\varepsilon\) threshold remains substantially harder.

\section{Preliminaries: notations and noise decomposition}

We begin with some formal definitions of the solution class and the simplest reflection principle.

\subsection{Notation and solution class} Throughout the paper, all Hilbert spaces are real, and we denote by
\(H=L^2(\mathbb T;\mathbb R)\). We fix a countable basis $$e_0=1,\qquad
e_k=\sqrt2\cos(2\pi k\cdot),\qquad
e_{-k}=\sqrt2\sin(2\pi k\cdot),\quad k\ge1.$$ Let

\begin{equation}
A=I-\Delta,
\qquad
\|z\|_{H^a}=\|A^{a/2}z\|_2,
\qquad a\in\mathbb R,
\label{1.3}
\end{equation}
and denote the heat semigroup by \(S_t=e^{t\Delta}\). For any $\alpha\in\mathbb{R}$, the Sobolev space $H^\alpha$ is defined as the closure of $C^\infty(\mathbb{T})$ in the space of distributions on $\mathbb{T}$, under the norm $z\in C^\infty(\mathbb{T})\mapsto \|A^{\alpha/2}z\|_2$. Note that this is the inhomogeneous Sobolev space so that for any $a<0$, $\|z\|_{H^a}=0$ implies $z=0$ in $H$. 

We normalize the
Lebesgue measure of \(\mathbb T\) to be $1$.  A cylindrical Wiener process
on \(H\) is denoted by \(W\).

We introduce a notion of multiplicative operator $M_h$ which shows up the diffusion coefficient of SHE \eqref{1.1}. When $h\in L^\infty(\mathbb{T})$, then $M_h:H\to H,M_h(\phi)=h\phi$ is well-defined operator on $H$. But as the diffusion coefficient $g$ has linear growth, we need to extend the definition of $M_h$ to all $h\in H$.
For $s>1/2$ and $h\in H$, since $L^1(\mathbb{T})$ embeds into $H^{-s}(\mathbb{T})$. We write
\[
\iota_sM_h\in\mathcal L(H,H^{-s})
\]
for the map $\phi\in H\mapsto h\phi\in L^1(\mathbb{T})\subset H^{-s}(\mathbb{T})$. Indeed, we have the estimate
$$
\|\iota_sM_h\phi\|_{H^{-s}}\leq C_s\|h\|_H\|\phi\|_H.
$$

Moreover, for any $t>0$, $S_t\iota_sM_h$ is a (Hilbert-Schmidt) linear operator on $H$ whenever $h\in H,t>0$ via the heat kernel bound later proved in \eqref{3.8f}. We shall always use $S_tM_h$ as an abbreviation for $S_t\iota_sM_h$ throughout this paper.

In this paper, a weak mild solution of \eqref{1.1} consists of a filtered
probability space $(\Omega,\mathbb{P},\mathcal{F})$, a cylindrical Wiener process \(W\), and a predictable
\(H\)-valued process \(U\) such that
\begin{equation}
U(t)=S_tu(0)+
\int_0^t S_{t-r}M_{g(U(r))}\,dW_r,
\qquad t\in[0,T],
\label{1.4}
\end{equation}
and
\begin{equation}
U\in C([0,T];H)
\quad\text{a.s.},
\qquad
\mathbb E\sup_{t\le T}\|U_t\|_2^2<\infty.
\label{1.5}
\end{equation}

Since the coefficient $g$ is Hölder continuous, the existence of a weak mild solution to \eqref{1.1} follows from the standard compactness argument in \cite{gkatarek1994weak}. Furthermore, for such $g$ with linear growth,  the standard
stochastic-convolution estimates (see Lemma \ref{lemma4.237}) give \eqref{1.5} and that, for every
\(s>1/2\),

\begin{equation}
U\in L^2(\Omega;C([0,T];H^{-s}))
\cap L^2(\Omega\times[0,T];H^{1-s}).
\label{1.6}
\end{equation}Related estimates are presented in detail in Lemma \ref{lemma4.237}.

\subsection{Two-noise representation}\label{sections2}
The main idea of this section is that we can split the diffusion term $g(u)dW$ into two independent diffusion terms, one being a multiple of the standard white noise and the other contains the main Hölder irregular part. This construction carries the main idea of the reflection coupling principle in \cite{wang2015asymptotic}, \cite{lindvall1986coupling} and \cite{da2005coupling}. We begin with a split of the coefficient $g(u)$.

Fix a constant $$m[g]\in(0,\frac{c_g}{4})$$ and define

\begin{equation}
h(z)=\sqrt{g(z)^2-m[g]^2}.
\label{2.1}
\end{equation}

Then \(h\in C^\beta\), and the equation with noise

\begin{equation}
m[g]\,dW^0+h(u)\,dW^1
\label{2.2}
\end{equation}

has the same local covariance \(g(u)^2\) as \eqref{1.1}, provided
\(W^0,W^1\) are two independent white noises.

Indeed, since \(h\ge\sqrt{c_g^2-m[g]^2}>2m[g]>0\), we have $h(z)\geq \frac{1}{2}|g(z)|$ for any $z$, so that for any $a,b\in\mathbb{R}$, we have

\begin{equation}
|h(a)-h(b)|
=\frac{|g(a)^2-g(b)^2|}{|h(a)+h(b)|}
\le {[g]_{C^\beta}}|a-b|^\beta\frac{|g(a)+g(b)|}{|h(a)+h(b)|}\le C_{\text{Hol,g}}|a-b|^\beta,
\label{2.2a}
\end{equation}
where $C_{\text{Hol,g}}>0$ is a constant depending only on $[g]_{C^\beta}$ and $c_g$.
Thus the new coefficient $h$ we constructed has the same Hölder index $\beta$.

The following basic lemma in stochastic analysis allows us to split the randomness in the white noise into two independent components. (For finite dimensional Brownian motion the same split follows from Lévy's characterization of Brownian motion.) This is the starting point of our reflection coupling argument, where we reflect only part of the white noise.
\begin{lemma}[Orthogonal split of a white noise]\label{lemmas2.1}

Let \((W,W')\) be two independent cylindrical Wiener processes on \(H\),
and let \(a_t,b_t\) be bounded predictable processes
satisfying \(a_t(x)^2+b_t(x)^2=1\) for all $x\in\mathbb{T}$ (so that both processes lie in $L^\infty(\mathbb{T})$ a.s.) Define

\begin{equation}
\begin{aligned}
dB^0_t&=a_t\,dW_t+b_t\,dW'_t,\\
dB^1_t&=b_t\,dW_t-a_t\,dW'_t.
\end{aligned}
\label{2.2b}
\end{equation}
Then \((B^0,B^1)\) is a pair of independent cylindrical Wiener
processes in the enlarged filtration.

\end{lemma}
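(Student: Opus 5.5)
The plan is to verify the hypotheses of Lévy's characterization for cylindrical Wiener processes. It suffices to show that for every pair of test functions $\phi,\psi\in H$ the real processes $B^0_t(\phi):=\int_0^t\langle a_r\phi,dW_r\rangle+\int_0^t\langle b_r\phi,dW'_r\rangle$ and $B^1_t(\psi):=\int_0^t\langle b_r\psi,dW_r\rangle-\int_0^t\langle a_r\psi,dW'_r\rangle$ are continuous local martingales in the joint filtration generated by $(W,W')$. Here $a_r\phi$ denotes the pointwise product $M_{a_r}\phi$, which lies in $H$ because $a_r\in L^\infty(\mathbb T)$ is bounded. Moreover, their brackets should be
\[
\langle B^0(\phi),B^0(\phi')\rangle_t=t\langle\phi,\phi'\rangle,\quad
\langle B^1(\psi),B^1(\psi')\rangle_t=t\langle\psi,\psi'\rangle,\quad
\langle B^0(\phi),B^1(\psi)\rangle_t=0 .
\]
Given these identities, the multidimensional Lévy theorem, applied to any finite family $\{B^0(\phi_i),B^1(\psi_j)\}$, shows that the family is a centered Gaussian process with independent increments and the covariance of two independent cylindrical Wiener processes. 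This is exactly the claim, and it holds in the enlarged filtration since Lévy's theorem is stated relative to whatever filtration the processes are martingales in.

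The stochastic integrals are well defined because $a,b$ are bounded and predictable, so $M_{a_r}$ and $M_{b_r}$ are bounded operators on $H$ with operator norm at most $1$. Hence the integrands $M_{a_r}\phi$ are predictable $H$-valued with $\int_0^t\|a_r\phi\|_2^2\,dr\le t\|\phi\|_2^2<\infty$. In particular the processes are true square-integrable martingales, not merely local ones.

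The bracket computation uses the independence of $W$ and $W'$, which gives $\langle W(\cdot),W'(\cdot)\rangle=0$, together with the Itô isometry for cylindrical processes, $d\langle\int\langle f,dW\rangle,\int\langle f',dW\rangle\rangle_r=\langle f_r,f'_r\rangle\,dr$. For $B^0$ this yields
\[
d\langle B^0(\phi),B^0(\phi')\rangle_r=\big(\langle a_r\phi,a_r\phi'\rangle+\langle b_r\phi,b_r\phi'\rangle\big)\,dr=\int_{\mathbb T}(a_r^2+b_r^2)\phi\phi'\,dx\,dr=\langle\phi,\phi'\rangle\,dr,
\]
where the last step uses the pointwise identity $a^2+b^2=1$. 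The same computation applies to $B^1$. For the cross bracket,
\[
d\langle B^0(\phi),B^1(\psi)\rangle_r=\big(\langle a_r\phi,b_r\psi\rangle-\langle b_r\phi,a_r\psi\rangle\big)dr=\int_{\mathbb T}(a_rb_r-b_ra_r)\phi\psi\,dx\,dr=0 .
\]
The essential point is that the multiplication operators are self-adjoint and commute, so the matrix $\begin{pmatrix}a&b\\ b&-a\end{pmatrix}$ acts pointwise as an orthogonal matrix.

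The main subtlety, rather than an obstacle, is making sure that $B^0,B^1$ really define cylindrical Wiener processes as linear maps $\phi\mapsto B^i_t(\phi)$ with continuous paths. Linearity in $\phi$ is clear. Continuity of each $B^i(\phi)$ follows from its being a stochastic integral. From the bracket identities and Lévy's theorem, $\phi\mapsto B^i_t(\phi)$ is an isometry from $H$ into $L^2(\Omega)$ scaled by $\sqrt t$, which is the definition of a cylindrical Wiener process. Independence of $B^0$ and $B^1$ follows because jointly Gaussian families with zero cross-covariance are independent.
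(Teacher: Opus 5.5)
Your proposal is correct and follows essentially the same route as the paper. You compute the brackets using the independence of $W,W'$ and the pointwise identity $a^2+b^2=1$, note that the cross-bracket vanishes because the multiplication operators commute, and then apply Lévy's characterization to finite families of test functions. One wording point: the relevant filtration is the given solution filtration enlarged by $W'$, in which $a,b$ are predictable, rather than the filtration generated by $(W,W')$ alone; as you yourself note, Lévy's theorem applies in whatever filtration these processes are martingales in.
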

\begin{proof}

Take two test directions \(\phi,\psi\in H\), then independence of $W_t,W_t'$ implies

\begin{equation}
\begin{aligned}
d[B^0(\phi),B^0(\psi)]_t
&=\langle a_t\phi,a_t\psi\rangle dt
+\langle b_t\phi,b_t\psi\rangle dt
=\langle\phi,\psi\rangle dt,\\
d[B^1(\phi),B^1(\psi)]_t
&=\langle\phi,\psi\rangle dt,\\
d[B^0(\phi),B^1(\psi)]_t
&=\langle a_t\phi,b_t\psi\rangle dt
-\langle b_t\phi,a_t\psi\rangle dt=0.
\end{aligned}
\label{2.2c}
\end{equation}
For every finite family of tests \(\phi_1,\ldots,\phi_k\in H\), the resulting
\(2k\)-dimensional continuous local martingale has the specified
Brownian covariance.  Lévy's characterization then makes it a standard
Brownian motion with the indicated covariance.  This finite-dimensional consistency is precisely the defining property of a cylindrical Brownian motion, and the zero
cross-brackets give independence of $B^0$ and $B^1$.  \end{proof}

This representation can be installed on \textbf{any prescribed weak
solution}.  Indeed, let \((u,W)\) be such a solution to \eqref{1.1}, we enlarge the space
by an independent white noise \(W'\), and put, pointwise and predictably,

\begin{equation}
\begin{aligned}
dW^0&=\frac {m[g]}{g(u)}\,dW+\frac{h(u)}{g(u)}\,dW',\\
dW^1&=\frac{h(u)}{g(u)}\,dW-\frac {m[g]}{g(u)}\,dW'.
\end{aligned}
\label{2.3}
\end{equation}

Apply Lemma \ref{lemmas2.1} with the two predictable processes defined by

\begin{equation}
a=\frac {m[g]}{g(u)},
\qquad
b=\frac{h(u)}{g(u)}.
\label{2.3a}
\end{equation}

The identity \(a^2+b^2=1\) shows that \(W^0,W^1\) are independent white
noises in the enlarged filtration.  Moreover,

\begin{equation}
m[g]\,dW^0+h(u)\,dW^1=g(u)\,dW,
\label{2.4}
\end{equation}
so the given solution $(u,W)$ has not been changed.

\section{The reflection coupling with respect to an arbitrary solution}\label{sections3}
Since \(g\) is continuous and \(|g|\ge c_g\), it has a constant sign.
Replacing \(W\) by \(-W\) if necessary, we assume henceforth that
\(g\ge c_g\).

We will prove weak uniqueness via an approximation argument. Choose smooth, globally Lipschitz functions $g_n\in C^\infty(\mathbb{R})$ satisfying that
\begin{equation}
g_n\longrightarrow g\quad\hbox{uniformly on $\mathbb{R}$},
\qquad
c_g/2\le g_n, \quad [g_n]_{C^\beta}\leq 2[g]_{C^\beta}
\label{3.1}
\end{equation}
(a standard mollifier on $\mathbb{R}$ suffices to get $g_n$: let $\rho$ be a smooth compactly supported function with integral one and define $g_\epsilon=\rho_\epsilon*g$, then $|g_\epsilon(x)-g(x)\leq [g]_{C^\beta}\epsilon^\beta |\int_\mathbb{R} \rho(z)|z|^\beta dz|$. Moreover, since $\int \rho_\epsilon'=0$, then from $g_\epsilon'(x)=\int \rho_\epsilon'(y)(g(x-y)-g(x))dy$ we see that $|g_\epsilon'(x)|$ has an $\epsilon$-dependent bound for all $x$),
and define
\begin{equation}
h_n(z)=\sqrt{g_n(z)^2-m[g]^2},
\qquad
\varepsilon_n:=\|h_n-h\|_\infty\longrightarrow0.
\label{3.2}
\end{equation}

Because \(g_n\ge c_g/2\geq 2m[g]\), each \(h_n\) is smooth and globally
Lipschitz.  The map $x\mapsto \sqrt{x^2-m[g]^2}$ is uniformly Lipschitz for 
\(x\in[c_g/2,\infty)\), so uniform convergence of \(g_n\) also gives
the asserted uniform convergence of \(h_n\). Moreover, by computations in \eqref{2.2a}, $[h_n]_{C^\beta}<\infty$ is uniformly bounded in $n$.

The following $g_n$-equation, having Lipschitz coefficients
\begin{equation}
\partial_tu_n=\Delta u_n+g_n(u_n)\,\dot W,
\qquad u_n(0)=u_0\in L^2(\mathbb{T}),
\label{unequation}\end{equation}has a unique strong solution. Hence the solution $u_n$ has a canonical
law, denoted by \(P_n\).

Fix an arbitrary weak solution \(U\) of the original \(g\)-equation \eqref{1.1} and install
the noises \(W^0,W^1\) as in Section \ref{sections2}.  We then adjoin a third independent
white noise \(\overline W^0\) for a future technical reason.  We let \(V^n\) be the process of strong solutions associated to the $g_n$-equation $u_n$ in \eqref{unequation} with the canonical
law \(P_n\), defined on this enlarged probability space supporting $W^0,W^1$.

Denote by, for some currently unfixed $s>0$, the following norms and rescaling of $D\in H$:

\begin{equation}
A=I-\Delta,
\qquad
r_s(D):=\|D\|_{H^{-s}}=\|A^{-s/2}D\|_2,
\qquad
e(D)=\frac D{\|D\|_2},\qquad D\ne0\in H,
\label{3.3}
\end{equation}since $D\in H,$ then $D\in H^{-s}$ as $s>0$. Here $\|D\|_2=\|D\|_{L^2(\mathbb{T})}$ is the $L^2$-norm. If $D=0$, then we simply define $e(D)=0$.

The exponents \(s\) will be fixed in \eqref{5.1}. Take another constant $p\in(0,1)$ which will be later fixed in \eqref{5.3}. Given a (small) approximation scale
\(\delta>0\), choose \(K_0>2/\sqrt{1-p}\) and we choose a smooth angle function
\(\theta_\delta:[0,\infty)\to[0,\pi]\) such that

\begin{equation}
\theta_\delta(r)=0\quad(r\le K_0\delta),
\qquad
\theta_\delta(r)=\pi\quad(r\ge2K_0\delta).
\label{3.4}
\end{equation}
and such that for some universal constant $C_0>0$,
$$\|\theta_\delta'\|_\infty\le C_0/\delta$$(such a function $\theta_\delta$ can be chosen by taking a smooth transition profile and then rescale it).  We define the following two mappings $K_\delta,S_\delta:H\to \mathcal{L}(H)$, with
\(e\otimes e=0\) when \(D=0\),

\begin{equation}
\begin{aligned}
K_\delta(D)
&=I+(\cos\theta_\delta(r_s(D))-1)e(D)\otimes e(D),\\
S_\delta(D)
&=\sin\theta_\delta(r_s(D))e(D)\otimes e(D).
\end{aligned}
\label{3.5}
\end{equation}

Then we can verify that
\begin{lemma}\label{equalitytwomaps}The maps $K_\delta,S_\delta$ satisfy that
\begin{equation}
K_\delta K_\delta^*+S_\delta S_\delta^*=I.
\label{3.6}
\end{equation}
\end{lemma}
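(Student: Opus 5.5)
The identity is a direct algebraic check once we note that $e(D)\otimes e(D)$ is an orthogonal projection. Fix $D\in H$ and split into two cases.

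\emph{Case $D=0$.} By convention $e\otimes e=0$, so $K_\delta(D)=I$ and $S_\delta(D)=0$. Then \eqref{3.6} holds trivially.

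\emph{Case $D\neq0$.} Write $e=e(D)$, which is a unit vector in $H$, and $P=e\otimes e$, the map $\phi\mapsto\langle\phi,e\rangle e$. Then $P$ is the rank-one orthogonal projection onto $\mathrm{span}\{e\}$, so $P=P^*$ and $P^2=P$. Abbreviate $\theta=\theta_\delta(r_s(D))$, $c=\cos\theta$ and $\sigma=\sin\theta$. These are real scalars, so
\[
K_\delta(D)=I+(c-1)P
\]
is self-adjoint, and $S_\delta(D)=\sigma P$ is self-adjoint too.

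Now expand using $P^2=P$:
\[
K_\delta K_\delta^*=I+2(c-1)P+(c-1)^2P=I+(c^2-1)P,
\]
\[
S_\delta S_\delta^*=\sigma^2P.
\]
Adding the two gives $I+(c^2+\sigma^2-1)P=I$, which is \eqref{3.6}.

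Geometrically, $K_\delta$ acts as the identity on $e^\perp$ and as multiplication by $\cos\theta$ on $\mathrm{span}\{e\}$, while $S_\delta$ acts as multiplication by $\sin\theta$ on $\mathrm{span}\{e\}$. The identity is therefore just $\cos^2+\sin^2=1$ on that one direction.

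There is no real obstacle here. The only points needing care are the convention at $D=0$ and the fact that $e(D)$ is normalized in $L^2$, not in $H^{-s}$. The adjoints in \eqref{3.6} are taken in $H=L^2$, so the $L^2$ normalization is exactly what makes $P$ an orthogonal projection. The scale $r_s(D)$ enters only through the scalar angle $\theta$ and does not affect the argument.
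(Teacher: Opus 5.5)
Your proposal is correct and takes the same route as the paper. Both arguments use that $P_D=e(D)\otimes e(D)$ is a self-adjoint idempotent and expand $[I+(c-1)P_D]^2+\sigma^2P_D^2$ to get $I+(c^2+\sigma^2-1)P_D=I$; your explicit check of the $D=0$ convention is a small detail the paper leaves implicit.
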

\begin{proof}
    Denote by $P_D=e(D)\otimes e(D)$, which is a rank-one operator via $P_Dx=\langle x,e(D)\rangle e(D)$. Then $P_D$ is a projection and $P_D^2=P_D=P_D^*$. Write $c_1=\cos\theta_\delta(r_s(D))$ and $c_2=\sin\theta_\delta(r_s(D))$, then $c_1^2+c_2^2=1$. And consequently
    $$
K_\delta K_\delta^*+S_\delta S_\delta^*=[I+(c_1-1)P_D]^2+c_2^2P_D^2=I.
    $$
\end{proof}
We define an auxiliary process with respect to this noise process after orthogonal transform, which we eventually check is indeed the solution process to $u_n$ with law $P_n$. Let \(V^n\) solve the following SHE

\begin{equation}
\begin{aligned}
dV^n(t)={}&\Delta V^n(t)\,dt
+m[g]K_\delta(U(t)-V^n(t))\,dW^0
+m[g]S_\delta(U(t)-V^n(t))\,d\overline W^0\\
&+h_n(V^n(t))\,dW^1,
\qquad V^n(0)=U(0).
\end{aligned}
\label{3.7}
\end{equation}

We first check that the coefficients in the SHE $V^n$ satisfy bounded and Lipschitz properties. The crucial point here is that we only reflect in one direction $U-V^n$, so the rank-1 update coefficient maps $K_\delta,S_\delta$ are indeed Lipschitz continuous in Hilbert-Schmidt norm.

\begin{lemma}[Lipschitz bounds on the smoothed reflection SHEs]\label{lemmas3.1}
For every fixed \(\delta>0\),

\begin{equation}
D\to K_\delta(D)-I,
\qquad
D\to S_\delta(D)
\label{3.8}
\end{equation}
are globally Lipschitz maps from \(H\) to the Hilbert--Schmidt
operators on \(H\).  More precisely,

\begin{equation}
\|K_\delta(D)-K_\delta(E)\|_{\mathrm{HS}}
+\|S_\delta(D)-S_\delta(E)\|_{\mathrm{HS}}
\le C_{K_0}\delta^{-1}\|D-E\|_H,
\label{3.8a}
\end{equation}
where $C_{K_0}>0$ only depends on $K_0$.
Furthermore,

\begin{equation}
\|K_\delta(D)\|_{\mathrm{op}}\le1,
\qquad
\|S_\delta(D)\|_{\mathrm{op}}\le1,
\qquad
K_\delta K_\delta^*+S_\delta S_\delta^*=I.
\label{3.8b}
\end{equation}

\end{lemma}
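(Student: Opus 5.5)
The plan is to write both maps in terms of the rank-one projection $P_D=e(D)\otimes e(D)$ and scalar functions of $r_s(D)$:
\[
K_\delta(D)-I=\phi(r_s(D))P_D,\qquad S_\delta(D)=\psi(r_s(D))P_D,
\]
where $\phi=\cos\theta_\delta-1$ and $\psi=\sin\theta_\delta$. Both $\phi$ and $\psi$ are bounded by $2$. They are Lipschitz with constant $C_0/\delta$, and both vanish for $r\le K_0\delta$.

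The bounds in \eqref{3.8b} are immediate. $K_\delta(D)$ acts as the identity on $e(D)^\perp$ and as multiplication by $\cos\theta_\delta$ on $e(D)$, so its operator norm is at most $1$. Similarly $\|S_\delta(D)\|_{\rm op}=|\sin\theta_\delta|\le1$. The identity $K_\delta K_\delta^*+S_\delta S_\delta^*=I$ is Lemma \ref{equalitytwomaps}.

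For the Lipschitz bound \eqref{3.8a}, I split
\[
\phi(r_s(D))P_D-\phi(r_s(E))P_E=\bigl(\phi(r_s(D))-\phi(r_s(E))\bigr)P_D+\phi(r_s(E))(P_D-P_E).
\]
The first term has HS norm at most $(C_0/\delta)|r_s(D)-r_s(E)|\le (C_0/\delta)\|D-E\|_{H^{-s}}\le (C_0/\delta)\|D-E\|_2$, because $\|P_D\|_{\rm HS}\le 1$ and $\|A^{-s/2}\|_{\rm op}\le1$.

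The second term is the main obstacle, since the direction map $D\mapsto e(D)$ is not Lipschitz near $0$. The cutoff handles this. The factor $\phi(r_s(E))$ vanishes unless $r_s(E)>K_0\delta$, and in that case $\|E\|_2\ge \|E\|_{H^{-s}}>K_0\delta$. By symmetry I may assume $\|D\|_2\ge\|E\|_2$; otherwise I use the symmetric splitting with the roles of $D$ and $E$ swapped. Then both norms exceed $K_0\delta$, and the standard estimate
\[
\|e(D)-e(E)\|_2\le \frac{2\|D-E\|_2}{\max(\|D\|_2,\|E\|_2)}
\]
applies. Combined with $\|P_D-P_E\|_{\rm HS}\le 2\|e(D)-e(E)\|_2$, this gives a bound of $8\|D-E\|_2/(K_0\delta)$.

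There is one subtlety in the symmetric choice. If $\|D\|_2<\|E\|_2$, I instead write the difference as $(\phi(D)-\phi(E))P_E+\phi(D)(P_D-P_E)$. Then $\phi(r_s(D))\ne0$ forces $\|D\|_2>K_0\delta$, and hence $\|E\|_2>K_0\delta$ as well. If either vector is zero, the corresponding $\phi$ factor vanishes, so the convention $e(0)=0$ causes no trouble.

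The same argument applies verbatim to $\psi$. Summing the two pieces gives \eqref{3.8a} with $C_{K_0}$ depending on $C_0$ and $K_0$, where $C_0$ is a universal constant. Since $K_\delta(D)-I$ and $S_\delta(D)$ are rank one with HS norm at most $2$, they are Hilbert–Schmidt, and so the maps in \eqref{3.8} are globally Lipschitz into the Hilbert–Schmidt operators.
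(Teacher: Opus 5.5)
Your proof is correct and follows essentially the same route as the paper: write the maps as $\zeta_\delta(r_s(D))P_D$, use that $r_s$ is $1$-Lipschitz and that $\|P_D-P_E\|_{\mathrm{HS}}\le 2\|e(D)-e(E)\|_2\le 4\|D-E\|_2/\|D\|_2$, and note that the cutoff forces $\|\cdot\|_2\ge\|\cdot\|_{H^{-s}}>K_0\delta$ wherever the direction map matters. Your single add-and-subtract splitting only packages the paper's case analysis (both points active versus one inactive) into one formula, and your symmetry step is unnecessary: $\|e(D)-e(E)\|_2\le 2\|D-E\|_2/\|E\|_2$ already holds directly, even for $D=0$ with the convention $e(0)=0$.
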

Notice carefully
that \(K_\delta(D)\) itself is not Hilbert--Schmidt: it is the identity
plus a rank-one operator.  Only its differences are Hilbert--Schmidt.
\begin{proof}

The map \(r_s(D)=\|A^{-s/2}D\|_2\) is 1-Lipschitz on \(H\) since $s>0$.  For
\(D,E\ne0\), recall that \(P_D=e(D)\otimes e(D)\) is a rank-one operator, then we take the difference:
$$
\|P_D-P_E\|_{HS}\leq \|e(D)\otimes (e(D)-e(E))\|_{HS}+\|e(E)\otimes (e(D)-e(E))\|_{HS}\leq 2\|e_D-e_E\|_2,
$$using that $\|e_D\|_2=\|e_E\|_2=1$. Then by triangle inequality again,

\begin{equation}
\|e_D-e_E\|_2
\le \left\|\frac {D-E}{\|D\|_2}\right\|_2+\|E\|_2\left\|\frac{1}{\|E\|_2}-\frac{1}{\|D\|_2}\right\|_2
\le \frac{2\|D-E\|_2}{\|D\|_2}.
\label{3.8c}
\end{equation}

Consider \(G(D)=\zeta_\delta(r_s(D))P_D\), where \(\zeta_\delta=0\)
on \([0,K_0\delta]\), \(\|\zeta_\delta\|_\infty\le2\), and
\(\operatorname{Lip}(\zeta_\delta)\le C/\delta\).  If both points are
in the active region, so that $\|D\|_{H^{-s}}\geq K_0\delta,\|E\|_{H^{-s}}\geq K_0\delta$, then
\(\|D\|_2,\|E\|_2\ge K_0\delta\), and \eqref{3.8c}, together with the
Lipschitz bound on \(\zeta_\delta\), proves \eqref{3.8a}.  If, say,
\(r_s(E)\le K_0\delta<r_s(D)\), use \(G(E)=0\) and thus

\begin{equation}
\|G(D)\|_{\mathrm{HS}}
=|\zeta_\delta(r_s(D))-\zeta_\delta(r_s(E))|
\le C_{K_0}\delta^{-1}\|D-E\|_2.
\label{3.8d}
\end{equation}
This proves the assertion \eqref{3.8a} on $K_\delta,S_\delta$ with the choice
\(\zeta_\delta=\cos\theta_\delta-1\) and with
\(\zeta_\delta=\sin\theta_\delta\).  On the line spanned by \(D\),
the two operators have eigenvalues \(\cos\theta_\delta(r_s(D))\) and
\(\sin\theta_\delta(r_s(D))\); on its orthogonal complement they have
eigenvalues \(1\) and \(0\).  This proves \eqref{3.8b}.  
\end{proof}

It is then standard to verify that the SPDE \eqref{3.7} has a unique mild solution $V^n$:

\begin{lemma}[Uniqueness of solutions $V^n$]\label{lemmas3.3}
For each fixed \(n,\delta,T\), the equation \eqref{3.7} has a unique predictable mild
solution \(V^n\in C([0,T];H)\), up to indistinguishability, and

\begin{equation}
\sup_{t\le T}\|V_t^n\|_2^2<\infty,\quad  a.s.
\label{3.8e}
\end{equation}

\end{lemma}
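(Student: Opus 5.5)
We will prove Lemma \ref{lemmas3.3} by a Picard fixed point argument for the mild formulation of \eqref{3.7}, treating $U$ as a given predictable input. It is convenient to split the noise term as
\[
m[g]K_\delta(D)\,dW^0=m[g]\,dW^0+m[g](K_\delta(D)-I)\,dW^0 .
\]
The first part contributes the additive stochastic convolution $\int_0^t S_{t-r}m[g]\,dW^0_r$. Since $\sum_k e^{-2t\lambda_k}$ is integrable near $t=0$ in one dimension, this convolution is a continuous $H$-valued process by the standard factorization method. The remaining coefficients are
\[
D\mapsto m[g](K_\delta(D)-I),\qquad D\mapsto m[g]S_\delta(D).
\]
By Lemma \ref{lemmas3.1} they are globally Lipschitz from $H$ into Hilbert--Schmidt operators, with constant $C\delta^{-1}$. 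They are also uniformly bounded in $\mathrm{HS}$ norm, since each is (at most) rank one with operator norm $\le 2$. Consequently $V\mapsto K_\delta(U-V)-I$ is Lipschitz in $V$, uniformly in $\omega$ and $t$, and measurability is inherited from $U$.

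The multiplicative term $h_n(V)\,dW^1$ is handled as for the Lipschitz equation \eqref{unequation}. Since $h_n$ is globally Lipschitz, we have
\[
\|S_{t-r}M_{h_n(V)-h_n(V')}\|_{\mathrm{HS}}^2\le \|S_{t-r}\|_{L^2\to L^\infty}^2\,\|h_n(V)-h_n(V')\|_2^2\lesssim (t-r)^{-1/2}\,\mathrm{Lip}(h_n)^2\,\|V-V'\|_2^2 .
\]
This uses the heat kernel bound $\sum_k e^{-2(t-r)\lambda_k}\lesssim (t-r)^{-1/2}$, which is integrable. The same bound combined with the linear growth of $h_n$ gives $\|S_{t-r}M_{h_n(V)}\|_{\mathrm{HS}}^2\lesssim (t-r)^{-1/2}(1+\|V\|_2^2)$.

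We then set up the contraction. Let $\mathcal X_T$ be the space of predictable $H$-valued processes with norm $\sup_{t\le T}\mathbb E\|V_t\|_2^2$, or an equivalent exponentially weighted norm $\sup_t e^{-\lambda t}\mathbb E\|V_t\|_2^2$. Define $\Gamma$ by the right-hand side of the mild form of \eqref{3.7}. By the Itô isometry and the independence of $W^0,\overline W^0,W^1$,
\[
\mathbb E\|\Gamma V_t-\Gamma V'_t\|_2^2\le C_{n,\delta}\int_0^t \bigl(1+(t-r)^{-1/2}\bigr)\,\mathbb E\|V_r-V'_r\|_2^2\,dr .
\]
For the two reflection terms we use $\|S_{t-r}\|_{\mathrm{op}}\le 1$ together with the HS bounds from \eqref{3.8a}; for the $h_n$ term we use the bound above. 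A weighted norm with $\lambda$ large, or a generalized (Volterra) Gronwall iteration, then makes $\Gamma$ a contraction, which yields existence and uniqueness of a mild solution in $\mathcal X_T$. Uniqueness among all continuous predictable mild solutions follows by localization: we stop at $\tau_N=\inf\{t:\|V_t\|_2\ge N\}$ and run the same Volterra–Gronwall estimate on the stopped difference. The initial datum $U(0)=u_0\in L^2$ is deterministic, and $S_tu_0$ is continuous in $H$.

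The main obstacle is path continuity in $H$ together with the almost sure bound \eqref{3.8e}, because the $h_n$ coefficient is a multiplication operator and is not Hilbert--Schmidt on $H$. We will use the factorization method of Da Prato–Kwapień–Zabczyk. For $\alpha\in(0,1/4)$ write the convolution as $\frac{\sin\pi\alpha}{\pi}\int_0^t (t-r)^{\alpha-1}S_{t-r}Y_r\,dr$, where
\[
Y_r=\int_0^r (r-\sigma)^{-\alpha}S_{r-\sigma}M_{h_n(V_\sigma)}\,dW^1_\sigma .
\]
The moment bound $\mathbb E\|Y_r\|_2^{q}\lesssim \bigl(\int_0^r (r-\sigma)^{-2\alpha-1/2}\,d\sigma\bigr)^{q/2}\sup_{\sigma}\mathbb E(1+\|V_\sigma\|_2^{q})$ is finite for $q$ large. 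Here the $q$-th moments of $V$ are obtained from a first Picard step in $\sup_t\mathbb E\|\cdot\|^q$ via BDG. With $Y\in L^q(0,T;H)$ and $q>1/\alpha$, the operator $R_\alpha$ maps into $C([0,T];H)$. The reflection terms and the additive term $m[g]\,dW^0$ are treated identically, and are in fact easier since their coefficients have bounded HS norm or are the identity. Combining the BDG moment bound with continuity gives $\mathbb E\sup_{t\le T}\|V_t\|_2^2<\infty$, and in particular \eqref{3.8e}.
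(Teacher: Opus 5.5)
Your proposal is correct and follows the paper's route. The paper likewise treats $U$ as a fixed predictable input, runs the Da Prato--Zabczyk Picard scheme with the bound $\|S_tM_f\|_{\mathrm{HS}}^2\le Ct^{-1/2}\|f\|_2^2$ and the Hilbert--Schmidt Lipschitz bounds of Lemma~\ref{lemmas3.1}, and gets continuity in $H$ from the factorization method, so your argument is a detailed expansion of its citation; splitting $K_\delta=I+(K_\delta-I)$ is only a cosmetic variant of the paper's estimate $\|S_tK_\delta(D)\|_{\mathrm{HS}}\le\|S_t\|_{\mathrm{HS}}$.
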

Note that we are not yet proving an estimate uniform in $n$ and $\delta$. The a-priori estimate $V^n\in C([0,T];H)$ will be done again for the process $U$ in Lemma \ref{lemma4.237}.

\begin{proof} 
We regard the solution $U$ as a prescribed predictable process. 

On the one-dimensional torus, for $0<t\le T$ and $f\in L^2(\mathbb T)$,
\begin{equation}
\|S_tM_f\|_{\mathrm{HS}}^2
=
\int_{\mathbb T}p_{2t}(x,x)|f(x)|^2\,dx
\le Ct^{-1/2}\|f\|_2^2.
\label{3.8f}
\end{equation}
In particular,
\[
    \|S_t\|_{\mathrm{HS}}^2\le Ct^{-1/2},
\]
which is integrable on $[0,T]$. Since $h_n$ is globally Lipschitz,
\begin{equation}
\|S_tM_{h_n(v)-h_n(w)}\|_{\mathrm{HS}}^2
\le
C_Tt^{-1/2}[h_n]_{\mathrm{Lip}}^2\|v-w\|_2^2.
\label{3.8g}
\end{equation}

By Lemma \ref{lemmas3.1} and using
$\|S_t\|_{\mathrm{op}}\le 1$,
\begin{equation}
\begin{aligned}
&\|S_t(K_\delta(D)-K_\delta(E))\|_{\mathrm{HS}}
 +\|S_t(S_\delta(D)-S_\delta(E))\|_{\mathrm{HS}}  \\
&\hspace{3cm}
\le C\delta^{-1}\|D-E\|_2.
\end{aligned}
\label{3.8h}
\end{equation}
Likewise, using also Lemma \ref{lemmas3.1},
\begin{equation}
\begin{aligned}
\|S_tK_\delta(D)\|_{\mathrm{HS}}
&\le
\|S_t\|_{\mathrm{HS}}\|K_\delta(D)\|_{\mathrm{op}}
\le \|S_t\|_{\mathrm{HS}},\\
\|S_tS_\delta(D)\|_{\mathrm{HS}}
&\le
\|S_t\|_{\mathrm{HS}}\|S_\delta(D)\|_{\mathrm{op}}
\le \|S_t\|_{\mathrm{HS}}.
\end{aligned}
\label{3.8i}
\end{equation}

Then a standard Picard iteration argument in the sense of \cite[Section 7]{da2014stochastic} proves the pathwise uniqueness of $V^n$. The a-priori estimate $V^n\in C([0,T];H)$ follows from \cite[Section 5]{da2014stochastic}, and see also Lemma \ref{lemma4.237} where the same argument is used again for energy estimates.
\end{proof}

\begin{lemma}[Preservation of the smooth marginal]\label{preservations}

The solution $V^n$ to \eqref{3.7} satisfies

\begin{equation}\operatorname{Law}(V^n)=P_n,
\label{3.8jj}
\end{equation}
and this marginal law is independent of the candidate solution \(U\).

\end{lemma}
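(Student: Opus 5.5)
The plan is to show that the noise driving $V^n$ in \eqref{3.7} is, in law, just another cylindrical Wiener process, so that $V^n$ solves the $g_n$-equation \eqref{unequation} with respect to a new white noise. Define
\[
d\widetilde W^0_t:=K_\delta(U(t)-V^n(t))\,dW^0_t+S_\delta(U(t)-V^n(t))\,d\overline W^0_t ,
\]
and then
\[
d\widetilde W_t:=\frac{m[g]}{g_n(V^n(t))}\,d\widetilde W^0_t+\frac{h_n(V^n(t))}{g_n(V^n(t))}\,dW^1_t .
\]
With these definitions, \eqref{3.7} reads
\[
dV^n=\Delta V^n\,dt+m[g]\,d\widetilde W^0+h_n(V^n)\,dW^1=\Delta V^n\,dt+g_n(V^n)\,d\widetilde W ,
\]
because $m[g]^2+h_n^2=g_n^2$. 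Both coefficient pairs are bounded and predictable, since $g_n\ge c_g/2$ and $K_\delta,S_\delta$ have operator norm at most $1$ by Lemma \ref{lemmas3.1}. Hence the stochastic integrals make sense, first tested against $\phi\in H$ and then in mild form through $S_{t-r}$.

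Next I verify that $\widetilde W^0$ and $\widetilde W$ are cylindrical Wiener processes in the enlarged filtration, following the pattern of Lemma \ref{lemmas2.1}. For $\phi,\psi\in H$, the independence of $W^0$ and $\overline W^0$ gives
\[
d[\widetilde W^0(\phi),\widetilde W^0(\psi)]=\big\langle (K_\delta K_\delta^*+S_\delta S_\delta^*)\phi,\psi\big\rangle\,dt=\langle\phi,\psi\rangle\,dt
\]
by Lemma \ref{equalitytwomaps}. The processes $W^0,\overline W^0$ are independent of $W^1$, so $d[\widetilde W^0(\phi),W^1(\psi)]=0$. Lévy's characterization, applied to finite families of test functions, then shows that $(\widetilde W^0,W^1)$ is a pair of independent cylindrical Wiener processes. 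Applying Lemma \ref{lemmas2.1} once more, with $a=m[g]/g_n(V^n)$ and $b=h_n(V^n)/g_n(V^n)$ (which satisfy $a^2+b^2=1$), shows that $\widetilde W$ is a cylindrical Wiener process. The same bracket computation confirms this directly.

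Consequently, $(V^n,\widetilde W)$ is a weak mild solution of \eqref{unequation} with $V^n(0)=U(0)=u_0$. The $H$-continuity and moment bound come from Lemma \ref{lemmas3.3}. Since $g_n$ is globally Lipschitz, \eqref{unequation} has pathwise uniqueness by Picard iteration \cite{da2014stochastic}. Yamada–Watanabe then gives uniqueness in law, so the law of $V^n$ is the canonical law $P_n$, which depends only on $g_n$ and $u_0$ and not on $U$.

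The main obstacle is technical rather than conceptual: $\widetilde W^0$ uses the $V^n$-dependent operator $K_\delta(U-V^n)$, which is identity plus rank one and not Hilbert–Schmidt. One therefore has to make sure that the cylindrical integrals are well defined and that the rewriting of the mild formulation is legitimate. The first thing I would try is to work componentwise in the Fourier basis $\{e_k\}$: apply the bracket identities to each $\langle\widetilde W,e_k\rangle$, and recover the mild equation from the Hilbert–Schmidt bounds \eqref{3.8f} and \eqref{3.8i}. A second point to check is that uniqueness in law for \eqref{unequation} holds with an arbitrary filtration that carries $\widetilde W$ as a Wiener process. This follows from the Yamada–Watanabe theorem, since strong existence and pathwise uniqueness hold for Lipschitz $g_n$.
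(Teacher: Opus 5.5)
Your proposal is correct and follows the paper's proof essentially step for step. You introduce the same noise $\widetilde W^0$ built from $K_\delta\,dW^0+S_\delta\,d\overline W^0$, and you use Lemma~\ref{equalitytwomaps} with L\'evy's characterization to show that $(\widetilde W^0,W^1)$ are independent cylindrical Wiener processes. You then recombine them with the coefficients $m[g]/g_n(V^n)$ and $h_n(V^n)/g_n(V^n)$ into one white noise driving the $g_n$-equation, and conclude by pathwise uniqueness (Yamada--Watanabe) for the Lipschitz equation \eqref{unequation}; your remarks on handling the non-Hilbert--Schmidt integrand $K_\delta$ through the smoothing $S_{t-r}$ fill in details the paper leaves implicit.
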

\begin{proof}

Denote by $\widetilde W^{0,n}$ the noise process associated with $V^n$:

\begin{equation}
d\widetilde W^{0,n}
=K_\delta(U-V^n)dW^0+S_\delta(U-V^n)d\overline W^0.
\label{3.9}
\end{equation}

For \(\phi,\psi\in H\), by Lemma \ref{equalitytwomaps},

\begin{equation}
\begin{aligned}
d[\widetilde W^{0,n}(\phi),
\widetilde W^{0,n}(\psi)]_t
&=\langle\phi,(K_\delta K_\delta^*
+S_\delta S_\delta^*)\psi\rangle dt\\
&=\langle\phi,\psi\rangle dt.
\end{aligned}
\label{3.9a}
\end{equation}
Its cross-bracket with \(W^1\) is zero because \(W^0,\overline W^0\)
have zero cross-variation with \(W^1\).  This is still true even though
the predictable integrands depend on \(W^1\).  For each finite family
of spatial test functions, the joint process is therefore a continuous
local martingale with deterministic Brownian covariance.  The
finite-dimensional Lévy characterization proves that
\((\widetilde W^{0,n},W^1)\) is a pair of independent cylindrical white
noises in the full joint filtration.  Therefore, the marginal equation for
\(V^n\) has covariance
$
m[g]^2+h_n(V^n)^2=g_n(V^n)^2.
$ Explicitly,
$
d\widehat W^n
=\frac{m[g]}{g_n(V^n)}d\widetilde W^{0,n}
+\frac{h_n(V^n)}{g_n(V^n)}dW^1
$
is a cylindrical white noise by the same finite-dimensional bracket
calculation.

Thus \(V^n\) is a weak solution of the globally Lipschitz
\(g_n\)-equation \eqref{unequation}.  Pathwise uniqueness for \eqref{unequation} gives uniqueness in law and
hence the equality in law of \eqref{3.8jj}.
\end{proof}

\section{Energy estimates, Itô formula and the drift}
We will do a contraction estimate between the candidate weak solution $U$ and the  approximation process $V^n$. We write their differences as follows:
(the value $s\in(0,1)$ will be fixed later)
\begin{equation}
D(t)=U(t)-V^n(t),
\qquad
r_s(t)=\|D(t)\|_{H^{-s}},
\qquad
Z(t)=\|D(t)\|_2^2,
\qquad
\mathcal A_s(t)=\|D(t)\|_{H^{1-s}}^2,
\label{4.1}
\end{equation}
where $D(t),U(t),V^n(t)$ are the time-$t$ values of the process $D,U,V^n$.

 Since the evolution equation has better regularity in the negative Sobolev space $H^{-s}$, we rewrite the equation and do the comparison there. Let \(\iota:L^2\to H^{-s}\) be the canonical
inclusion, then the difference equation of $U$ and $V^n$ in \(H^{-s}\) can be written as
\begin{equation}
dD(t)=\Delta D(t)\,dt+B_0(D(t))dW^0+B_\perp(D(t))d\overline W^0
+B_{F(t)}dW^1,
\label{4.1a}
\end{equation}
where
\begin{equation}\begin{aligned}
&B_0(D(t))=m[g]\iota(I-K_\delta(D(t))),
\qquad B_\perp(D(t))=-m[g]\iota S_\delta(D(t)),
\\& B_{F(t)}=\iota M_{F(t)},
\qquad F(t)=h(U(t))-h_n(V^n(t)).\end{aligned}
\label{4.1b}
\end{equation}
Here the coefficients $h,h_n$ have linear growth under the hypotheses of Theorem \ref{theorem1.11} and $U(t),V^n(t)\in H$, so $F\in H$ and \(\iota_sM_F\) is well-defined from $H$ to $H^{-s}$.

We prove the following regularity estimate for $B_F$ in the space $H^{-s}$:
\begin{lemma}[$L^2$ difference of diffusion coefficient and interpolation estimates]\label{lemmas4.111}

For each \(s>1/2\), we can find a constant $C_{s,g}>0$ depending only on $s$ and the function $g$ such that 

\begin{equation}
\|B_{F(t)}\|_{\mathcal{L}_2(H,H^{-s})}^2
=\|A^{-s/2}M_{F(t)}\|_{\mathrm{HS}}^2
\le C_{s,g} Z(t)^\beta+C_{s,g}\varepsilon_n^2,
\label{4.2}
\end{equation}
where $\mathcal{L}_2(H,H^{-s})$ is the space of Hilbert-Schmidt operators from $H$ to $H^{-s}$
and

\begin{equation}
Z(t)\le r_s^{2(1-s)}(t)\mathcal A_s^s(t),\quad \forall t\geq 0.
\label{4.3}
\end{equation}

\end{lemma}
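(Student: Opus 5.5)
The plan is to prove \eqref{4.2} by computing the Hilbert--Schmidt norm in the Fourier basis $\{e_k\}$ and reducing it to $\|F(t)\|_2^2$, then to use the Hölder bound \eqref{2.2a} on $h$ together with $\|h_n-h\|_\infty=\varepsilon_n$. The interpolation inequality \eqref{4.3} is a separate Hölder inequality on Fourier coefficients.

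\emph{Step 1: Hilbert--Schmidt norm.} Since the Hilbert--Schmidt norm is invariant under taking adjoints, and $M_F$ is (formally) self-adjoint,
\[
\|A^{-s/2}M_F\|_{\mathrm{HS}}^2=\|M_FA^{-s/2}\|_{\mathrm{HS}}^2=\sum_{k\in\mathbb Z}(1+4\pi^2k^2)^{-s}\|Fe_k\|_2^2 .
\]
Here I use the basis $\{e_k\}$, which diagonalizes $A$. Because $|e_k(x)|^2\le 2$ pointwise, each term satisfies $\|Fe_k\|_2^2\le 2\|F\|_2^2$. This gives
\[
\|B_F\|_{\mathcal L_2(H,H^{-s})}^2\le 2\operatorname{Tr}(A^{-s})\|F\|_2^2 .
\]
The trace $\operatorname{Tr}(A^{-s})$ is finite precisely because $s>1/2$. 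If one prefers to avoid the formal adjoint when $F$ is only in $L^2$, the same identity follows by applying the computation to truncations $F\mathbf 1_{|F|\le N}$ and letting $N\to\infty$ by monotone convergence.

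\emph{Step 2: bounding $\|F\|_2^2$.} I split $F=h(U)-h_n(V^n)$ as
\[
F=\bigl(h(U)-h(V^n)\bigr)+\bigl(h(V^n)-h_n(V^n)\bigr).
\]
By \eqref{2.2a}, the first piece is pointwise bounded by $C_{\text{Hol},g}|D|^\beta$. The second piece is bounded by $\varepsilon_n$. Hence
\[
\|F\|_2^2\le 2C_{\text{Hol},g}^2\int_{\mathbb T}|D|^{2\beta}\,dx+2\varepsilon_n^2 .
\]
Since $\mathbb T$ has unit measure and $\beta<1$, Jensen's inequality (concavity of $x\mapsto x^\beta$) gives $\int|D|^{2\beta}\le(\int|D|^2)^\beta=Z^\beta$. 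Combining with Step 1 yields \eqref{4.2}, with $C_{s,g}=4\operatorname{Tr}(A^{-s})\max(C_{\text{Hol},g}^2,1)$.

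\emph{Step 3: interpolation \eqref{4.3}.} Write $D=\sum d_ke_k$ and $\lambda_k=1+4\pi^2k^2$. Then
\[
Z=\sum_k|d_k|^2=\sum_k\bigl(\lambda_k^{-s}|d_k|^2\bigr)^{1-s}\bigl(\lambda_k^{1-s}|d_k|^2\bigr)^{s},
\]
because the powers of $\lambda_k$ cancel: $-s(1-s)+(1-s)s=0$. Hölder's inequality with exponents $1/(1-s)$ and $1/s$ then gives
\[
Z\le\Bigl(\sum_k\lambda_k^{-s}|d_k|^2\Bigr)^{1-s}\Bigl(\sum_k\lambda_k^{1-s}|d_k|^2\Bigr)^{s}=r_s^{2(1-s)}\mathcal A_s^s .
\]
If $\mathcal A_s(t)=\infty$, the inequality is trivial.

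The only mildly delicate point is Step 1 when $F$ is merely in $L^2$, so that $M_F$ is unbounded on $H$. This is handled by the truncation argument, or directly by noting that $\|A^{-s/2}(F\phi)\|_2$ is controlled by the embedding $L^1\subset H^{-s}$ described in the preliminaries. Everything else is routine.
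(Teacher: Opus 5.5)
Your proof is correct and follows essentially the same route as the paper. Your adjoint step reproduces the paper's Parseval--Tonelli interchange $\sum_k\|A^{-s/2}(Fe_k)\|_2^2=\sum_k\lambda_k^{-s}\|Fe_k\|_2^2$; the paper then sums exactly to $\int_{\mathbb T}K_s(x,x)|F|^2\,dx=\kappa_s\|F\|_2^2$ (pairing $e_k$ with $e_{-k}$), whereas your pointwise bound $|e_k|^2\le 2$ loses only a harmless factor $2$. The Hölder/Jensen bound on $\|F\|_2^2$ and the Fourier-side Hölder interpolation are as in the paper, and, as there, the exponents $1/(1-s)$ and $1/s$ in the last step tacitly use the standing restriction $s\in(1/2,1)$.
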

\begin{proof}
Let $\{e_k\}_{k\in\mathbb{Z}}$ be a real orthonormal basis with $e_0=1$, $e_k(\cdot)=\sqrt2\cos(2\pi k\cdot),k\in\mathbb{N}_+$ and $e_k(\cdot)=\sqrt2\sin(2\pi k\cdot),k\in\mathbb{N}_{-}$ and let \(K_s(x,y)\) be the kernel
of \(Q:=A^{-s}\).  Parseval and Tonelli's theorem give

\begin{equation}
\begin{aligned}
\|B_{F(t)}\|^2_{\mathcal{L}_2(H,H^{-s})}=\sum_k
\|Q^{1/2}M_{F(t)}e_k\|_2^2
&=\sum_k\|Q^{1/2}(F(t)e_k)\|_2^2\\
&=\int_{\mathbb T}K_s(x,x)|F(t)(x)|^2dx.
\end{aligned}
\label{4.4}
\end{equation}

On the torus,

\begin{equation}
K_s(x,x)=\sum_{k\in\mathbb Z}
\big(1+4\pi^2k^2\big)^{-s}=:\kappa_s<\infty
\quad\Longleftrightarrow\quad s>\frac12.
\label{4.5}
\end{equation}
(There is a more transparent way of deriving this on $\mathbb{T}$: set the complex Fourier basis $\widehat{e}_k(x)=e^{2\pi ikx}$ and write $\widehat{F}(j)$ the Fourier coefficient of $F$, then $\widehat{F\widehat{e}_k}(j)=\widehat{F}(j-k)$ and $\sum_k\|A^{-s/2}(F\widehat{e}_k)\|_2^2=\sum_k\sum_j\lambda_j^{-s}|\widehat{F}(j-k)|^2$).
Since \(h\in C^\beta\) and
\(\|h-h_n\|_\infty\leq\varepsilon_n\), the norm of $F(t)$ can be bounded by

\begin{equation}
\begin{aligned}
\|F(t)\|_2^2
&\le2\|h(U(t))-h(V^n(t))\|_2^2+2\varepsilon_n^2\\
&\le C_{\text{Hol,g}}\int_{\mathbb T}|D(t)(x)|^{2\beta}dx
+2\varepsilon_n^2\\
&\le C_{\text{Hol,g}}\|D(t)\|_2^{2\beta}+2\varepsilon_n^2.
\end{aligned}
\label{4.6}
\end{equation}
The last step uses \(2\beta<2\) and that $\mathbb{T}$ has unit volume.

Finally, with
\(\lambda_k=1+4\pi^2k^2\), let $\{D_k\}$ be the coefficient of $D(t)$ with respect to the complex Fourier basis $\{\widehat{e}_k\}$, then Hölder's inequality for the sum $\|D(t)\|_2^2=\sum_k|D_k|^2$ gives

\begin{equation}
\begin{aligned}
Z(t)
&=\sum_k
\big(\lambda_k^{-s}|D_k|^2\big)^{1-s}
\big(\lambda_k^{1-s}|D_k|^2\big)^s\\
&\le
\left(\sum_k\lambda_k^{-s}|D_k|^2\right)^{1-s}
\left(\sum_k\lambda_k^{1-s}|D_k|^2\right)^s,
\end{aligned}
\label{4.7}
\end{equation}
which gives estimate \eqref{4.3} by the definition of $r_s(t),\mathcal{A}_s(t)$.
\end{proof}

We next show that the difference process $D$ lies in the Sobolev space $H^{1-s}$ and has the following spatial-temporal continuity, so that Itô's formula can be directly applied:
\begin{lemma}[Energy estimates and Itô identity]\label{lemma4.237}

For every \(T<\infty\) and every $s\in(\frac{1}{2},1)$,

\begin{equation}
D\in L^2(\Omega;C([0,T];H^{-s}))
\cap L^2(\Omega\times[0,T];H^{1-s}),
\label{4.8}
\end{equation}indeed we have the following more standard $L^2$-estimate 
\begin{equation}
D\in L^2(\Omega;C([0,T];H))
.
\label{stronger4.8}
\end{equation}
The variational Itô formula can be formally applied to
\((r_s(t)^2+\delta^2)^{p/2}\) for any $\delta>0,p\in(0,1)$ and we obtain an upper bound \eqref{4.22} on the drift after this application of Itô formula. 

\end{lemma}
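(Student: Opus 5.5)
We will prove the regularity claims separately for $U$ and for $V^n$, since $D=U-V^n$. The Itô statement then follows from the difference equation \eqref{4.1a}.

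\textbf{Step 1: $L^2$ energy estimate for $U$.} Write $U(t)=S_tu_0+\Gamma(t)$ with $\Gamma(t)=\int_0^tS_{t-r}M_{g(U(r))}dW_r$. By \eqref{3.8f} and the linear growth $|g(z)|\le C(1+|z|)$,
\[
\mathbb E\|\Gamma(t)\|_2^2\le C\int_0^t (t-r)^{-1/2}\bigl(1+\mathbb E\|U(r)\|_2^2\bigr)dr .
\]
A singular Gronwall lemma then bounds $\sup_{t\le T}\mathbb E\|U(t)\|_2^2$. We first localize with stopping times, since a priori only \eqref{1.5} is available.

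To move the supremum inside the expectation we use the factorization method of \cite[Section 5]{da2014stochastic}. Write
\[
\Gamma=\frac{\sin\pi\alpha}{\pi}\int_0^t (t-\sigma)^{\alpha-1}S_{t-\sigma}Y_\alpha(\sigma)\,d\sigma,\qquad Y_\alpha(\sigma)=\int_0^\sigma(\sigma-r)^{-\alpha}S_{\sigma-r}M_{g(U)}dW .
\]
Take $\alpha<1/4$ and some $q>1/\alpha$. Then $\int_0^\sigma(\sigma-r)^{-2\alpha-1/2}dr<\infty$, and BDG moment bounds on $Y_\alpha$ give continuity of $\Gamma$ in $H$ together with $\mathbb E\sup_{t\le T}\|\Gamma(t)\|_2^2<\infty$. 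This requires $L^q$ moments of $U$, which we obtain by the same Gronwall argument at level $q$. Alternatively, we only need the second moment if we use the maximal inequality for stochastic convolutions of contraction semigroups (Kotelenez).

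For $V^n$ the same argument applies. Its coefficients are $m[g]K_\delta$ and $m[g]S_\delta$, which have operator norm at most $1$ by \eqref{3.8i}, together with $h_n$, which has linear growth uniformly in $n$. Combining the two gives \eqref{stronger4.8}, and \eqref{stronger4.8} implies $D\in L^2(\Omega;C([0,T];H^{-s}))$ because $H\hookrightarrow H^{-s}$.

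\textbf{Step 2: the $H^{1-s}$ bound.} We use the variational (Itô) energy identity in the Gelfand triple $H^{1-s}\subset H^{-s}\subset H^{-s-1}$ rather than the mild form. For the equation \eqref{4.1a}, Itô's formula for $\|D\|_{H^{-s}}^2$, justified by Galerkin truncation onto the Fourier modes $|k|\le N$, reads
\[
d r_s^2=-2\|D\|_{H^{1-s}}^2dt+2\|D\|_{H^{-s}}^2dt+\Bigl(m[g]^2\|I-K_\delta\|^2_{\mathcal L_2(H,H^{-s})}+m[g]^2\|S_\delta\|^2_{\mathcal L_2(H,H^{-s})}+\|B_F\|^2_{\mathcal L_2(H,H^{-s})}\Bigr)dt+dM_t .
\]
The identity $\langle\Delta D,D\rangle_{H^{-s}}=-\|D\|_{H^{1-s}}^2+\|D\|_{H^{-s}}^2$ produces the first two terms. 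The rank-one operators in the bracket have Hilbert--Schmidt norm at most $2$. By \eqref{4.2} and Step 1, $\|B_F\|^2_{\mathcal L_2(H,H^{-s})}$ is integrable. Taking expectations at the truncated level and letting $N\to\infty$ by monotone convergence gives $\mathbb E\int_0^T\mathcal A_s\,dt<\infty$. This proves \eqref{4.8}.

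\textbf{Step 3: the Itô formula for $f(D)=(r_s^2+\delta^2)^{p/2}$.} The function $f$ is $C^2$ on $H^{-s}$ with bounded first and second Fréchet derivatives for fixed $\delta>0$, since $\delta$ keeps it away from the singularity at $r_s=0$. Together with Steps 1--2, this puts us in the setting of the Krylov--Rozovskii Itô formula (or Galerkin approximation followed by a passage to the limit). We compute $f'=p(r_s^2+\delta^2)^{p/2-1}\langle D,\cdot\rangle_{H^{-s}}$ and $f''$, which contains the negative radial term $p(p-2)(r_s^2+\delta^2)^{p/2-2}\langle D,\cdot\rangle^2_{H^{-s}}$. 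The drift is then
\[
p(r_s^2+\delta^2)^{p/2-1}\Bigl(-\mathcal A_s+r_s^2+\tfrac12\,\mathrm{Tr}_{H^{-s}}\Bigr)+\tfrac{p(p-2)}2(r_s^2+\delta^2)^{p/2-2}\sum_{j}\langle D,B_jB_j^*D\rangle_{H^{-s}} .
\]
Here the $B_j$ run over the three noise coefficients in \eqref{4.1a}. We bound the trace term by \eqref{4.2}. This expression is the bound \eqref{4.22}.

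\textbf{Main obstacle.} The delicate point is justifying the energy identity when $D$ is only known to be in $H$ a priori. The difficulty is the unbounded Laplacian term, and the fact that $M_F$ is not Hilbert--Schmidt into $H$ but only into $H^{-s}$. The Galerkin truncation, with monotone limits for $\mathcal A_s$ and dominated limits elsewhere using \eqref{stronger4.8}, is where care is needed.
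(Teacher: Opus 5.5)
Your Steps 1--2 are essentially sound, and Step 2 takes a genuinely different route from the paper.

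\textbf{Step 2 compared with the paper.} The paper obtains $D\in L^2(\Omega\times[0,T];H^{1-s})$ from the mild form. It uses It\^o's isometry and the explicit kernel bound $\int_0^T\sum_k\lambda_k^{1-s}e^{-2(\lambda_k-1)u}\,du<\infty$, which is finite exactly when $s>1/2$, applied to $U$ and $V^n$ separately. Only afterwards does it feed this into the variational It\^o formula. You instead apply finite-dimensional It\^o to the Galerkin truncation $\Pi_N D$ and read off the $H^{1-s}$ bound from the dissipation term by monotone convergence. This has the merit of not presupposing the $H^{1-s}$ regularity that Krylov--Rozovskii requires as input. You do need to localize, or use your fourth moments, so that the truncated martingale has mean zero. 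Separately, your aside about Kotelenez's maximal inequality does not work in $H$: $M_{g(U)}$ is not Hilbert--Schmidt on $H$, so that inequality only yields a supremum bound in $H^{-s}$.

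\textbf{The gap is at the end of Step 3.} Your exact drift formula is correct, but ``bound the trace term by \eqref{4.2}'' does not lead to \eqref{4.22}.
\begin{itemize}
\item \eqref{4.2} controls only $\|B_F\|^2_{\mathcal L_2(H,H^{-s})}$. The trace also contains $T_{B_0}+T_{B_\perp}=q_0$ from \eqref{4.16}, which \eqref{4.2} says nothing about.
\item Your Step 2 remark that these rank-one operators have Hilbert--Schmidt norm at most $2$ suggests bounding them crudely. That leaves a positive term $\frac p2(r_s^2+\delta^2)^{p/2-1}q_0$. This term is of the same size as the negative term $\frac12 f_\delta''(r_s)q_0$ that \eqref{4.22} retains.
\item That signed term is the entire reflection mechanism. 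It is exactly what Lemma~\ref{lemma6.1} needs, via \eqref{6.5}, to absorb the H\"older term in the outer region.
\end{itemize}

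The missing computation is as follows. For $B=a\,\iota P_D$ one has $T_B=a^2r_s^2/Z$ and $\|B^*D\|_H^2=a^2r_s^4/Z$. So the trace equals the radial variance, and the first-order part $\frac{f_\delta'}{r_s}(T_B-a_B)$ in \eqref{4.13} cancels exactly. Summing $a^2=m[g]^2(1-\cos\theta_\delta)^2$ and $m[g]^2\sin^2\theta_\delta$ then turns the $B_0,B_\perp$ contributions of your formula into $\frac p2(r_s^2+\delta^2)^{p/2-2}(\delta^2+(p-1)r_s^2)\,q_0=\frac12 f_\delta''(r_s)q_0$.

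Two further steps are also needed. Drop the radial $B_F$ term, which is nonpositive since $p<2$. Then use \eqref{4.3} to convert $Z^\beta$ into $r_s^{2\beta(1-s)}\mathcal A_s^{\beta s}$.
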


The main component of this lemma is twofold: first, we prove the required functional space estimates for the difference process $D$ via deriving energy estimates; and second, we formally compute the drift after applying Itô formula to $D(t)$. The rigorous verification that Itô  can be rigorously applied to $f_\delta(D(t))$ is presented in Lemma \ref{lemmas7.1}.

\begin{proof}
In deriving energy estimates, we make crucial use of the fact that $g$, $g_n$ and $h_n$ have linear growth: since each of them is $\beta$-Hölder continuous, we have that
$$
|g(x)|\leq |g(0)|+[g]_{C^\beta}(1+|x|),\quad x\in\mathbb{R}
,$$
and similarly for $g_n,h_n$.

\textbf{Background: variational Itô formulas for SPDEs and Gelfand triples.}
We shall work with the following Gelfand triple

\begin{equation}
H^{1-s}\subset H^{-s}\subset H^{-1-s},
\label{4.9}
\end{equation}
in the sense that $H^{1-s}$ embeds continuously and densely into $H^{-s}$, and regarding $H^{-s}$ as the central Hilbert space, it also embeds continuously and densely into $H^{-1-s}=(H^{1-s})^*$ where the dual is defined with respect to the norm of $H^{-s}$.

Recall that for any such Gelfand triple $V\subset \widehat{H}\subset V^*$, suppose an adapted process $X$ satisfies, in $V^*$,
\begin{equation}\label{integralformulation}
X_t=X_0+\int_0^t a_rdr+\int_0^tb_rdW_r,
\end{equation}where $X\in L^2(\Omega\times[0,T];V)$ and $a\in L^2(\Omega\times[0,T];V^*)$ and the noise process is $b\in L^2(\Omega\times[0,T];\mathcal{L}_2(\mathbb{U},\widehat{H}))$ \footnote{Here $\mathcal{L}_2(\mathbb{U},\widehat{H})$ is the linear space of Hilbert-Schmidt operator from $\mathbb{U}$ to $\widehat{H}$.} and $\mathbb{U}$ is the noise space, then the variational Itô formula in \cite[Theorem 4.2.5]{PrevotRockner2007} implies that $X$ has a $\widehat{H}$-continuous version and we have
\begin{equation}\label{quadraticitos}
    \begin{aligned}
\|X_t\|_{\widehat{H}}^2&=\|X_0\|_{\widehat{H}}^2+2\int_0^t \langle a_r,X_r\rangle_{V^*,V}dr\\&+\int_0^t \|b_r\|^2_{\mathcal{L}_2(\mathbb{U};\widehat{H})}dr+    2\int_0^t\langle X_r,b_rdW_r\rangle_{\widehat{H}}.    
    \end{aligned}
\end{equation}
For the rest of the proof we take $\mathbb{U}=H\oplus H\oplus H$ and $\widehat{H}=H^{-s}$, $V=H^{1-s}$.

\textbf{Step 1: $L^2$ energy estimates.} We first verify that 
$$U,V^n\in L^2(\Omega;C([0,T];H))
.$$
Since $H$ embeds into $H^{-s}$, this immediately gives $D\in L^2(\Omega;C([0,T];H^{-s})).$

We perform an a priori estimate for any weak mild
solution $U$ and does not use uniqueness. Since $g$ has linear growth, the heat kernel smoothing estimate in \eqref{3.8f} gives,
$$\|S_tM_{g(U(r))}\|_{\mathcal L_2(H)}^2
 \lesssim t^{-1/2}(1+\|U(r)\|_H^2)
$$up to a constant depending only on $[g]_{C^\beta}$. This term $t^{-1/2}$ is integrable on $[0,T]$.
Applying the stochastic factorization formula and the deterministic
factorization estimate
in \cite[Section~5.3, Proposition~5.9 and
Theorem~5.10]{da2014stochastic}, first up to the stopping time
$\tau_R=\inf\{t:\|U_t\|_H\ge R\}$, gives for some $p>4$
\[
 \mathbb E\sup_{t\le T\wedge\tau_R}\|U(t)\|_H^p
 \le C_{p,T,g}(1+\|U(0)\|_H^p),
\]
where the constant is independent of $R$. Letting $R\to\infty$ and
using Fatou's lemma proves
\begin{equation}\label{usigfatou}
 \mathbb E\sup_{t\le T}\|U(t)\|_H^2
 \le C_{T,g}(1+\|U(0)\|_H^2).
\end{equation}
The same estimate holds for $V^n$ in place of $U$, and the leading numerical coefficient $C_{T,g_n}$ can be made uniformly bounded for $g_n$ since $[g_n]_{C^\beta}\leq 2[g]_{C^\beta}$.

Moreover, the standard factorization estimates in \cite[Section 5]{da2014stochastic} give continuity of $U$, so we have $$
 U\in L^2\bigl(\Omega;C([0,T];H)\bigr),
 \qquad
 \mathbb E\|U\|_{C([0,T];H)}^2
 \le C_{T,g}(1+\|u_0\|_H^2),
$$
and the same also holds for $V^n$.

We extract one useful estimate here: following the computation in \eqref{4.4},
we can find $C_{T,g}>0$ depending only on $T,g,s$ such that for all $n$,
\begin{equation}\label{lines8560}
\mathbb{E}{\sup_{ t\leq T}}\|\iota_sM_{h(U(t))}\|_{\mathcal{L}_2(H,H^{-s})}^2\leq C_{T,g},\quad \mathbb{E}{\sup_{ t\leq T}}\|\iota_s M_{h_n(V^n(t))}\|_{\mathcal{L}_2(H,H^{-s})}^2\leq C_{T,g}.
\end{equation}

\textbf{Step 2: Integrability in the norm $H^{1-s}$.}
We then obtain the \(H^{1-s}\) regularity without using variational
Itô.

Denote by 
$$
q_s(u):=\sum_{k\in\mathbb{Z}}\lambda_k^{1-s}e^{-2(\lambda_k-1)u},
$$where $\lambda_k=1+4\pi^2k^2,k\in\mathbb{Z}$ are eigenvalues of $A=I-\Delta$,
then by translational invariance, 
\begin{equation}\label{lines869}
\|A^{(1-s)/2}S_uM_f\|_{HS}^2=q_s(u)\|f\|_H^2,
\end{equation}
and that for any $s>\frac{1}{2}$,
\begin{equation}\label{lines870}
\int_0^T q_s(u)du\leq T+C\sum_{k\neq 0}\frac{\lambda_k^{1-s}}{\lambda_k-1}\leq C_{s,T}<\infty.
\end{equation}

Then Itô's isometry and Tonelli inequality gives, with $f(r):=h(U(r))$ or $f(r):=h_n(V^n(r))$,

\begin{equation}
\begin{aligned}
&\mathbb E\int_0^T\left\|
A^{(1-s)/2}\int_0^tS_{t-r}M_{f(r)}\,dW_r
\right\|_2^2dt\\
&\quad=\mathbb E\int_0^T\int_0^t
\|A^{(1-s)/2}S_{t-r}M_{f(r)}\|_{\mathrm{HS}}^2drdt\\
&\quad\le C_{s,T}C_{g}\mathbb{E}\sup_{t\leq T}(1+\|U(t)\|_H^2+\|V^n(t)\|_H^2)<\infty,
\end{aligned}
\label{4.9a}
\end{equation}
where in the last step we used \eqref{usigfatou}, \eqref{lines869}, \eqref{lines870} and the linear growth of $h,h_n$.

For the coefficients $K_\delta(D(r)),S_\delta(D(r))$, they are already bounded operators on $H$, so a similar but simpler estimate with $M_{f(r)}$ replaced by $K_\delta,S_\delta$ gives similar upper bounds.

The deterministic
term \(S_tu_0\) obeys the same integrated estimate.  Therefore, all the processes
\(U,V^n\), and \(D\) belong to \(L^2(\Omega\times[0,T];H^{1-s})\).

\textbf{Step 3: Validation for the application of variational Itô.}
We may now apply quadratic variational Itô formula \eqref{quadraticitos}. We have the following calculus: recall that $Q=A^{-s}$ and the $H^{-s}$-inner product is $\langle x,y\rangle_{H^{-s}}=\langle Qx,y\rangle_{L^2}$, then 
\begin{equation}
\begin{aligned}\langle& \Delta X,X\rangle_{H^{-1-s},H^{1-s}}=\langle \Delta X,QX\rangle_{H^{-1-s},H^{1+s}}
\\&=-\langle(1-\Delta)^{-s}X,(1-\Delta)X \rangle_{L^2}+\langle(1-\Delta)^{-s}X,X\rangle_{L^2}=-\|X\|_{H^{1-s}}^2+\|X\|_{H^{-s}}^2.
\label{4.11}
\end{aligned}\end{equation} Here as we have verified that $X\in L^2(\Omega\times[0,T];H^{1-s}),$ then $\Delta X\in L^2(\Omega\times[0,T];H^{-1-s})$.
Moreover, the diffusion coefficients in \eqref{4.1a} satisfy
\[
 \mathbb E\int_0^T
 \sum_{j\in\{0,\perp,F\}}
 \|B_j(t)\|_{\mathcal L_2(H,H^{-s})}^2dt<\infty.
\]
Indeed, the reflection coefficients are uniformly Hilbert--Schmidt
from $H$ to $H^{-s}$, while the estimate for $B_F(t)$ is established in \eqref{lines8560}. From the mild formulation \eqref{4.1a}, we can test it by smooth test functions to turn it into an integral equation \eqref{integralformulation} in $H^{-1-s}$ via stochastic Fubini.

Consequently, we checked that the mild formulation \eqref{4.1a} can be written as a variational equation for the Gelfand
triple
\[
 H^{1-s}\subset H^{-s}\subset H^{-1-s}.
\]
The quadratic variational Itô formula can now be applied to
$\|D(t)\|_{H^{-s}}^2$. The localization and integrability needed
below when taking expectations are verified later in
Lemma~\ref{lemmas7.1}.

\textbf{Step 4: Itô applied to function.}
Let \(R_s(t)=r_s(t)^2=\langle QD(t),D(t)\rangle_{L^2}\). Denote by $W^\perp=\overline{W}^0$ and $W^F=W^1$, then the quadratic energy identity applied to \eqref{4.1a} is

\begin{equation}
\begin{aligned}
dR_s(t)={}&2(-\mathcal A_s(t)+r_s(t)^2)dt
+\sum_{j\in\{0,\perp,F\}}
\|B_j\|_{\mathcal{L}_2(H,H^{-s})}^2dt\\
&+2\sum_{j\in\{0,\perp,F\}}
\langle D(t),B_j(t)dW^j\rangle_{H^{-s}}.
\end{aligned}
\label{4.12}
\end{equation}
where we used \eqref{4.11} inside the variational Itô formula, and the quadratic variation satisfies 
$$
d[ R_s]_t=4\sum_{j\in\{0,\perp,F\}}\|B_j(t)^*D(t)\|_{H^{}}^2dt
$$(note that $B_j^*$ maps $H^{-s}$ to $H$),
For fixed \(\delta>0\), $p\in(0,1)$, the scalar function
$$\Phi_\delta(R)=(R+\delta^2)^{p/2}$$ is \(C^2\) on
\([0,\infty)\), and its composition with the squared Hilbert norm has
bounded first and second Fréchet derivatives on \(H^{-s}\).  Hence
ordinary scalar Itô formula can be (formally) applied to \eqref{4.12}. Applying Itô to $\Phi_\delta(R_s(t))$, we get 
$$
d\Phi_\delta(R_s(t))=\Phi_\delta'(R_s(t))dR_s(t)+\frac{1}{2}\Phi_\delta''(R_s(t))d\langle R_s\rangle_t.
$$ The drift $\mathfrak{b}(t)$ of $d\Phi_\delta(R_s(t))$ at time $t$ is then given by 
\begin{equation}\begin{aligned}
\mathfrak{b}(t)=&2\Phi_\delta'(R_s(t))(-\mathcal{A}_s(t)+r_s(t)^2)\\&+\sum_{j\in\{0,\perp,F\}}[\Phi_\delta'(R_s(t))\|B_j(t)\|^2_{\mathcal{L}_2(H,H^{-s})}+2\Phi_\delta''(R_s(t))\|B_j(t)^*D(t)\|_{H^{}}^2].\end{aligned}
\end{equation}
We then estimate the quadratic variation. For any noise operator \(B\in \mathcal{L}_2(H,H^{-s})\), we denote by 
$$
T_B:=\|B\|_{\mathcal{L}_2(H,H^{-s})}^2,
$$and define its radial variance in the $D$-direction via, whenever $r_s(t)\neq 0$ (equivalently when $D\neq 0$)
$$
a_B(t):=\frac{\|B^*D(t)\|_{H^{}}^2}{r_s(t)^2}
.$$One can easily check that $a_B(t)\leq T_B$ for any $B$.
We then switch to the function 
\begin{equation}
f_\delta(r)=(r^2+\delta^2)^{p/2},
\qquad 0<p<1,
\label{4.17}
\end{equation}
so that $f_\delta(r)=\Phi_\delta(r^2)$. Then a simple calculation gives, for each $B$, 
\begin{equation}
\Phi_\delta'(R_s(t))T_B+2\Phi_\delta''(R_s(t))\|B^*D(t)\|_{H}^2=\frac{1}{2}f_\delta''(r_s(t))a_B(t)+\frac{1}{2}\frac{f_\delta'(r_s(t))}{r_s(t)}(T_B-a_B(t)).\label{4.13}
\end{equation}
Although this formula is written for \(D\neq 0\), one can check that it is still valid at $D=0$ and has the value $\frac{p}{2}\delta^{p-2}\|B\|^2_{\mathcal{L}_2(H,H^{-s})}$

\textbf{Step 5: Evaluating the noise term and concavity estimates.}
We next evaluate the noise terms.  Let $\iota:H\to H^{-s}$ be the natural injective map.

\textbf{The two rank-one projections.}
Denote by $e=e(t)=\frac{D(t)}{\|D(t)\|_2}$ so that $P_{D(t)}=e(t)\otimes e(t)$. Then $B_0(t)=m[g](1-c(t))\iota P_{D(t)}$ and $B_\perp(t)=-m[g]\sigma(t)\iota P_{D(t)}$, where $c(t)=\cos\theta_\delta(r_s(t))$ and $\sigma(t)=\sin\theta_\delta(r_s(t))$. We have that $\operatorname{Ran}B_0(t),\operatorname{Ran}B_\perp(t)\subset\operatorname{Span}\{D(t)\}$ so that this noise only acts on the direction of the difference process $U-V^n$, and we call this noise the radial noise.

For any $a\in\mathbb{R}$ and $B=a\iota P_{D(t)}$, we have that $Q^{1/2}P_{D(t)}=(Q^{1/2}e(t))\otimes e(t)$, so the rank-one Hilbert-Schmidt computations give that 
$$\|B\|^2_{\mathcal{L}_2(H,H^{-s})}=a^2
\|Q^{1/2}P_{D(t)}\|_{HS}^2=a^2\|e(t)\|_2^2\|Q^{1/2}e(t)\|_2^2=a^2\langle Qe(t),e(t)\rangle_2=\frac{a^2r_s(t)^2}{Z(t)}.
$$
The adjoint of $B=a\iota P_D:H\to H^{-s}$ is given as 
$
B^*z=aP_DQ,
$ so that 
$$
B^*D(t)=aP_{D(t)}QD(t)=a\langle QD(t),e(t)\rangle_2e(t)=a\frac{\langle QD(t),D(t)\rangle_2}{\sqrt{Z}(t)}e(t)=a\frac{r_s(t)^2}{\sqrt{Z}(t)}e(t).
$$Thus 
$$
\|B^*D(t)\|_H^2=a^2\frac{r_s(t)^4}{Z(t)}.
$$

Thus $T_{B_0(t)}=a_{B_0(t)}(t)$ and $T_{B_\perp(t)}=a_{B_\perp(t)}(t)$, and only the term involving $f_\delta''(r_s(t))$ remains in \eqref{4.13}, with coefficient
\begin{equation}
q_0(t)
=2m[g]^2(1-\cos\theta_\delta(r_s(t)))\frac{r_s(t)^2}{Z(t)}.
\label{4.16}
\end{equation}
Here and below \eqref{4.16} is invoked only for \(D\ne0\). If
\(r_s(t)=0\) (equivalently $D(t)=0\in H$), set \(a_B(t)=0\). Then the right-hand side of
\eqref{4.13} is
$
 \frac p2\delta^{p-2}T_B,
$
which agrees with the second-order Itô contribution at \(D(t)=0\).
Thus \eqref{4.13} extends to \(D(t)=0\) under these conventions.

\textbf{Derivative computation.} The derivatives of the function $f_\delta$ defined in \eqref{4.17} are

\begin{equation}
\frac{f_\delta'(r)}r
=p(r^2+\delta^2)^{p/2-1},
\label{4.18}
\end{equation}

and

\begin{equation}
f_\delta''(r)
=p(r^2+\delta^2)^{p/2-2}
\big(\delta^2+(p-1)r^2\big).
\label{4.19}
\end{equation}

Moreover, since $p\in(0,2)$,

\begin{equation}
f_\delta''(r)-\frac{f_\delta'(r)}r\le0.
\label{4.20}
\end{equation}

More explicitly, the left side of \eqref{4.20} is
\(p(p-2)r^2(r^2+\delta^2)^{p/2-2}\).  
Also, we have

\begin{equation}
f_\delta''(r)<0
\quad\Longleftrightarrow\quad
r>\frac{\delta}{\sqrt{1-p}}.
\label{4.21}
\end{equation}

\textbf{The residual part.}
The leading coefficient for $a_{B_F(t)}(t)$ (the
residual noise) in \eqref{4.13} is negative by \eqref{4.20} and can be discarded in an upper
bound. So the noise coefficient part $B_F(t)$ contribution is at most
$$
\frac{1}{2}\frac{f_\delta'(r_s(t))}{r_s(t)}T_{B_F(t)}
.$$
The term $T_{B_F(t)}$ is upper bounded by Lemma \ref{lemmas4.111} and we use the interpolation formula \eqref{4.3}.

The additive term is zero for \(r\le K_0\delta\).  Wherever the switch
is nonzero, \eqref{3.4} and \(K_0>2/\sqrt{1-p}\) imply \eqref{4.21}, so its entire
contribution is \(\frac12f_\delta''q_0\le0\).

\textbf{Conclusion.} Combining
\eqref{4.2}--\eqref{4.3}, \eqref{4.11}, and \eqref{4.13}--\eqref{4.21}, the predictable drift
\(\mathfrak b(t)\) in the Itô decomposition of \(f_\delta(r_s(t))\)
satisfies

\begin{equation}
\begin{aligned}
\mathfrak b(t)
\le{}&
-\frac{f_\delta'(r_s(t))}{r_s(t)}\mathcal A_s(t)
+\frac{f_\delta'(r_s(t))}{r_s(t)} r_s(t)^2\\
&+C\frac{f_\delta'(r_s(t))}{r_s(t)}
\big(r_s(t)^{2\beta(1-s)}\mathcal A_s(t)^{\beta s}
+\varepsilon_n^2\big)
+\frac12f_\delta''(r_s(t))q_0(t).
\end{aligned}
\label{4.22}
\end{equation}

\end{proof}

\section{Estimating the Itô drift}

The main objective of this section is to estimate the drift $\mathfrak{b}(t)$ in \eqref{4.22} in two different regions: very small $r_s(t)$ and slightly larger $r_s(t)$. We will also fix the correct parameter regime for $s,p$ for a valid estimate. The ultimate estimate we derive is \eqref{6.14}.

Choose, throughout the rest of the paper, a value of $s$ satisfying

\begin{equation}
\frac12<s<2-\frac1\beta.
\label{5.1}
\end{equation}
The interval is nonempty exactly when \(\beta>2/3\). Also, for
\(\beta<1\), one also has
\(2-1/\beta<\beta\), so the range \eqref{5.1} implies \(s<\beta\).

We denote by

\begin{equation}
q=\frac{\beta(1-s)}{1-\beta s}.
\label{5.2}
\end{equation}

Condition \eqref{5.1} is equivalent to \(q>1/2\).  Then choose a value of $p$ satisfying

\begin{equation}
2-2q<p<1.
\label{5.3}
\end{equation}

These choices are possible because

\begin{equation}
q>\frac12
\quad\Longleftrightarrow\quad
s<2-\frac1\beta,
\qquad
q<1\quad\Longleftrightarrow\quad\beta<1.
\label{5.3a}
\end{equation}

Here $s$ never takes the value of the two endpoints in \eqref{5.1}, and nor does $q$ and $p$ take endpoint values in their intervals.

\subsection{The inner region estimate}
We have the following estimate on the drift coefficient $\mathfrak{b}(t)$ in the regime where $r_s(t)$ is small. This lemma crucially gives the $\beta>\frac{2}{3}$ threshold, but does not strictly use non-degeneracy $m[g]>0$.
\begin{lemma}[inner-region estimate]\label{lemma5.1}
For the specified $q,s,p$, 
there is a constant \(C\) depending only on $g$ and thus independent of \(n,\delta\), such that, with $\mathfrak{b}(t)$ the drift defined in \eqref{4.22},

\begin{equation}
\mathfrak b(t)
\le C\left(\delta^p+\delta^\gamma
+\varepsilon_n^2\delta^{p-2}\right)
\quad\text{whenever }r_s(t)\le2K_0\delta,
\label{5.4}
\end{equation}

where

\begin{equation}
\gamma=p-2+\frac{2\beta(1-s)}{1-\beta s}
=p-2+2q>0.
\label{5.5}
\end{equation}

\end{lemma}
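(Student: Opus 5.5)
We start from the drift bound \eqref{4.22} and estimate its four terms separately under $r_s(t)\le 2K_0\delta$. Abbreviate $r=r_s(t)$, $\mathcal A=\mathcal A_s(t)$, and $\phi:=f_\delta'(r)/r=p(r^2+\delta^2)^{p/2-1}$ by \eqref{4.18}. In the inner region $\delta^2\le r^2+\delta^2\le(4K_0^2+1)\delta^2$, so
\[
\phi\asymp \delta^{p-2},
\]
with constants depending only on $p$ and $K_0$.

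\textbf{The three easy terms.} The reflection term $\frac12 f_\delta''(r)q_0(t)$ is handled in Step 5 of Lemma~\ref{lemma4.237}. Since $K_0>2/\sqrt{1-p}$ and $\theta_\delta$ vanishes for $r\le K_0\delta$, we have $q_0(t)=0$ whenever $f_\delta''(r)$ could be positive, and otherwise $f_\delta''q_0\le 0$. So this term is dropped. The term $\phi\, r^2$ is at most $C\delta^{p-2}\delta^2=C\delta^p$. The approximation term $C\phi\,\varepsilon_n^2$ is at most $C\varepsilon_n^2\delta^{p-2}$. These account for two of the three terms on the right of \eqref{5.4}.

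\textbf{The Hölder term (main obstacle).} It remains to bound
\[
-\phi\,\mathcal A+C\phi\, r^{2\beta(1-s)}\mathcal A^{\beta s}.
\]
Here $\mathcal A$ is not controlled in terms of $r$, so the good dissipation $-\phi\mathcal A$ must absorb the growth in $\mathcal A$. Since $\beta s<1$, apply Young's inequality with exponents $1/(\beta s)$ and $1/(1-\beta s)$:
\[
C r^{2\beta(1-s)}\mathcal A^{\beta s}\le \mathcal A+C' r^{2\beta(1-s)/(1-\beta s)}=\mathcal A+C' r^{2q}.
\]
The $\mathcal A$ part cancels against $-\phi\mathcal A$. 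What is left is $C'\phi\, r^{2q}\le C\delta^{p-2}(2K_0\delta)^{2q}=C\delta^{p-2+2q}=C\delta^\gamma$.

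The exponent $\gamma=p-2+2q$ is positive by the choice \eqref{5.3}, which is possible because $q>1/2$ by \eqref{5.1} and \eqref{5.3a}. Summing the four contributions gives \eqref{5.4}, with constants depending only on $g$ (through $C_{s,g}$), $s$, $p$ and $K_0$, and not on $n$ or $\delta$.

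The delicate points are, first, that Young's inequality requires a constant exactly $1$ in front of $\mathcal A$ so that the absorption is complete, which is achieved by adjusting $C'$. Second, the $D=0$ case must be covered: there \eqref{4.13} still holds with value $\frac p2\delta^{p-2}T_B$, and the same bounds apply since $Z=0$.
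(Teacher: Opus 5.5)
Your proof is correct and follows essentially the same route as the paper: drop the reflection term by the sign argument, bound $w_\delta r_s^2$ and $w_\delta\varepsilon_n^2$ directly, and absorb the Hölder term into the dissipation $-w_\delta\mathcal A_s$ via Young's inequality with exponents $1/(\beta s)$ and $1/(1-\beta s)$, leaving $\delta^{p-2+2q}=\delta^\gamma$. The only difference is the order of steps. The paper first replaces $r_s^{2\beta(1-s)}$ and $w_\delta$ by powers of $\delta$, applies Young to $\delta^{p-2}\mathcal A_s$, and therefore needs the lower bound $w_\delta\ge c_w\delta^{p-2}$ to absorb into half the dissipation; you apply Young before multiplying by $\phi$, which gives an exact pointwise cancellation using only $\phi>0$ and the upper bound $\phi\le p\delta^{p-2}$.
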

\begin{proof}

We denote by 
\begin{equation}
w_\delta(r)=\frac{f_\delta'(r)}r
=p(r^2+\delta^2)^{p/2-1},
\label{5.6}
\end{equation}
with the value at \(r=0\) defined by continuity.  If \(R_0=2K_0\) and
\(r\le R_0\delta\), then, because \(p-2<0\),

\begin{equation}
c_w\delta^{p-2}\le w_\delta(r)\le p\delta^{p-2},
\qquad
c_w=p(1+R_0^2)^{p/2-1}>0.
\label{5.7}
\end{equation}

Denote by $
\hat{a}=2\beta(1-s)$ and $\hat{b}=\beta s\in(0,1).$
Then the Hölder trace term in \eqref{4.22} is bounded by

\begin{equation}
\frac{f_\delta'(r_s(t))}{r_s(t)}
\big(r_s(t)^{2\beta(1-s)}\mathcal A_s(t)^{\beta s}
\big)\leq C\delta^{p-2+\hat{a}}{\mathcal {A}_s(t)}^{\hat{b}}.
\label{5.9}
\end{equation}

To keep track of every power, temporarily denote by \(\hat{X}=\delta^{p-2}\mathcal A_s(t)\).  Then

\begin{equation}
\delta^{p-2+\hat{a}}\mathcal A_s(t)^{\hat{b}}
=\delta^{\hat{a}+(p-2)(1-\hat{b})}\hat{X}^{\hat{b}}.
\label{5.10}
\end{equation}

For \(0<\hat{b}<1\), Young's inequality in the form

\begin{equation}
zX^{\hat{b}}\le\eta X+C_{\hat{b},\eta}z^{1/(1-\hat{b})}
\label{5.11}
\end{equation}
and the choice \(\eta=c_w/2\) give, for some $C'>0$ independent of $n$ and $\delta$,

\begin{equation}
 C\delta^{p-2+\hat{a}}\mathcal A_s(t)^{\hat{b}}
\le \frac{c_w}{2}\delta^{p-2}\mathcal A_s(t)
+C'\delta^{p-2+\hat{a}/(1-\hat{b})}.
\label{5.12}
\end{equation}
The last exponent is exactly \(\gamma\) in \eqref{5.5}.  The first term in
\eqref{5.12} is absorbed by half of the negative energy term
\(-w_\delta(r_s(t))\mathcal A_s(t)\) (which is the first term on the right hand side of \eqref{4.22}).  The remaining deterministic terms satisfy

\begin{equation}
w_\delta(r_s(t))r_s(t)^2\le C\delta^p,
\qquad
w_\delta(r_s(t))\varepsilon_n^2
\le C\varepsilon_n^2\delta^{p-2}.
\label{5.13}
\end{equation}

Finally we upper bound $f_\delta''(r_s(t))q_0(t)$. In the synchronous core $0<r_s(t)<K_0\delta$, we have $\theta_\delta(r_s(t))=0$, so that $q_0(t)=0$. When $r_s(t)=0$ we also have $q_0(t)=0$.  In the
transition annulus \(K_0\delta<r_s(t)<2K_0\delta\), \eqref{4.21} gives $f_\delta''(r_s(t))<0$ since $K_0>2/\sqrt{1-p}$, and thus
\(f_\delta''(r_s(t))q_0(t)\le0\).  Discard this negative term and the unused
half of the energy dissipation.  Equations \eqref{5.12}--\eqref{5.13} prove \eqref{5.4}.
\end{proof}
\subsection{The reflected outer region estimate}
The following lemma upper bounds the drift $\mathfrak{b}(t)$ in the active region $r_s(t)\geq 2K_0\delta$:
\begin{lemma}[outer-region absorption]\label{lemma6.1}
There is a deterministic \(r_*\in(0,1]\), and a constant $C_2>0$, both depending only on
\(m[g],c_g,[g]_{C^\beta},\beta,s,p\), such that the drift $\mathfrak{b}(t)$ in \eqref{4.22} satisfies 

\begin{equation}
\mathfrak b(t)
\le C_2 f_\delta(r_s(t))+C_2\varepsilon_n^2\delta^{p-2},\quad\text{whenever } 2K_0\delta\le r_s(t)\le r_*.
\label{6.1}
\end{equation}

\end{lemma}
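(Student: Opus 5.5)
The plan is to show that, throughout the active region $2K_0\delta\le r_s(t)\le r_*$, the positive Hölder trace term in \eqref{4.22} is absorbed either by the energy dissipation $-w_\delta\mathcal A_s$ or by the reflection term $\frac12 f_\delta''q_0$. Only the harmless terms $w_\delta r_s^2$ and $w_\delta\varepsilon_n^2$ should be left over. Throughout, write $r=r_s(t)$ and $w_\delta(r)=f'_\delta(r)/r$ as in \eqref{5.6}.

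\textbf{Step 1: comparable quantities and the reflection term.} Since $r\ge 2K_0\delta\ge\delta$, we have $r^2\le r^2+\delta^2\le 2r^2$. Hence $w_\delta(r)$ is comparable to $r^{p-2}$ and $f_\delta(r)$ is comparable to $r^p$, with constants depending only on $p$. This gives at once
\[
w_\delta r^2\le C f_\delta(r),\qquad w_\delta\varepsilon_n^2\le p\,\delta^{p-2}\varepsilon_n^2 .
\]
In this region $\theta_\delta=\pi$, so \eqref{4.16} gives $q_0=4m[g]^2r^2/Z$. Because $K_0>2/\sqrt{1-p}$, we have $\delta^2\le (1-p)r^2/4$, so $\delta^2+(p-1)r^2\le -\tfrac34(1-p)r^2$. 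Then \eqref{4.19} yields $f''_\delta(r)\le -c\,r^{p-2}$, and the reflection contribution satisfies
\[
\tfrac12 f''_\delta q_0\le -c_1 m[g]^2\,\frac{r^p}{Z}.
\]
This is the only place where the non-degeneracy $m[g]>0$ enters.

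\textbf{Step 2: rewrite everything in terms of $Z$.} Rather than using the interpolated form $r^{2\beta(1-s)}\mathcal A_s^{\beta s}$, I go back to the cruder bound \eqref{4.2}. This gives the Hölder term as $C w_\delta Z^\beta\approx C r^{p-2}Z^\beta$, plus the $\varepsilon_n^2$ piece already handled. Inverting \eqref{4.3} gives $\mathcal A_s\ge Z^{1/s}r^{-2(1-s)/s}$, so the energy term is at most $-c\,r^{p-2}Z^{1/s}r^{-2(1-s)/s}$. It therefore suffices to show that, for $r\le r_*$, the following holds for every value of $Z>0$:
\[
C r^{p-2}Z^\beta\le \tfrac12 c\,r^{p-2}Z^{1/s}r^{-2(1-s)/s}+\tfrac12 c_1m[g]^2\frac{r^p}{Z}.
\]

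\textbf{Step 3: two-case split in $Z$, which is the main point.} There are two cases.
\begin{itemize}
\item If $Z\le c' r^{2/(1+\beta)}$, then $r^{p-2}Z^\beta\le c'' r^p/Z$, so the reflection term absorbs the Hölder term.
\item If $Z\ge C' r^{2(1-s)/(1-\beta s)}$, then $Z^{1/s-\beta}\ge C'' r^{2(1-s)/s}$ (note $1/s-\beta>0$). In this case the energy term absorbs the Hölder term.
\end{itemize}
The two cases cover all $Z>0$ provided $C'r^{2(1-s)/(1-\beta s)}\le c'r^{2/(1+\beta)}$. For small $r$ this holds if the first exponent is strictly larger, i.e. $\frac{2(1-s)}{1-\beta s}>\frac{2}{1+\beta}$. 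Expanding, this is $(1-s)(1+\beta)>1-\beta s$, which is equivalent to $\beta>s$. That inequality is guaranteed by \eqref{5.1}.

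So I choose $r_*\in(0,1]$ small enough, depending only on $m[g],c_g,[g]_{C^\beta},\beta,s,p$ through the constants $C',c'$, that the covering holds for all $r\le r_*$. Combining with Step 1 then yields \eqref{6.1}.

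\textbf{Main obstacle.} The delicate point is Step 3, namely tracking the constants so that $r_*$ is independent of $n$ and $\delta$. This is possible because all constants come from \eqref{4.2}, \eqref{4.3}, the comparability bounds valid for $r\ge\delta$, and $m[g]$, none of which involves $n$ or $\delta$.
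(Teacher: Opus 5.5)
Your proposal is correct and is essentially the paper's argument. In the fully reflected region, the Hölder trace is absorbed by the reflection term $\tfrac12 f_\delta''q_0\lesssim -r_s^p/Z$ in one regime and by the dissipation $-w_\delta\mathcal A_s$ in the other, and the two regimes can be matched for small $r_s$ precisely because $s<\beta$. The only difference is the choice of free variable. The paper works with $y_s=\mathcal A_s/r_s^2$, uses the interpolated trace bound $r_s^{2\beta(1-s)}\mathcal A_s^{\beta s}$, and weakens the reflection term via $Z\le r_s^2y_s^s$, with crossover at $y_0=r_s^{-2/(s+1)}$. You instead keep $Z$ free, use the cruder $Z^\beta$ bound from \eqref{4.2}, and use \eqref{4.3} only to lower-bound $\mathcal A_s$.
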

This lemma does not reinforce $\beta>\frac{2}{3}$ but it uses $m[g]>0$ at the second estimate of \eqref{6.5}.
\begin{proof}

For \(r_s(t)\ge2K_0\delta\), the coupling is fully reflected in the sense that $K_\delta=I-2P_D$ and $S_\delta=0$.  Since
\(\delta/r_s(t)\le(2K_0)^{-1}\), \eqref{4.18}-\eqref{4.19} imply that
for constants $c_w,C_w,c_p$ depending only on $p,K_0$,
\begin{equation}
c_wr_s(t)^{p-2}\le w_\delta(r_s(t))\le C_wr_s(t)^{p-2},
\qquad
-f_\delta''(r_s(t))\ge c_pr_s(t)^{p-2},
\qquad
q_0=4m[g]^2\frac{r_s(t)^2}{Z(t)},
\label{6.2}
\end{equation}
where the strict positivity of \(c_p\) follows from $
(1-p)-\frac{\delta^2}{r_s(t)^2}
\ge (1-p)-\frac1{4K_0^2}>0.
$

Set

\begin{equation}
y_s(t)=\frac{\mathcal A_s(t)}{r_s(t)^2}\ge1,
\label{6.4}
\end{equation}
where the inequality \(y_s(t)\ge1\) follows directly from the Fourier weights. Then the
interpolation \eqref{4.3} reads \(Z(t)\le r_s(t)^2y_s(t)^s\).  Then the first and the fourth terms on the RHS of \eqref{4.22} satisfy
\begin{equation}
-w_\delta(r_s(t))\mathcal A_s(t)\le-cr_s(t)^py_s(t),
\qquad
\frac12f_\delta''(r_s(t))q_0(t)
\le-c\frac{r_s(t)^p}{Z(t)}
\le-cr_s(t)^{p-2}y_s(t)^{-s},
\label{6.5}
\end{equation}
where $c>0$ changes from line to line and does not depend on $\delta$. The second term on the RHS of \eqref{4.22} gives $cr_s(t)^p$.
The positive Hölder term (part of the third term on the RHS of \eqref{4.22}) is at most

\begin{equation}
\frac{f_\delta'(r_s(t))}{r_s(t)}
\big(r_s(t)^{2\beta(1-s)}\mathcal A_s(t)^{\beta s}
\big)
\le C r_s(t)^{p+2\beta-2}y_s(t)^{\beta s}.
\label{6.6}
\end{equation}

Thus \eqref{4.22} becomes

\begin{equation}
\begin{aligned}
\mathfrak b(t)
\le{}&
-c\big(r_s(t)^py_s(t)+r_s(t)^{p-2}y_s(t)^{-s}\big)
+Cr_s(t)^{p+2\beta-2}y_s(t)^{\beta s}\\
&+Cr_s(t)^p+Cp\varepsilon_n^2\delta^{p-2},
\end{aligned}
\label{6.7}
\end{equation}
where the last term comes from $p<2$ so that $w_\delta(r)\leq p\delta^{p-2}$.

We next show that the third term is negligible compared to the
two negative terms in the bracket. The comparison is stated in equation \eqref{6.9}, which is the ratio of the third term to the first two terms. 
\begin{equation}
\frac{r_s(t)^{2\beta-2}y_s(t)^{\beta s}}
{y_s(t)+r_s(t)^{-2}y_s(t)^{-s}}
\le  r_s(t)^{\frac{2(\beta-s)}{s+1}}.
\label{6.9}
\end{equation}
To prove this estimate, for \(0<r\le1\), set
$
y_0(r)=r^{-2/(s+1)}\ge1.$
Then for any \(1\le y_s(t)\le y_0(r_s(t))\), retain the term
\(r_s(t)^{-2}y_s(t)^{-s}\) in the denominator; and for \(y_s(t)\ge y_0(r_s(t))\), we instead retain the term \(y_s(t)\) in the denominator.
Then for $y_s(t)$ in both intervals, the maximum over $y$ is attained at
\(y_0(r_s(t))\).
 This verifies \eqref{6.9}.

Since \(s<\beta\), the exponent in \eqref{6.9} is strictly positive.  We then choose
\(r_*\le1\), independently of \(n,\delta\), such that \(Cr_*^{2(\beta-s)/(s+1)}\) is at most half the smaller
negative coefficient $c$ in the first term of RHS of \eqref{6.7}.  The third RHS term is then absorbed
whenever
\begin{equation}
2K_0\delta\le r_s(t)\le r_*.
\label{6.11}
\end{equation}
Finally, we also use \(r^p\le f_\delta(r)\).  Combining these estimates proves \eqref{6.1}.
\end{proof}

\subsection{Concluding drift estimate: combining inner and outer region}
Combining Lemma \ref{lemma5.1} and Lemma \ref{lemma6.1} gives the following stopped global
inequality:  

\begin{corollary}
Define the following stopping time
\begin{equation}
\tau_*^n=\inf\{t:r_s(U(t)-V^n(t))\ge r_*\},
\label{6.13}
\end{equation}
using the convention \(\inf\varnothing=\infty\).  Then for any \(t<\tau_*^n\), we have

\begin{equation}
\mathfrak b(t)
\le
C_3f_\delta(r_s(t))
+C_3\left(
\delta^p+\delta^\gamma
+\varepsilon_n^2\delta^{p-2}
\right),
\label{6.14}
\end{equation}
where $C_3>0$ depends on $m[g],c_g,[g]_{C^\beta},s,p,T$.

\end{corollary}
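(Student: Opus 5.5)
For $t<\tau_*^n$ we have $r_s(t)<r_*$, so every such time falls into exactly one of two regimes: the inner region $r_s(t)\le 2K_0\delta$ or the outer region $2K_0\delta\le r_s(t)\le r_*$. The plan is to bound $\mathfrak b(t)$ separately in each regime using the lemma for that regime. The two bounds are then merged into a single inequality by enlarging the constants and adding nonnegative terms.

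\emph{Inner region.} Lemma~\ref{lemma5.1} gives
\[
\mathfrak b(t)\le C(\delta^p+\delta^\gamma+\varepsilon_n^2\delta^{p-2}),
\]
with $C$ independent of $n$ and $\delta$. Since $f_\delta(r_s(t))\ge \delta^p>0$, adding $C_3f_\delta(r_s(t))$ on the right only weakens the estimate. So \eqref{6.14} holds in this region as soon as $C_3\ge C$.

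\emph{Outer region.} Lemma~\ref{lemma6.1} gives
\[
\mathfrak b(t)\le C_2 f_\delta(r_s(t))+C_2\varepsilon_n^2\delta^{p-2}.
\]
The terms $\delta^p$ and $\delta^\gamma$ are nonnegative, so they can be added freely. Hence \eqref{6.14} holds here once $C_3\ge C_2$.

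\emph{Combining.} Take $C_3=\max\{C,C_2\}$. Both constants depend only on $m[g]$, $c_g$, $[g]_{C^\beta}$, $\beta$, $s$, $p$ (and $K_0$, which is fixed in terms of $p$), and neither depends on $n$ or $\delta$. This gives the stated dependence; the listed dependence on $T$ is harmless.

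\emph{Degenerate points.} At $D(t)=0$, the drift formula was shown in the proof of Lemma~\ref{lemma4.237} to extend by the stated convention. Such times lie in the inner region, so they are covered by Lemma~\ref{lemma5.1}.

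\emph{Boundary times.} At the boundary value $r_s(t)=2K_0\delta$, both lemmas apply. If $2K_0\delta\ge r_*$, the outer region is empty and only the inner estimate is used; it still covers every $t<\tau_*^n$ because then $r_s(t)<r_*\le 2K_0\delta$.

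There is no real obstacle here. The only point needing care is that the constants in both lemmas are genuinely uniform in $n$ and $\delta$, and this is asserted in their statements.
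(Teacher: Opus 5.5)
Your proposal is correct and is essentially the paper's argument. You split the times before $\tau_*^n$ into the inner region (Lemma~\ref{lemma5.1}) and the outer region (Lemma~\ref{lemma6.1}), add the missing nonnegative terms to each bound, take the larger constant, and note that when $2K_0\delta\ge r_*$ only the inner estimate is needed.
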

If \(2K_0\delta>r_*\), then every point before $\tau_n^*$ lies in the inner
region, so Lemma \ref{lemma5.1} alone proves \eqref{6.14}. Otherwise, we further use \ref{lemma6.1} for the outer region.

\section{Proof of weak uniqueness: the scalar case}

In this section, we finish the uniqueness part of Theorem \ref{theorem1.11}. This follows almost directly from \eqref{6.14}.

To make the small scale strictly positive, we take instead (although we do not really use the lower bound $n^{-1}$)
\begin{equation}
\delta_n=\max\{\varepsilon_n,n^{-1}\},
\label{7.1}
\end{equation}
and take the constant \(\delta=\delta_n\) in \eqref{3.7}.  Since \(D_0=0\) as the initial law is the same,
\begin{equation}
f_{\delta_n}(r_s(D_0))=\delta_n^p.
\label{7.2}
\end{equation}

\subsection{Uniqueness in law at each fixed time}
\begin{lemma}[Small error up to stopping via applying variational Itô formula]\label{lemmas7.1}

Let $\mathcal{E}_n$ denote the following small constant 

\begin{equation}
\mathcal{E}_n=\delta_n^p+\delta_n^\gamma
+\varepsilon_n^2\delta_n^{p-2}.
\label{7.3}
\end{equation}
Then, for every \(T<\infty\), we can find $C_T>0$ depending only on $T,p,s,c_g,[g]_{C^\beta}$ such that
\begin{equation}
\sup_{t\le T}\mathbb E f_{\delta_n}
\big(r_s(D(t\wedge\tau_*^n))\big)
\le C_T\mathcal{E}_n.
\label{7.4}
\end{equation}

\end{lemma}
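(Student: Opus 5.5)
The plan is to apply the Itô decomposition of Lemma \ref{lemma4.237} to $f_{\delta_n}(r_s(D(t)))$, stop it at $\tau_*^n$ and at a localizing sequence, take expectations so that the martingale part vanishes, and close with Gronwall using the stopped drift bound \eqref{6.14}.

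\textbf{Step 1 (rigorous Itô formula).} Since $D\in L^2(\Omega\times[0,T];H^{1-s})$ and $D$ solves \eqref{4.1a} in the Gelfand triple $H^{1-s}\subset H^{-s}\subset H^{-1-s}$, the quadratic variational formula \eqref{quadraticitos} gives the semimartingale decomposition \eqref{4.12} of $R_s(t)=r_s(t)^2$. The map $\Phi_\delta(R)=(R+\delta^2)^{p/2}$ is $C^2$ on $[0,\infty)$ with bounded first and second derivatives, because $\delta>0$. Hence the scalar Itô formula applies to $\Phi_\delta(R_s(t))$, and its drift is exactly $\mathfrak b(t)$. The local martingale part is
$$M_t=2\sum_j\int_0^t\Phi_\delta'(R_s)\langle D,B_j\,dW^j\rangle_{H^{-s}}.$$
Its quadratic variation is bounded by $C\delta^{2(p-2)}\int_0^t r_s^2\sum_j\|B_j\|^2_{\mathcal L_2(H,H^{-s})}\,dr$. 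Introduce the localizing times $\sigma_R=\inf\{t:\|U(t)\|_2+\|V^n(t)\|_2\ge R\}$. By \eqref{lines8560}, the uniform boundedness of the reflection coefficients, and $\sup_t\mathbb E\|D\|^2_{H^{-s}}<\infty$ from \eqref{stronger4.8}, the stopped process $M_{t\wedge\tau_*^n\wedge\sigma_R}$ is a true martingale with zero mean. Therefore
$$\mathbb E f_\delta(r_s(D(t\wedge\tau^n_*\wedge\sigma_R)))=\delta^p+\mathbb E\int_0^{t\wedge\tau_*^n\wedge\sigma_R}\mathfrak b(r)\,dr.$$

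\textbf{Step 2 (drift bound and Gronwall).} For $r<\tau_*^n$, the corollary bound \eqref{6.14} gives $\mathfrak b(r)\le C_3 f_\delta(r_s(r))+C_3\mathcal E_n$. Set $\phi(t)=\mathbb E f_\delta(r_s(D(t\wedge\tau_*^n\wedge\sigma_R)))$. We need to bound the integral of $f_\delta(r_s(D(r)))$ over $r<t\wedge\tau\wedge\sigma_R$. The integrand with the indicator $\mathbf 1_{r<\tau}$ is at most $f_\delta(r_s(D(r\wedge\tau)))$, because $f_\delta\ge0$ and the two agree when $r<\tau$. This yields
$$\phi(t)\le\delta_n^p+C_3T\mathcal E_n+C_3\int_0^t\phi(r)\,dr.$$
Gronwall then gives $\phi(t)\le(1+C_3T)e^{C_3T}\mathcal E_n$, using $\delta_n^p\le\mathcal E_n$.

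\textbf{Step 3 (remove the localization).} Let $R\to\infty$. We have $\sigma_R\to\infty$ a.s. because $U,V^n$ are continuous in $H$. Also $f_\delta(r)\le(r^2+1)^{p/2}$ for $\delta\le 1$, and $\mathbb E\sup_t\|D\|_2^2<\infty$. Dominated convergence (or Fatou) therefore gives \eqref{7.4} with $C_T=(1+C_3T)e^{C_3T}$. This constant is uniform in $n$ because $C_3$ is.

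\textbf{Main obstacle.} I expect the delicate point to be Step 1: justifying that $\mathbb E\int\mathfrak b$ is finite and that the martingale term vanishes. The drift contains $-w_\delta\mathcal A_s$ and the Hölder trace term, whose integrability relies on $D\in L^2(\Omega\times[0,T];H^{1-s})$. Here I would use that $w_\delta\le p\delta^{p-2}$ is bounded for fixed $n$, so every term in $\mathfrak b$ is integrable by Lemma \ref{lemma4.237} and \eqref{4.2}. A second, smaller issue is the non-smoothness of $K_\delta,S_\delta$ at $D=0$. This does not enter the Itô formula for $D$, since those coefficients only appear inside $B_0,B_\perp$, which are Hilbert--Schmidt and Lipschitz by Lemma \ref{lemmas3.1}.
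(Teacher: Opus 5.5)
Your proposal is correct and follows the paper's proof essentially step for step. Both arguments use Itô's formula for $f_{\delta_n}(r_s(D))$ stopped at $\tau_*^n$, a mean-zero martingale part, the drift bound \eqref{6.14}, the indicator trick $\mathbf 1_{\{r<\tau_*^n\}}F_n(D(r))\le F_n(D(r\wedge\tau_*^n))$, and Gronwall with $u_n(0)=\delta_n^p\le\mathcal E_n$.

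Your extra localization $\sigma_R$ is harmless but unnecessary, since before $\tau_*^n$ you already have $r_s\le r_*\le 1$. The paper instead gets square-integrability of the martingale directly from $\|DF_n\|\le C_p\delta_n^{p-1}$ together with $\sup_{\ell\le T}\mathbb E\|\mathfrak B_\ell^n\|^2_{\mathcal L_2(\mathbb U,H^{-s})}<\infty$.
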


The scale of $\mathcal{E}_n$ sends every error to zero. Indeed, since
\(\varepsilon_n\le\delta_n\),

\begin{equation}
\varepsilon_n^2\delta_n^{p-2}
=\delta_n^p\left(\frac{\varepsilon_n}{\delta_n}\right)^2
\le\delta_n^p.
\label{7.8}
\end{equation}
Thus \(\mathcal{E}_n\to0\), because \(\delta_n\to0\) and \(p,\gamma>0\).

\begin{proof}An estimate on the drift after applying Itô's formula is proven in \eqref{6.14}, and here we only rigorously verify the required integrability conditions for applying Itô, and then apply Gronwall at the end.

\textbf{Step 1: Computing the Fréchet derivative.}
For each fixed \(n\), the following function $F_n(D):D\in H^{-s}\to\mathbb{R}$,

\begin{equation}
F_n(D)=\big(\|D\|_{H^{-s}}^2+\delta_n^2\big)^{p/2}
\label{7.5}
\end{equation}
is globally \(C^2\) in the sense of Fréchet derivatives.  Its first Fréchet derivative satisfies 
$$
DF_n(D)[h]=(p/2)(\|D\|_{H^{-s}}^2+\delta_n^2)^{p/2-1}\cdot 2\langle D,h\rangle_{H^{-s},}
$$so that 
$$
\|DF_n(D)\|\leq p\|D\|_{H^{-s}}(\|D\|_{H^{-s}}^2+\delta_n^2)^{p/2-1}\leq \sup_{r\geq 0} pr(r^2+\delta_n^2)^{p/2-1}\leq C_p\delta_n^{p-1}, 
$$
Its second derivative is
$$\begin{aligned}
D^2F_n(D)[h,k]&=p(p-2)(\|D\|_{H^{-s}}^2+\delta_n^2)^{p/2-2}\langle D,k\rangle_{H^{-s}}\langle D,h\rangle_{H^{-s}}\\&+p(\|D\|_{H^{-s}}^2+\delta_n^2)^{p/2-1}\langle h,k\rangle_{H^{-s}}, 
\end{aligned}$$
so the norm of $D^2F_n(D)$ is bounded from above by \(C_p\delta_n^{p-2}\).

\textbf{Step 2: Verifying integrability and applying Itô.} As we have three independent noises, we denote the noise space by $\mathbb{U}=H\oplus H\oplus H$ and denote by $\mathfrak{B}_t^n:\mathbb{U}\to H^{-s}$ the (Hilbert-Schmidt) coefficient of the diffusion operator in the expression for the difference process $D(t)$ in \eqref{4.1a}. The cylindrical Wiener process is denoted $d\mathcal{W}$. Then applying variational Itô to $\|D\|_{H^{-s}}^2$ and  applying 1-d Itô to $\Phi_{\delta_n}(\|D\|^2_{H^{-s}})$ produces the following martingale 
$$
N_t^n=\int_0^{t\wedge\tau_*^n}DF_n(D(\ell))[\mathfrak{B}_\ell^nd\mathcal{W}_\ell].
$$
We claim that for some constant $C_{T,n}>0$ possibly depending on the law of $U$ and the approximation, $$\sup_{0\leq\ell\leq T}\mathbb{E}\|\mathfrak{B}_\ell^n\|_{\mathcal{L}_2(\mathbb{U},H^{-s})}^2\leq C_{T,n}<\infty.
$$
Indeed, this follows from the Hilbert-Schmidt norm estimate in \eqref{4.2} combined with the $L^2$-energy estimate of $D$ in \eqref{stronger4.8} (we do not yet need $n$- and $U$- independent estimates.)

Thus the martingale $N_t^n$ satisfies
\begin{equation}
\mathbb E\langle N^n\rangle_{T\wedge\tau_*^n}
\le\mathbb{E}\int_0^{T\wedge\tau_*^n}\|\mathfrak{B}_\ell^n\|_{\mathcal{L}_2(\mathbb{U},H^{-s})}^2\|\nabla F_n(D_\ell)\|_{H^{-s}}^2d\ell\leq 
C_{T,n}\delta_n^{2p-2}<\infty,
\label{7.6}
\end{equation}so the martingale $N_t^n$ is square integrable and $\mathbb{E}[N_t^n]=0$.
At this stage we only need this bound to be finite despite its $n$-dependence, as we only use it to prove that the stopped martingale has mean zero for each fixed \(n\).
For the drift term, the second order derivative term is similarly
$$
\mathbb{E}\int_0^{T\wedge \tau_*^n}|\operatorname{Tr}[(\mathfrak{B}_t^n)^*D^2F_n(D_t)\mathfrak{B}_t^n]|dt\leq C_p\delta_n^{p-2}\mathbb{E}\int_0^T\|\mathfrak{B}_t^n\|_{\mathcal{L}_2(\mathbb{U},H^{-s})}^2dt<\infty,
$$
which is also finite for each fixed $n$. Finally, the integrability of 
$$
\int_0^{T\wedge\tau_*^n}|DF_n(D_t)[\Delta D_t]|dt<\infty
$$follows from $|\langle\Delta D,D\rangle_{V^*,V}|\leq \mathcal{A}_s+r_s^2$ and then using the energy estimate in Lemma \ref{lemma4.237}.

\textbf{Step 3: the Gronwall argument.}
Consider the function
$$u_n(t):=\mathbb EF_n(D(t\wedge\tau_*^n))=\mathbb{E}f_{\delta_n}(r_s(t\wedge \tau_*^n)),$$we apply the stopped Itô identity
and the drift estimate \eqref{6.14} which shows that $$\mathbb{E}[\mathbf1_{t<\tau_*^n}\mathfrak{b}(t)]\leq C_3u_n(t)+C_3\mathcal{E}_n$$ as the martingale has zero expectation, to get that
\begin{equation}
u_n(t)\le u_n(0)+ C_3\mathcal{E}_nt+C_3\int_0^tu_n(\ell)d\ell.
\label{7.7}
\end{equation}
Here
\(\mathbf1_{\{\ell<\tau_*^n\}}F_n(D(\ell))
\le F_n(D({\ell\wedge\tau_*^n}))\).
Since $u_n(0)\leq\mathcal{E}_n$, then Gronwall finally proves \eqref{7.4}.
\end{proof}

From this we can prove that the stopping time is larger than $T$ with very high probability:

\begin{lemma}[No early stops]\label{lemmas7.2}
For every \(T<\infty\), we can find $C_T>0$ depending on $T,c_g,[g]_{C^\beta},\beta,s,p$ such that 

\begin{equation}
\mathbb P(\tau_*^n\le T)
\le C_Tr_*^{-p}\mathcal{E}_n\rightarrow0,\quad n\to\infty.
\label{7.9}
\end{equation}

\end{lemma}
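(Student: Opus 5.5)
The plan is a Markov/Chebyshev argument applied to the stopped functional controlled by Lemma \ref{lemmas7.1}. On the event $\{\tau_*^n\le T\}$ we evaluate the process at the stopping time. By continuity of $t\mapsto D(t)$ in $H^{-s}$ (Lemma \ref{lemma4.237}, \eqref{4.8}), the definition \eqref{6.13} gives $r_s(D(\tau_*^n))= r_*$ whenever $\tau_*^n<\infty$. Indeed, $r_s(D(0))=0<r_*$, so the infimum is attained at a point where the norm equals $r_*$. Hence on $\{\tau_*^n\le T\}$ we have $D(T\wedge\tau_*^n)=D(\tau_*^n)$ and
\[
f_{\delta_n}\bigl(r_s(D(T\wedge\tau_*^n))\bigr)=(r_*^2+\delta_n^2)^{p/2}\ge r_*^p .
\]

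Next, $f_{\delta_n}\ge0$, so Markov's inequality gives
\[
\mathbb P(\tau_*^n\le T)\le r_*^{-p}\,\mathbb E\Bigl[f_{\delta_n}\bigl(r_s(D(T\wedge\tau_*^n))\bigr)\mathbf 1_{\{\tau_*^n\le T\}}\Bigr]\le r_*^{-p}\sup_{t\le T}\mathbb E f_{\delta_n}\bigl(r_s(D(t\wedge\tau_*^n))\bigr).
\]
Applying \eqref{7.4} at $t=T$ bounds the right-hand side by $C_Tr_*^{-p}\mathcal E_n$. Here $C_T$ and $r_*$ depend only on the parameters listed in Lemma \ref{lemma6.1} and Lemma \ref{lemmas7.1}, and not on $n$ or on the particular solution $U$. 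Finally, $\mathcal E_n\to0$ by \eqref{7.8} together with $\delta_n\to0$, $p>0$ and $\gamma>0$ from \eqref{5.5}.

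The only point needing care is the equality $r_s(D(\tau_*^n))=r_*$, which requires the continuous $H^{-s}$-version of $D$. Lemma \ref{lemma4.237} in fact gives continuity even in $H$, and $r_s$ is continuous on $H$, so the hitting time is a genuine stopping time with $r_s\ge r_*$ at it. For the Markov bound the inequality $r_s(D(\tau_*^n))\ge r_*$ is all that is needed, so the argument does not depend on exact attainment. The substantive input is the uniform-in-$n$ estimate of Lemma \ref{lemmas7.1}, which we take as given.
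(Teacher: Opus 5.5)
Your proposal is correct and follows the paper's proof essentially verbatim. In both, continuity of $D$ in $H^{-s}$ gives $r_s(D(\tau_*^n))=r_*$ on $\{\tau_*^n\le T\}$, and then $f_{\delta_n}(r)\ge r^p$ together with \eqref{7.4} at $t=T$ yields the Markov bound $r_*^p\,\mathbb P(\tau_*^n\le T)\le C_T\mathcal E_n$. Your remark that only $r_s(D(\tau_*^n))\ge r_*$ is actually needed is a harmless refinement.
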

\begin{proof}

The paths of \(D(t)\), solving the difference equation, are continuous in \(H^{-s}\), so that
\(r_s(D(\tau_*^n))=r_*\) on \(\{\tau_*^n\le T\}\).  Then we evaluate \eqref{7.4}
at \(t=T\).  Since \(F_n(D)\ge r_s(D)^p\) for any $D\in H^{-s}$,
\begin{equation}
r_*^p\mathbb P(\tau_*^n\le T)
\le\mathbb EF_n(D({T\wedge\tau_*^n}))
\le C_T\mathcal{E}_n,
\label{7.10}
\end{equation}
which proves \eqref{7.9} since $
\mathcal{E}_n\to 0$.
\end{proof}

\begin{corollary}\label{corollary7.3}
Lemma \ref{lemmas7.1} and \ref{lemmas7.2} imply that, for every fixed \(t>0\),

\begin{equation}
r_s(U(t)-V^n(t))\longrightarrow0
\quad\hbox{in probability}.
\label{7.11}
\end{equation}
\end{corollary}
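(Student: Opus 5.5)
Fix $t\in(0,T]$ and $\eta>0$; the goal is $\mathbb P(r_s(D(t))>\eta)\to0$ as $n\to\infty$. The plan is to split according to whether the stopping time $\tau_*^n$ has occurred by time $t$:
\[
\mathbb P\bigl(r_s(D(t))>\eta\bigr)\le \mathbb P(\tau_*^n\le T)+\mathbb P\bigl(r_s(D(t\wedge\tau_*^n))>\eta,\ \tau_*^n>T\bigr),
\]
using that $D(t)=D(t\wedge\tau_*^n)$ on $\{\tau_*^n>T\}\subset\{\tau_*^n>t\}$. The first term is at most $C_Tr_*^{-p}\mathcal E_n$ by Lemma \ref{lemmas7.2}, and this tends to $0$ by \eqref{7.8} together with $\delta_n\to0$ and $p,\gamma>0$.

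For the second term I will apply Markov's inequality to the stopped functional controlled in Lemma \ref{lemmas7.1}. Since $F_n(D)=(r_s(D)^2+\delta_n^2)^{p/2}\ge r_s(D)^p$, the event $\{r_s(D(t\wedge\tau_*^n))>\eta\}$ is contained in $\{f_{\delta_n}(r_s(D(t\wedge\tau_*^n)))>\eta^p\}$. Markov's inequality and \eqref{7.4} then give the bound
\[
\eta^{-p}\,\mathbb E f_{\delta_n}\bigl(r_s(D(t\wedge\tau_*^n))\bigr)\le \eta^{-p}C_T\mathcal E_n\to0.
\]
Combining the two bounds gives \eqref{7.11}. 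The argument in fact yields the slightly stronger conclusion $\sup_{t\le T}\mathbb P(r_s(D(t))>\eta)\to0$.

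I expect no real obstacle here, since all the analytic work sits in Lemmas \ref{lemmas7.1} and \ref{lemmas7.2}. Two points still need checking. First, the constants $C_T$ must be independent of $n$; this holds because the constant $C_3$ in \eqref{6.14} depends only on $m[g],c_g,[g]_{C^\beta},s,p,T$. Second, $r_s(D(t))$ must be a random variable, which follows from the $H^{-s}$-continuity of $D$ established in Lemma \ref{lemma4.237}.
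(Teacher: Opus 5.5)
Your proof is correct and is essentially the paper's argument: you split on whether $\tau_*^n$ has occurred, bound that probability with Lemma \ref{lemmas7.2}, and on the complement apply Markov's inequality to the stopped quantity using $f_{\delta_n}(r)\ge r^p$ together with Lemma \ref{lemmas7.1}. The only difference is that the paper splits on $\{\tau_*^n\le t\}$ rather than $\{\tau_*^n\le T\}$, which is immaterial.
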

\begin{proof}
Indeed, for every \(\eta>0\),

\begin{equation}
\begin{aligned}
\mathbb P(r_s(D(t))>\eta)
\le{}&\mathbb P(\tau_*^n\le t)+\eta^{-p}\mathbb E\!\left[
r_s(D({t\wedge\tau_*^n}))^p;\tau_*^n>t\right],
\end{aligned}
\label{7.12}
\end{equation}
the first term tends to zero by Lemma \ref{lemmas7.2} and the second term tends to zero by Lemma \ref{lemmas7.1}.  
\end{proof}

For later convenience, we restate the convergence in probability in Corollary \ref{corollary7.3} into the following bounded expectation convergence.
We introduce the following bounded metric
$$d_s(h,k):=1\wedge r_s(h-k)=1\wedge \|h-k\|_{H^{-s}},\quad h,k\in H^{-s},$$ and the elementary inequality
\(1\wedge r\le r^p\), which is valid for all \(0<p<1\), then gives

\begin{equation}
\begin{aligned}
\mathbb E d_s(U(t),V^n(t))
&\le \mathbb P(\tau_*^n\le t)\\
&\quad+\mathbb E\!\left[
r_s(D({t\wedge\tau_*^n}))^p;\tau_*^n>t\right]
\rightarrow0.
\end{aligned}
\label{7.13}
\end{equation}
(Indeed, on the event $\tau_*^n\leq t$ the distance is upper bounded by $d_s\leq 1$, and on the complementary event we use $1\wedge r\leq r^p$.)
Importantly, the rate of convergence to 0 of this quantity depends only on the coefficient $g$ and $s,p$ and $\varepsilon_n$, and does not depend on
the chosen weak law of \(U\).  For every candidate
solution law \(P\) to the SPDE \eqref{1.1}, let
\(\mu_t^P=\operatorname{Law}_P(U(t))\), and let \(\mu_t^n\) be the
time-\(t\) marginal of \(P_n\) \eqref{unequation}, which coincides with the law of the solution to \eqref{3.7} by Lemma \ref{preservations}.  The full joint law constructed in
Sections ~\ref{sections2} and \ref{sections3} is an admissible coupling, so \eqref{7.13} implies

\begin{equation}
W_{d_s}(\mu_t^P,\mu_t^n)\longrightarrow0,
\qquad
d_s(x,y)=1\wedge\|x-y\|_{H^{-s}},
\label{7.14}
\end{equation}
where $W_{d_s}$ is the Wasserstein distance defined on the space of probability measures on $H^{-s}$ with respect to the distance function $d_s$.
Moreover, if \(P,Q\) are two candidate weak-solution laws to the SPDE \eqref{1.1}, then the above computation gives

\begin{equation}
W_{d_s}(\mu_t^P,\mu_t^Q)
\le W_{d_s}(\mu_t^P,\mu_t^n)
+W_{d_s}(\mu_t^n,\mu_t^Q)\longrightarrow0.
\label{7.15}
\end{equation}
Thus the law of every candidate weak solution $\mu^P$ to the original SPDE \eqref{1.1} has the same time-\(t\) marginal for any $t>0$.  This justifies
that the law is unique at every endpoint. In the next section we prove that this uniqueness of law at each time $t$ implies
the uniqueness of the entire path law, hence completing the whole weak uniqueness statement.

\subsection{Uniqueness from one-time marginals to path law}

Finally, we upgrade uniqueness at each time to uniqueness on the continuous path space.

Recall that $H=L^2(\mathbb{T})$.
Let
\(\Omega_T=C([0,T];H)\), and let \(X\) be the coordinate process.  Let $\mathcal{C}\subset C^\infty(\mathbb
T)$ be the span of the bases $\{e_k\}$, which involve the constant function and the real trigonometric polynomials. For any $x\in H$, we define the martingale problem $\operatorname{MP}(g,x)$. A probability law \(P\)
on \(\Omega_T\) solves \(\mathrm{MP}(g,x)\) if \(P(X_0=x)=1\) and,
for every \(\phi,\psi\in\mathcal C\), the process
\begin{equation}
M_t^\phi
:=\langle X_t,\phi\rangle-\langle x,\phi\rangle
-\int_0^t\langle X_r,\Delta\phi\rangle dr
\label{8.1}
\end{equation}
is a continuous martingale with quadratic covariation
\begin{equation}
[M^\phi,M^\psi]_t
=\int_0^t\int_{\mathbb T}
g(X_r(y))^2\phi(y)\psi(y)\,dy\,dr.
\label{8.2}
\end{equation}
See for example \cite[Section~2]{Walsh1986} for more background on the martingale problem formulation.

The equivalence between martingale problems and weak solutions to SHEs is well known:
\begin{lemma}[equivalence with the weak SPDE]\label{equivalence2}

The laws of weak mild solutions to the SHE \eqref{1.1} have a one-to-one correspondence with the solutions
of \(\mathrm{MP}(g,u_0)\).

\end{lemma}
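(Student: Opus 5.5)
The correspondence runs in both directions, and I will prove each separately.

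\textbf{From weak mild solutions to $\mathrm{MP}(g,u_0)$.} Let $(U,W)$ be a weak mild solution as in \eqref{1.4}--\eqref{1.5}, and let $P$ be the law of $U$ on $\Omega_T$. Fix $\phi\in\mathcal C$. I first pair \eqref{1.4} with $\phi$ and use $S_t^*=S_t$ together with the stochastic Fubini theorem. Since $\phi$ is smooth, $\langle S_{t-r}M_{g(U_r)}\cdot,\phi\rangle=\langle M_{g(U_r)}\cdot,S_{t-r}\phi\rangle$. Writing $S_t\phi=\phi+\int_0^t S_a\Delta\phi\,da$ then turns the mild formulation into the weak formulation
\[
\langle U_t,\phi\rangle=\langle u_0,\phi\rangle+\int_0^t\langle U_r,\Delta\phi\rangle dr+\int_0^t\langle g(U_r)\phi,dW_r\rangle .
\]
This is the standard Walsh/Da Prato--Zabczyk argument. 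Its integrability requirements follow from linear growth of $g$ and $\mathbb E\sup_t\|U_t\|_2^2<\infty$. The stochastic integral is a continuous square-integrable martingale, and its covariation is $\int_0^t\langle g(U_r)\phi,g(U_r)\psi\rangle dr$, which is exactly \eqref{8.2}. These martingale properties hold with respect to the filtration generated by $U$, because $M^\phi$ is a functional of $U$ and the property is inherited by the smaller filtration. Hence $P$ solves $\mathrm{MP}(g,u_0)$.

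\textbf{From $\mathrm{MP}(g,u_0)$ to weak mild solutions.} Conversely, let $P$ solve $\mathrm{MP}(g,u_0)$. The family $\{M^\phi\}$ is linear in $\phi$, so it defines an orthogonal martingale measure $M(dr,dy)$ with covariance $g(X_r(y))^2\,dy\,dr$, and $M^\phi_t=\int_0^t\!\int\phi\,dM$. Extending from $\mathcal C$ to $L^2$ test functions uses the $L^2$ isometry, which needs the moment bound $\int_0^T\|g(X_r)\|_2^2dr<\infty$ a.s. I would obtain that bound by localization with stopping times, since $X\in C([0,T];H)$ and $g$ has linear growth.

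Next I use the ellipticity $g\ge c_g>0$. It allows me to define
\[
W_t(\phi):=\int_0^t\!\int_{\mathbb T}\frac{\phi(y)}{g(X_r(y))}\,M(dr,dy).
\]
Its covariation is $t\langle\phi,\psi\rangle$, so by Lévy's characterization (as in Lemma~\ref{lemmas2.1}) $W$ is a cylindrical Wiener process. No enlargement of the probability space is needed, because $g$ does not vanish. Then $M(dr,dy)=g(X_r(y))W(dr,dy)$, so the weak formulation holds with this $W$ for all $\phi\in\mathcal C$.

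To recover the mild form, I apply the weak formulation to the time-dependent test functions $S_{t-r}\phi$ (approximating by trigonometric polynomials, where $S$ acts diagonally) and use Itô's product rule. This gives $\langle X_t,\phi\rangle=\langle S_tu_0,\phi\rangle+\int_0^t\langle S_{t-r}M_{g(X_r)}dW_r,\phi\rangle$ for all $\phi\in\mathcal C$. Since $\mathcal C$ is dense and both sides lie in $H$, \eqref{1.4} follows.

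\textbf{Bijectivity and the main obstacle.} The two maps are mutually inverse at the level of laws: the first sends a solution to the law of $U$, and the second realizes any martingale-problem solution as a weak solution on the canonical space with the same law. The integrability condition in \eqref{1.5} for the constructed solution follows from the a priori bound \eqref{usigfatou} in Lemma~\ref{lemma4.237}, which holds for any weak mild solution.

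The main technical obstacle is the extension of the martingale measure from $\mathcal C$ to general predictable integrands such as $\phi/g(X)$ when $g$ grows linearly. I would handle it by localizing with $\tau_R=\inf\{t:\|X_t\|_2\ge R\}$ and applying Walsh's construction on $[0,\tau_R]$ before letting $R\to\infty$.
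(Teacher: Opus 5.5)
Your proof is correct, but it takes a genuinely different route from the paper's. The paper does not prove the equivalence by hand. It cites the abstract one-to-one correspondence between martingale-problem solutions and weak solutions of semilinear stochastic evolution equations in \cite[Theorem 3.6 and Corollary 3.7]{Kunze2013}, and asserts that the energy estimates of Lemma~\ref{lemma4.237} supply that theorem's hypotheses.

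You instead give a self-contained Walsh-style proof tailored to the linear-test-function formulation \eqref{8.1}--\eqref{8.2}. In the forward direction you pass from mild to weak by testing against trigonometric polynomials. In the converse you use $|g|\ge c_g$ to rebuild the driving noise on the canonical space as $W=g(X)^{-1}\cdot M$. You then recover \eqref{1.4} mode by mode, since $\Delta$ is diagonal in $\{e_k\}$.

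What each route buys: the citation is short and does not exploit the lower bound on $g$, but it leaves the reader to check the abstract hypotheses for the unbounded multiplication operators $M_{g(u)}$. Your route is elementary and makes every integrability check explicit. Because of ellipticity, it needs neither an enlargement of the probability space nor a general martingale representation theorem. Relying on ellipticity costs nothing here.

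Two small points to tighten. First, $g(X_r)^2$ is only in $L^1(\mathbb T)$, so the bracket $\int_0^t\int g(X_r)^2\phi^2\,dy\,dr$ is controlled by $\|\phi\|_\infty^2\int_0^t\|g(X_r)\|_2^2\,dr$, not by $\|\phi\|_2^2$. The martingale measure therefore extends from $\mathcal C$ to bounded measurable test functions, not to all of $L^2$. For indicators you can use Fejér means, which are trigonometric polynomials, uniformly bounded by $1$, and converge a.e., and then apply dominated convergence after localizing. This suffices for your argument, because the integrand $\phi/g(X)$ is bounded. Second, when you invoke \eqref{usigfatou} to obtain \eqref{1.5} for the constructed solution, say explicitly that its proof uses only the mild identity \eqref{1.4} and path continuity (through the localization $\tau_R$), not \eqref{1.5} itself, so the argument is not circular.
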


\begin{proof}
    The infinite-dimensional correspondence
between stochastic evolution equations and their associated martingale
problems can be found in
\cite[Theorem 3.6 and Corollary 3.7]{Kunze2013}. The energy estimates in Lemma \ref{lemma4.237} verify all necessary conditions for the use of that theorem. 
\end{proof}

\begin{lemma}[weak existence]\label{weakexist}
For every \(x\in H\), \(\mathrm{MP}(g,x)\) is nonempty.
\end{lemma}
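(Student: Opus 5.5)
We obtain a weak solution of \eqref{1.1} with initial datum $x$ and then invoke Lemma~\ref{equivalence2} to transfer it to $\mathrm{MP}(g,x)$. The solution is constructed by approximation with the smooth coefficients $g_n$ of \eqref{3.1}.

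\textbf{Step 1: approximating solutions.} For each $n$, equation \eqref{unequation} with $u_n(0)=x$ has a unique strong mild solution $u_n$, since $g_n$ is globally Lipschitz. Write $P_n$ for its law on $\Omega_T=C([0,T];H)$. By the same argument as Step~1 of Lemma~\ref{lemma4.237} (factorization, together with the linear growth of $g_n$ uniformly in $n$ since $[g_n]_{C^\beta}\le 2[g]_{C^\beta}$), we have for some $p>4$
\[
\sup_n\mathbb E\sup_{t\le T}\|u_n(t)\|_H^p\le C_{p,T,g}(1+\|x\|_H^p).
\]

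\textbf{Step 2: tightness.} This is the main obstacle, because $H$ is infinite dimensional and we need compactness in $C([0,T];H)$. Write $u_n(t)=S_tx+Y_n(t)$, where $Y_n$ is the stochastic convolution. The deterministic part $S_\cdot x$ is a single element of $C([0,T];H)$. For $Y_n$ we use the factorization method of \cite[Section~5.3]{da2014stochastic}, with $Y_n=R_\alpha \mathcal Y_n$ for a small $\alpha>0$. The heat kernel bound \eqref{lines869} with $A^{\epsilon/2}$ in place of $A^{(1-s)/2}$, for a small $\epsilon>0$, gives a $\sigma$ with $\sigma$ between $0$ and $1/4$ and with
\[
\|A^{\epsilon/2}S_uM_f\|_{HS}^2\lesssim u^{-1/2-\epsilon}\|f\|_H^2,
\]
which is integrable for $\epsilon<1/2$. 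Consequently $\sup_n\mathbb E\|\mathcal Y_n\|_{L^p(0,T;H^{\epsilon})}^p<\infty$. The operator $R_\alpha$ is compact from $L^p(0,T;H)$ into $C([0,T];H)$ (Da Prato--Zabczyk, Prop.~8.4, which uses compactness of $S_t$ for $t>0$). Together these give tightness of $\{P_n\}$ in $C([0,T];H)$.

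\textbf{Step 3: identification of the limit.} By Prokhorov, pass to a subsequence with $P_n\Rightarrow P$. For $\phi,\psi\in\mathcal C$, the process $M^{\phi}$ of \eqref{8.1} is a continuous functional of the path, and under $P_n$ it is a martingale with bracket $\int_0^t\int g_n(X_r)^2\phi\psi$. Uniform convergence $g_n\to g$ and the linear growth bounds give $g_n(X)^2\to g(X)^2$ along weakly convergent paths, locally uniformly on bounded sets of $C([0,T];H)$. The uniform $L^p$ moment bounds from Step~1 provide the uniform integrability needed to pass to the limit in
\[
\mathbb E_{P_n}\big[(M_t^\phi-M_s^\phi)\Psi\big]=0
\]
and in the corresponding identity for $M^\phi M^\psi-[M^\phi,M^\psi]$, where $\Psi$ is a bounded continuous $\mathcal F_s$-measurable functional. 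Hence $P$ solves $\mathrm{MP}(g,x)$, and $P(X_0=x)=1$ because $P_n(X_0=x)=1$.

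Alternatively, one may directly cite the compactness argument of \cite{gkatarek1994weak}, followed by Lemma~\ref{equivalence2}; the steps above are exactly its verification.
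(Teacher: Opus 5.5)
Your argument is correct and is essentially the paper's route. The paper's proof simply cites the weak existence result of \cite{gkatarek1994weak} (factorization plus compactness of the semigroup) and transfers it to $\mathrm{MP}(g,x)$ through Lemma~\ref{equivalence2}. You carry out that tightness argument explicitly and check the martingale problem for the limit law directly, which makes explicit how the linear-growth multiplier $M_{g(u)}$ (unbounded on $H$) is handled via $\|S_uM_f\|_{\mathrm{HS}}^2\lesssim u^{-1/2}\|f\|_H^2$, and which uses only the mild-to-martingale direction of Lemma~\ref{equivalence2}.

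The one thing to clean up is the garbled phrase about ``a $\sigma$ between $0$ and $1/4$''. What you need is $1/p<\alpha<\tfrac14-\tfrac{\epsilon}{2}$, hence $p>4$. The $H^{\epsilon}$ gain is also unnecessary: compactness of $R_\alpha$ from $L^p(0,T;H)$ into $C([0,T];H)$ already follows from compactness of $S_t$ for $t>0$.
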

\begin{proof}
By the equivalence in Lemma \ref{equivalence2}, the existence of MP follows from the weak existence result of SHE \eqref{1.1} in \cite{gkatarek1994weak}, because the coefficient $g$ is continuous and of linear growth.
\end{proof}

Let \(\iota:H\to H^{-s}\) be the continuous inclusion.  For the smooth
equation with solution $V^n$ and coefficient $g_n$, let \(\Pi_t^n(u_0,\cdot)\) be its endpoint probability transition kernel on \(H\), which gives the law of $V^n$ at time $t$ with initial value $u_0$.
Since $g_n$ is globally Lipschitz, this gives a strong solution measurable in
\(u_0\in H\), so \(u_0\mapsto \iota_{\# } \Pi_t^n(u_0,\cdot)\) is a Borel map from $H$ into the
Polish space \(\mathcal P(H^{-s})\).

For each fixed \(u_0\), Lemma \ref{weakexist} guarantees at least one candidate weak
solution to $\operatorname{MP}(g,u_0)$.  Then estimate \eqref{7.14}, applied from this \(u_0\), shows that \footnote{ \eqref{8.12} is interpreted as the convergence of probability law from the left to the right in Wasserstein distance $W_{d_s}$.}

\begin{equation}
\iota_{\# } \Pi_t^n(u_0,\cdot)\longrightarrow\overline P_t(u_0,\cdot)
\quad\text{in }W_{d_s},
\label{8.12}
\end{equation}
where $\overline P_t(u_0,\cdot)$ is the probability transition kernel of one martingale solution, but as the convergence holds for every MP solution (while the left hand side is independent of the MP solution), this makes the limit independent of the chosen
candidate, so that $\overline P_t(u_0,\cdot)$ is the same martingale transition kernel for all elements in $\operatorname{MP}(g,u_0)$. The map \(u_0\mapsto\overline P_t(u_0,\cdot)\) is a Borel probability
kernel from $H$ to \(H^{-s}\). Since each solution of the martingale problem $\operatorname{MP}(g,u_0)$ is \(H\)-valued, and the injection image $\iota (H)\in H^{-s}$ is a Borel subset by Lusin–Souslin, and the pullback map $\iota^{-1}:\iota (H)\to H$ is also Borel, we must have that 
\(\overline P_t(u_0,\iota (H))=1\). Then pulling back by \(\iota^{-1}\) now defines a
Borel kernel \(P_t(u_0,\cdot)\) on \(H\).  In this time-homogeneous setting we then 
write \(P_{r,t}=P_{t-r}\).  This is the common endpoint law from every
starting pair \((r,u_0)\).

To control multi-dimensional time marginals, we use the following: 
\begin{lemma}[conditional MP]
\label{condiMP}
Let \(P\) solve \(\mathrm{MP}(g,u_0)\), let \(r<T\) be deterministic,
and let \(P_\omega^r\) be a regular conditional law of
\((X_{r+t})_{0\le t\le T-r}\) given the canonical
\(\mathcal F_r^X\).  Then, for \(P\)-almost every \(\omega\),

\begin{equation}
P_\omega^r\in\mathrm{MP}(g,X_r(\omega)).
\label{8.13}
\end{equation}

\end{lemma}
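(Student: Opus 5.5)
The plan is to prove the conditional martingale property directly from the definition in \eqref{8.1}--\eqref{8.2}. Concretely, I will show that for $P$-a.e.\ $\omega$ the shifted process $Y_t:=X_{r+t}$ satisfies, under $P^r_\omega$, that $Y_0=X_r(\omega)$ and that every $M^{\phi}_{r+\cdot}-M^\phi_r$ (and the corresponding product/bracket process) is a martingale. The initial condition is immediate: $X_r$ is $\mathcal F^X_r$-measurable, so $P^r_\omega(Y_0=X_r(\omega))=1$ for a.e.\ $\omega$. For the dynamics, fix $\phi,\psi\in\mathcal C$ and set
\[
N^{\phi}_t:=\langle Y_t,\phi\rangle-\langle Y_0,\phi\rangle-\int_0^t\langle Y_u,\Delta\phi\rangle\,du ,
\]
which equals $M^\phi_{r+t}-M^\phi_r$. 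I will also set $L^{\phi,\psi}_t:=N^\phi_tN^\psi_t-\int_0^t\int_{\mathbb T}g(Y_u(y))^2\phi\psi\,dy\,du$.

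\textbf{Step 1 (a countable determining family).} Since $P^r_\omega$ is only defined up to null sets, I need a countable set of conditions. I will take $\phi,\psi$ ranging over the countable basis $\{e_k\}$, which suffices because the conditions in \eqref{8.1}--\eqref{8.2} are bilinear, so they extend from basis elements to all of $\mathcal C$. I will also take rational times $0\le t_1<t_2\le T-r$, and bounded continuous functions $G$ of finitely many rational-time evaluations $\langle Y_{u_i},e_{k_i}\rangle$ with $u_i\le t_1$, drawn from a countable dense class. The martingale property under $P^r_\omega$ will then reduce to
\[
E^{P^r_\omega}\bigl[(N^\phi_{t_2}-N^\phi_{t_1})G\bigr]=0
\quad\text{and}\quad
E^{P^r_\omega}\bigl[(L^{\phi,\psi}_{t_2}-L^{\phi,\psi}_{t_1})G\bigr]=0
\]
for this countable family. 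Passing from rational to all times uses continuity of paths together with uniform integrability.

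\textbf{Step 2 (verification under $P$).} Take any bounded $\mathcal F_r^X$-measurable $F$. Then $F\cdot G(Y)$ is bounded and $\mathcal F^X_{r+t_1}$-measurable, so the $P$-martingale property of $M^\phi$ gives $E^P[F\,G\,(M^\phi_{r+t_2}-M^\phi_{r+t_1})]=0$. For the bracket, I write $N^\phi_tN^\psi_t$ in terms of $M^\phi_{r+t}M^\psi_{r+t}$ and cross terms involving $M^\phi_r$ and $M^\psi_r$. Since $M_r$ is $\mathcal F_r$-measurable and $M^\phi M^\psi-[M^\phi,M^\psi]$ is a $P$-martingale, the corresponding identity for $L^{\phi,\psi}$ follows. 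Disintegrating with respect to $P^r_\omega$ then gives $E^P\bigl[F\,E^{P^r_\omega}[\cdots]\bigr]=0$ for all such $F$, hence $E^{P^r_\omega}[\cdots]=0$ for $P$-a.e.\ $\omega$. A countable union of null sets is still null, so these identities hold simultaneously for the whole family off one null set.

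\textbf{Main obstacle (integrability).} The quantities $N^\phi$ and $L^{\phi,\psi}$ are unbounded, so I must check that they are $P$-integrable and that their conditional versions are $P^r_\omega$-integrable for a.e.\ $\omega$. The martingale identities must also hold as genuine martingales and not merely local ones. I would first try the moment bound \eqref{usigfatou} / Lemma \ref{lemma4.237}, which gives $E^P\sup_{t\le T}\|X_t\|_H^2<\infty$, together with linear growth of $g$. These yield $E^P\sup_t\bigl(|N^\phi_t|^2+|L^{\phi,\psi}_t|\bigr)<\infty$. Consequently $E^{P^r_\omega}\sup_t\|Y_t\|_H^2<\infty$ for a.e.\ $\omega$, which is enough for both the disintegration and the time-continuity argument. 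If only local martingales were available, I would instead localize with the stopping times $\tau_R=\inf\{t:\|Y_t\|_H\ge R\}$, which are countably many and measurable on path space, and run the same countable argument for the stopped processes.
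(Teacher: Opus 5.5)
Your proposal is correct and essentially matches the paper: the paper only cites Stroock--Varadhan's Theorem 1.2.10, and your argument is the standard proof behind that theorem (reduce to countably many test directions, rational times and cylinder functionals, disintegrate against the regular conditional law, then take a countable union of null sets). The one problem-specific input is the integrability of $N^\phi$ and $L^{\phi,\psi}$, which you correctly obtain from the moment bound \eqref{usigfatou}, available for every solution of $\mathrm{MP}(g,u_0)$ via Lemma~\ref{equivalence2}, together with the linear growth of $g$.
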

This is the standard stability of martingale problems under conditioning; see
\cite[Theorem 1.2.10]{stroock2007multidimensional}.

\begin{lemma}[From endpoint uniqueness to path uniqueness]\label{omittedfinallemma}
Under the assumptions of $g$ in Theorem \ref{theorem1.11},
every solution of \(\mathrm{MP}(g,u_0)\) has the same law on
\(C([0,T];H)\).

\end{lemma}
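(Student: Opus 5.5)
We follow the classical Stroock--Varadhan route: one-dimensional marginal uniqueness together with conditioning gives uniqueness of all finite-dimensional distributions. On $C([0,T];H)$ the Borel $\sigma$-field is generated by the evaluation maps $X_t$, so agreement of finite-dimensional laws then gives equality of path laws.

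Fix two solutions $P,Q$ of $\mathrm{MP}(g,u_0)$. We will show by induction on $k$ that for all $0\le t_1<\dots<t_k\le T$ and bounded Borel $\varphi_1,\dots,\varphi_k$ on $H$,
\[
E^P\Bigl[\prod_{i=1}^k\varphi_i(X_{t_i})\Bigr]=E^Q\Bigl[\prod_{i=1}^k\varphi_i(X_{t_i})\Bigr],
\]
and that both sides equal the iterated kernel expression
\[
\int P_{t_1}(u_0,dx_1)\varphi_1(x_1)\int P_{t_2-t_1}(x_1,dx_2)\varphi_2(x_2)\cdots .
\]

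The case $k=1$ is the endpoint uniqueness already established. The convergence \eqref{7.14}--\eqref{7.15} shows that $\mathrm{Law}_P(X_t)$ equals the kernel $P_t(u_0,\cdot)$ after pulling back through $\iota^{-1}$. This is legitimate because a probability measure on $H$ is determined by its pushforward under the injective Borel map $\iota$, since $\iota(H)$ is Borel and $\iota^{-1}$ is Borel by Lusin--Souslin.

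For the inductive step, set $r=t_{k-1}$ and condition on $\mathcal F^X_{r}$. By Lemma~\ref{condiMP}, for $P$-a.e.\ $\omega$ the regular conditional law $P^r_\omega$ solves $\mathrm{MP}(g,X_r(\omega))$. By the $k=1$ case applied from the initial point $X_r(\omega)$ (the construction of Sections~\ref{sections2}--\ref{sections3} works for every deterministic $u_0\in H$), its time-$(t_k-r)$ marginal is $P_{t_k-r}(X_r(\omega),\cdot)$. Hence
\[
E^P\bigl[\varphi_k(X_{t_k})\mid\mathcal F^X_r\bigr]=(P_{t_k-r}\varphi_k)(X_r)\qquad P\text{-a.s.}
\]
Here $x\mapsto P_{t}\varphi(x)$ is Borel, by the Borel measurability of the kernel established just before Lemma~\ref{condiMP}. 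Substituting reduces the $k$-point expectation to a $(k-1)$-point expectation with the new bounded Borel function $\varphi_{k-1}\cdot P_{t_k-r}\varphi_k$. The same reduction holds under $Q$, so the induction hypothesis closes the step.

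The finite-dimensional distributions of $P$ and $Q$ therefore coincide. Since the cylinder sets form a $\pi$-system generating the Borel $\sigma$-field of the Polish space $C([0,T];H)$, we conclude $P=Q$. By Lemma~\ref{equivalence2} this is the claimed weak uniqueness.

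The main obstacle is measurability: making sure $x\mapsto P_t(x,\cdot)$ is a genuine Borel kernel on $H$, so that $(P_{t}\varphi)(X_r)$ is a random variable and the disintegration is valid. The plan is to obtain it as a $W_{d_s}$-limit of the Borel maps $x\mapsto\iota_\#\Pi^n_t(x,\cdot)$. A pointwise limit of Borel maps into the Polish space $\mathcal P(H^{-s})$ is Borel, and we then pull back via $\iota^{-1}$ as described in the text. A secondary point is that Lemma~\ref{condiMP} is stated in the finite-dimensional reference. Its proof uses only a countable determining family of test functions, here $\mathcal C$, and a regular conditional probability on a Polish space, so it transfers verbatim.
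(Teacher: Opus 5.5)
Your proposal is correct and follows essentially the same route as the paper. Both arguments apply the conditional martingale problem and then use endpoint uniqueness from the random starting point $X_r$ to get a common Markov transition kernel; iterating this identifies the finite-dimensional distributions, which determine the law on $C([0,T];H)$. Your explicit induction, and your justification that $x\mapsto P_t(x,\cdot)$ is Borel as a pointwise $W_{d_s}$-limit of Borel maps, spell out steps the paper states more briefly.
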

\begin{proof}

By Lemma \ref{condiMP} and \eqref{8.12}, for every bounded continuous function
\(\varphi:H\to\mathbb R\) and \(0\le r<t\le T\), and for every solution $P\in\operatorname{MP}(g,u_0),$
\begin{equation}
\mathbb E_P[\varphi(X_t)\mid\mathcal F_r^X]
=P_{r,t}\varphi(X_r),
\label{8.16}
\end{equation}
while the right hand side does not depend on the solution $P$.
Since the space of bounded continuous functions is convergence-determining and we can find a countable convergence-determining family, this implies that 
$$
\operatorname{Law}(X_t\mid\mathcal{F}_r^X)=P_{r,t}(X_r,\cdot).
$$
Thus every solution $P$ to $
\operatorname{MP}(g,u_0)$ is Markov
with the same transition kernels.  Iteration gives, for
\(0<t_1<\cdots<t_k\le T\),
\begin{equation}
\operatorname{Law}(X_{t_1},\ldots,X_{t_k})
=\delta_{u_0}P_{0,t_1}P_{t_1,t_2}\cdots P_{t_{k-1},t_k}.
\label{8.17}
\end{equation}
All finite-dimensional distributions are therefore common.  The Borel
sigma-field of \(C([0,T];H)\) is generated by evaluations at rational
times, so the full canonical path laws coincide for any $P$.\end{proof}

Lemma \ref{omittedfinallemma} completes the proof of Theorem \ref{theorem1.11}.

\section{Proof of the two extensions}
In this section we prove the two extensions of Theorem \ref{theorem1.11}.

\subsection{The vector valued case}We prove Theorem \ref{thm:vector-valued}. We first fix a constant \(m[G]\in(0,c_G/4)\)
and define
$$
    R(z):=
    \bigl(G(z)G(z)^{*}-m[G]^2I_d\bigr)^{1/2}.
$$
\begin{lemma}
\label{holderlemmas} Let $G$ satisfy the assumptions in Theorem  \ref{thm:vector-valued}. Then we can find a constant \(L_G<\infty\) depending only on $G$ such that,
$$ 
    \|R(z)-R(z')\|_{\mathrm{HS}}
       \leq L_G|z-z'|^\beta,
    \qquad \forall z,z'\in\mathbb R^d.
$$
\end{lemma}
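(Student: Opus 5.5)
We need to show that the matrix square root $R(z)=(G(z)G(z)^*-m[G]^2I_d)^{1/2}$ is $\beta$-Hölder in Hilbert--Schmidt norm. The plan has three steps: a Hölder-type bound for $\Sigma(z):=G(z)G(z)^*$, uniform ellipticity of $\Sigma(z)-m[G]^2I_d$, and a Lipschitz bound for the matrix square root on uniformly positive definite matrices, which is the matrix analogue of the scalar computation \eqref{2.2a}.

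\textbf{Step 1: a bound for $\Sigma$.} Write
\[
\Sigma(z)-\Sigma(z')=(G(z)-G(z'))G(z)^*+G(z')(G(z)-G(z'))^*,
\]
so that
\[
\|\Sigma(z)-\Sigma(z')\|_{\mathrm{HS}}\le [G]_{C^\beta}|z-z'|^\beta\bigl(\|G(z)\|_{\mathrm{op}}+\|G(z')\|_{\mathrm{op}}\bigr).
\]
Since $G$ has at most linear growth, this factor is not uniformly bounded. So, as in \eqref{2.2a}, we should not try to bound $\Sigma$ uniformly. Instead we keep this factor and cancel it against a matching lower bound on $R(z)+R(z')$ in Step 3.

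\textbf{Step 2: uniform ellipticity.} Set $M:=\Sigma-m[G]^2I_d$. Then
\[
M\ge (c_G^2-m[G]^2)I_d\ge \tfrac{15}{16}c_G^2 I_d,
\]
so $R(z)\ge \tfrac{\sqrt{15}}{4}c_G I_d$. Moreover $\Sigma\le R^2+m[G]^2I_d\le C R^2$, which gives $\|G(z)\|_{\mathrm{op}}\le C\lambda_{\min}$-comparable bound of the form
\[
\|G(z)\|_{\mathrm{op}}\le C\,\|R(z)\|_{\mathrm{op}},
\]
with $C$ depending only on $c_G$.

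\textbf{Step 3: the square-root estimate.} This is the main step. Put $X=R(z)$ and $Y=R(z')$, both symmetric positive definite. They satisfy
\[
X(X-Y)+(X-Y)Y=X^2-Y^2=M(z)-M(z')=\Sigma(z)-\Sigma(z').
\]
Thus $\Delta:=X-Y$ solves the Sylvester equation $X\Delta+\Delta Y=E$ with $E:=\Sigma(z)-\Sigma(z')$. Diagonalize $X=\sum_i\lambda_i u_iu_i^*$ and $Y=\sum_j\mu_j v_jv_j^*$. In these bases the entries satisfy $\Delta_{ij}=E_{ij}/(\lambda_i+\mu_j)$, and therefore
\[
\|X-Y\|_{\mathrm{HS}}\le \frac{\|E\|_{\mathrm{HS}}}{\min_{i,j}(\lambda_i+\mu_j)}.
\]
Combined with Step 1, this yields the factor
\[
\frac{\|G(z)\|_{\mathrm{op}}+\|G(z')\|_{\mathrm{op}}}{\lambda_{\min}(X)+\lambda_{\min}(Y)}.
\]

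\textbf{The obstacle.} The scalar argument in \eqref{2.2a} works because $h$ and $g$ are comparable. In the matrix setting the denominator involves only the smallest eigenvalues, while the numerator involves operator norms, and these are not comparable when $G$ is anisotropic with large growth. I see two ways to handle this.

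The first is to work with $G-G(z')$ directly. Split $E=(G(z)-G(z'))G(z)^*+G(z')(G(z)-G(z'))^*$ and treat each piece with a weighted Sylvester bound. For the term $\Delta$ driven by $AG(z)^*$, where $A:=G(z)-G(z')$, use $\|G(z)^*\|\lesssim\|X\|$ together with $\lambda_i/(\lambda_i+\mu_j)\le1$, writing $G(z)^*=O X$ with $\|O\|_{\mathrm{op}}\le C$ via a polar-type decomposition from $GG^*\le CX^2$. Then the entries satisfy
\[
|(AOX)_{ij}|/(\lambda_i+\mu_j)\le |(AO)_{ij}|\,\lambda_i/(\lambda_i+\mu_j),
\]
after reordering the factors so that $X$ sits on the side matching $\lambda_i$.

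The second, as a fallback, is to use Hölder continuity of the map $M\mapsto M^{1/2}$: $\|M^{1/2}-N^{1/2}\|_{\mathrm{HS}}\le C_d\|M-N\|_{\mathrm{HS}}^{1/2}$. I would also check whether the paper only needs the bound with $L_G$ depending on growth, via linear growth of $G$, when $|z-z'|\le1$ locally.

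I expect the rigorous treatment of this ordering issue in the first approach to be the main obstacle.
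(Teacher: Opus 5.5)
Your Steps 1--3 are correct as far as they go, but the decisive estimate is left open, and as written it fails.

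\textbf{Where the first approach breaks.} With your splitting $E=(G(z)-G(z'))G(z)^*+G(z')(G(z)-G(z'))^*$, the large factor $G(z)^*=OX$ sits on the right. There the Sylvester entries are taken against the eigenvectors $v_j$ of $Y=R(z')$. So $u_i^*AOXv_j$ is not $\lambda_i\,u_i^*AOv_j$, and your displayed entrywise inequality is false. Symmetrically, $G(z')$ sits on the left, against the eigenvectors $u_i$ of $X=R(z)$. Solving the Sylvester equation for this $E$ therefore only gives $\|E\|_{\mathrm{HS}}/(\lambda_{\min}(X)+\lambda_{\min}(Y))$, which is exactly the growth obstacle you identified.

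\textbf{The fallback does not prove the lemma.} The $\tfrac12$-Hölder continuity of $M\mapsto M^{1/2}$ combined with Step 1 yields only
\[
\|R(z)-R(z')\|_{\mathrm{HS}}\lesssim(\|G(z)\|_{\mathrm{op}}+\|G(z')\|_{\mathrm{op}})^{1/2}|z-z'|^{\beta/2}.
\]
This loses half the exponent and the uniformity of $L_G$.

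\textbf{The missing step.} Split the other way; the two splittings differ by $\pm AA^*$. With $A:=G(z)-G(z')$,
\[
\Sigma(z)-\Sigma(z')=G(z)A^*+AG(z')^*.
\]
Since $GG^*=R^2+m[G]^2I_d\le CR^2$ with $C=c_G^2/(c_G^2-m[G]^2)$, the matrices $O:=G(z)^*X^{-1}$ and $O':=G(z')^*Y^{-1}$ have operator norm at most $\sqrt C$. Moreover $G(z)=XO^*$ and $G(z')^*=O'Y$. Now each large factor sits on the side of its own eigenbasis, and the Sylvester equation gives
\[
u_i^*(X-Y)v_j=\frac{\lambda_i}{\lambda_i+\mu_j}\,u_i^*O^*A^*v_j+\frac{\mu_j}{\lambda_i+\mu_j}\,u_i^*AO'v_j .
\]
Both weights lie in $[0,1]$, and the Hilbert--Schmidt norm is unitarily invariant. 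Hence
\[
\|R(z)-R(z')\|_{\mathrm{HS}}\le 2\sqrt C\,\|A\|_{\mathrm{HS}}\le 2\sqrt C\,[G]_{C^\beta}|z-z'|^\beta,
\]
with no growth factor.

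\textbf{Comparison with the paper.} Completed this way, your argument is a self-contained alternative to the paper's proof, which passes through $P(z)=(G(z)G(z)^*)^{1/2}$. The Araki--Yamagami inequality gives $\|P(z)-P(z')\|_{\mathrm{HS}}\le\sqrt2\|G(z)-G(z')\|_{\mathrm{HS}}$; this is where the growth disappears. Then $R=f_G(P)$ with $f_G(r)=\sqrt{r^2-m[G]^2}$ Lipschitz on $[c_G,\infty)$, and Kittaneh's Hilbert--Schmidt functional-calculus bound for normal operators finishes. The paper's route is shorter given the two cited inequalities. Yours essentially reproves both in a single Sylvester computation.
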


\begin{proof}
[\proofname\ of Lemma \ref{holderlemmas}]    
Set \(P(z)=(G(z)G(z)^*)^{1/2}\). By the
Araki–Yamagami inequality \cite{araki1981inequality} for the matrix absolute-value map,
\begin{equation}
    \|P(z)-P(z')\|_{\mathrm{HS}}
       \leq \sqrt{2}\,
       \|G(z)-G(z')\|_{\mathrm{HS}}.\label{whatismoreover}
\end{equation}
Moreover, \(R(z)=f_G(P(z))\), where
\[
    f_G(r)=\sqrt{r^2-m[G]^2},\qquad r\geq c_G.
\]
Since $f_G$ is Lipschitz continuous on $[c_G,\infty)$ with Lipschitz constant $\frac{c_G}{\sqrt{c_G^2-m[G]^2}}$ and $P(z),P(z')$ are normal operators, the Hilbert--Schmidt functional-calculus
estimate for normal operators (see \cite[Corollary~2]{Kittaneh1985}) applies and we get 
\[
    \|R(z)-R(z')\|_{\mathrm{HS}}
       \leq
    \frac{c_G}{\sqrt{c_G^2-m[G]^2}}
    \|P(z)-P(z')\|_{\mathrm{HS}}.
\]Combined with \eqref{whatismoreover} and the assumption on $G$,
the conclusion follows.
\end{proof}

\begin{proof}[\proofname\ of Theorem \ref{thm:vector-valued}]
For the vector field case, we denote by, for any $\alpha\in \mathbb{R}$,
$$
    \mathcal H=L^2(\mathbb T;\mathbb R^d),
    \qquad
    \mathcal H^\alpha=H^\alpha(\mathbb T;\mathbb R^d),
$$
with all Sobolev norms understood componentwise. We indicate
only the changes from the scalar proof. Let
\[
P(z)=\bigl(G(z)G(z)^*\bigr)^{1/2}.
\]
Since \(G(z)G(z)^*\geq c_G^2I_d\), the matrix
\[
    O(z)=P(z)^{-1}G(z)
\]
is orthogonal. This orthogonal transform of the white noise does not change the law, so it reduces the original equation to one with diffusion
coefficient \(P(z)\) and white noise $d\widetilde W=O(\cdot)dW$. We henceforth assume that the SHE has diffusion coefficient $P(z)$ which is a self-adjoint matrix.

Fix an arbitrary weak solution \((U,W)\).
Let $W'$ be another independent white noise. Since by definition,
$P(z)^2=m[G]^2I_d+R(z)^2,
$
the noise may further be represented by two independent
\(d\)-dimensional space--time white noises \(W^0,W^1\) via the following decomposition
$$
dW^0=m[G]P(U)^{-1}d\widetilde W+R(U)P(U)^{-1}dW',
$$

$$
dW^1=R(U)P(U)^{-1}d\widetilde W-m[G]P(U)^{-1}dW'.
$$

We can then verify that they are independent and
\begin{equation}
  G(U)\,d W
       \ {=}   P(U)\,d\widetilde W
       \ {=}\
    m[G]\,dW^0+R(U)\,dW^1.
    \label{V.1}
\end{equation}
By Lemma \ref{holderlemmas}, $R(z)$ is again uniformly $\beta$-Hölder continuous.

Choose smooth globally Lipschitz approximating maps 
$R_n:\mathbb R^d\to\mathbb R^{d\times d}
$
such that
\[
    \varepsilon_n:=\sup_{z\in\mathbb R^d}
       \|R_n(z)-R(z)\|_{\mathrm{HS}}\longrightarrow0.
\]We choose \(R_n\) by convolution with a nonnegative mollifier, so that
\(R_n\) remains symmetric. Let
\[
P_n(z)=\bigl(m[G]^2I_d+R_n(z)^2\bigr)^{1/2}.
\]
The same Hilbert--Schmidt functional-calculus estimate shows that
\(P_n\) is globally Lipschitz and uniformly elliptic. 
The reflection coupling is then defined exactly as in the
scalar proof, with the reflection operators acting on the
Hilbert space \(\mathcal H\). Namely, for
\(D=U-V^n\), set
\[
    e(D)=\frac{D}{\|D\|_{\mathcal H}},
    \qquad
    P_D=e(D)\otimes e(D),
\]
and use the same operators \(K_\delta(D)\) and
\(S_\delta(D)\) as in \eqref{3.5}. The equation for \(V^n\) is
\begin{equation}
\begin{split}
    dV^n={}&\Delta V^n\,dt
       +m[G]K_\delta(D)\,dW^0
       +m[G]S_\delta(D)\,d\overline W^0  \\
       &\quad +M_{R_n(V^n)}\,dW^1,
\end{split}
\label{V.2}
\end{equation}
where \(M_F\) denotes pointwise multiplication by the
matrix-valued function \(F\). By the same
bracket calculation as in Lemma~\ref{preservations}, \(V^n\) has the
canonical law of the \(P_n\)-equation, independently of \(U\).

All deterministic Sobolev interpolation estimates remain
unchanged. The trace inequality is now written more generally as, in place of \eqref{4.4},
\begin{equation}
    \bigl\|A^{-s/2}M_F
       \bigr\|_{\mathcal L_2(\mathcal H)}^2
      =
    \kappa_s\int_{\mathbb T}
        \|F(x)\|_{\mathrm{HS}(\mathbb R^d)}^2\,dx,
    \qquad s>\frac12.
    \label{V.3}
\end{equation}
Consequently, the Hölder assumption on $G$ gives, in place of Lemma \ref{lemmas4.111},
\begin{equation}
\begin{split}
 &\bigl\|A^{-s/2}
       M_{R(U)-R_n(V^n)}
   \bigr\|_{\mathcal L_2(\mathcal H)}^2       \\
 &\qquad\leq
   C\int_{\mathbb T}|U(x)-V^n(x)|^{2\beta}\,dx
     +C\varepsilon_n^2 \leq
   C\|D\|_{\mathcal H}^{2\beta}
     +C\varepsilon_n^2.
\end{split}
\label{V.4}
\end{equation}
The interpolation inequality
\begin{equation}
    \|D\|_{\mathcal H}^2
       \leq
    \|D\|_{\mathcal H^{-s}}^{2(1-s)}
    \|D\|_{\mathcal H^{1-s}}^{2s}
    \label{V.5}
\end{equation}
is the same as the scalar version \eqref{4.3}.

Equations \eqref{V.3}--\eqref{V.5} are precisely the
inequalities used in the variational Itô formula estimate. All the later estimates are unchanged after replacing
\(L^2(\mathbb T)\) by \(\mathcal H\).  In particular, the
closing condition remains
$ \beta>\frac23.$ The rest of the arguments are identical to the scalar proof and yield equality in
law of any weak solution.
\end{proof}

\subsection{Extension to a globally Lipschitz drift}
\begin{proof}[\proofname\ of Corollary \ref{cor:lipschitz-drift}]
We use the same coupling, adding \(b(U(t))\) and
\(b(V^n(t))\) to the two coupled equations. The standard energy
estimates remain valid because \(b\) has linear growth. In the Itô computation involving $f_\delta(r_s(t))$, in the inner region (Lemma \ref{lemma5.1}) its only additional contribution satisfies (denote by $\rho_\delta=(r_s^2+\delta^2)^{1/2}$ and $w_\delta=p\rho_\delta^{p-2}$),
\[
\begin{aligned}
&\rho_\delta^{p-2}
 \bigl|
 \langle D,b(U)-b(V^n)\rangle_{H^{-s}}
 \bigr|                                                \\
&\qquad\leq
 L_b\rho_\delta^{p-2}\|D\|_{H^{-s}}\|D\|_2             \\
&\qquad\leq
 L_b\rho_\delta^{p-s}\|D\|_{H^{1-s}}^s    \qquad\text{(using interpolation and $\|D\|_{H^{-s}}\leq\rho_\delta$)}             \\
&\qquad\leq
 \varepsilon\rho_\delta^{p-2}\|D\|_{H^{1-s}}^2
 +C_{\varepsilon,L_b}\rho_\delta^p.
\end{aligned}
\]
The first term is absorbed by the dissipative Laplacian term and the
second by the existing Gronwall estimate. For the inner region we use the same estimate \eqref{5.11}. For the outer region (Lemma \ref{lemma6.1}), the contribution of $
 \langle D,b(U)-b(V^n)\rangle_{H^{-s}}
  $ is similarly absorbed in the negative energy term via
  $$
w_\delta|\langle D,b(U)-b(V^n)\rangle_{H^{-s}}|\lesssim r_s^py_s^{s/2}\lesssim \eta r_s^py_s+C_\eta r_s^p
  $$
  and does not change the later computations. The remaining preservation, stopping, endpoint uniqueness and martingale problem proofs remain unchanged.
\end{proof}

\section*{Funding}
The majority of this work was completed while the author was affiliated with IAS Princeton, during which the author was supported by a fellowship from IAS provided by the S.S. Chern Foundation for Mathematical Research Fund and the Fund for Mathematics. 

\section*{Declaration of generative AI usage}

The author used OpenAI's ChatGPT as an assistance tool for language
editing and exploratory checking of some calculations. All mathematical
arguments were independently verified by the author, who takes full
responsibility for the contents of the paper.

\printbibliography

\end{document}